\documentclass[a4paper,10pt]{amsart}

\usepackage{graphicx}
\usepackage[dvips]{color}
\usepackage[all]{xy}
\usepackage[english]{babel}

\usepackage{amsmath,amssymb}

\usepackage{amsthm}

\usepackage{amscd}

\usepackage{mathrsfs}
\usepackage{epsfig}

\usepackage{stmaryrd}

\DeclareMathOperator{\Pic}{Pic}

\DeclareMathOperator{\spec}{Spec}

\DeclareMathOperator{\Hilb}{Hilb}

\begin{document}

 \title[Compactified Picard stacks over $\overline{\mathcal M}_{g,n}$]{Compactified Picard stacks over the moduli stack of stable curves with marked points}
\author{Margarida Melo}\thanks{Partially supported by a Funda\c{c}\~ao Calouste
Gulbenkian fellowship.}

\newcommand{\sm}{\setminus}
\newcommand{\av}{``}
\newcommand{\uv}{''}
\newcommand{\forn}{\forall n\in\mathbb{N}}
\newcommand{\raw}{\to}
\newcommand{\Raw}{\Rightarrow}
\newcommand{\law}{\gets}
\newcommand{\Law}{\Leftarrow}
\newcommand{\Lra}{\Leftrightarrow}
\newcommand{\C}{\mathcal C}
\newcommand{\K}{\mathcal K}
\newcommand{\A}{\mathcal A}
\newcommand{\N}{\mathbb N}
\newcommand{\X}{\mathcal X}
\newcommand{\M}{\mathcal M}
\newcommand{\G}{\mathcal G}
\newcommand{\Z}{\mathbb Z}
\renewcommand{\P}{\mathcal P}
\renewcommand{\o}{\overline}

\def\hifen{\discretionary{-}{-}{-}}

\newtheorem{teo}{Theorem}[section]
\newtheorem{prop}[teo]{Proposition}
\newtheorem{lem}[teo]{Lemma}
\newtheorem{rem}[teo]{Remark}
\newtheorem{defi}[teo]{Definition}
\newtheorem{ex}[teo]{Example}
\newtheorem{exs}[teo]{Examples}
\newtheorem{cor}[teo]{Corollary}

\begin{abstract}
In this paper we give a construction of algebraic (Artin) stacks $\o\P_{d,g,n}$ endowed with a modular map onto the moduli stack of pointed stable curves $\o{\mathcal M}_{g,n}$, for $g\geq 3$. The stacks $\o\P_{d,g,n}$ are smooth, irreducible and have dimension $4g-3+n$. They yield a geometrically meaningful compactification of the degree $d$ universal Picard stack over $\mathcal M_{g,n}$, parametrizing $n$-pointed smooth curves together with a degree $d$ line bundle.
\end{abstract}

\maketitle

\let\languagename\relax

\section{Introduction}
Consider the degree $d$ universal Picard stack over $n$-pointed smooth curves of genus $g$, $\mathcal Pic_{d,g,n}$, parametrizing families of $n$-pointed smooth curves of genus $g$ endowed with line bundles of relative degree $d$ over these curves.

$\mathcal Pic_{d,g,n}$ is, of course, not complete.
In the present paper we search for a compactification of $\mathcal Pic_{d,g,n}$ over $\o{\mathcal M}_{g,n}$, the moduli stack of stable curves of genus $g$. By this we mean an algebraic stack with a proper (or at least universally closed) map onto $\o{\mathcal M}_{g,n}$, containing $\mathcal Pic_{d,g,n}$ as a dense open substack

The problem of compactifying the (generalized) jacobian of curves or of families of curves has been widely studied in the last decades, starting from the work of Igusa \cite{igusa} and of Mayer and Mumford \cite{meyermumf}. The first solution to this problem, in the case on irreducible curves, was due to D'Souza in \cite{dsouza} and since then, several other solutions where found, the more general one being probably Simpson's construction of moduli spaces of coherent sheaves on
projective schemes in \cite{simpson} (see \cite{alex} for an overview and comparison results on these constructions). 

On the other hand, the construction of the moduli space of stable curves with marked points was done by Knudsen in \cite{knudsen}, following ideas of Mumford, with the scope of proving the projectivity of the moduli space of stable curves.
Since then, $\o\M_{g,n}$ itself became the subject of great interest, because of its rich geometry, and because of several applications. In particular, $\o\M_{g,n}$ has a central role in Gromov-Witten theory and enumerative geometry.
In fact, in part motivated by Witten's conjecture (\cite{witten}), the study of the cohomology ring of $\o{\M}_{g,n}$ attracted the attention of several mathematicians in the last decades and led to very important results. 
We recall, for instance, Kontsevich's first proof of the Witten conjecture in (\cite{konts}); the interaction between geometry and Physics leading to the development of quantum cohomoly and Gromov-Witten theory (see e.g. \cite{fulton}); the algebro-geometric inductive calculations on the cohomology ring of $\o\M_{g,n}$ due to Arbarello and Cornalba in \cite{arbcorn}; Faber's conjectures on the structure of the tautological ring of $M_g$ and its pointed versions (\cite{faber}, \cite{panda2}), the ELSV formulas relating intersection formulas in $\o{\mathcal M}_{g,n}$ with Hurwitz numbers (\cite{ELSV1}, \cite{ELSV2}) and the recent proof by Faber, Shadrin and Zvonkine in \cite{genwitten} of the generalized Witten conjecture (\cite{witten2}).

So, it is natural to search for a compactification of $\mathcal Pic_{d,g,n}$ over $\o{\mathcal M}_{g,n}$ and to study its applications. Nevertheless, at least to our knowledge, there was no construction of compactified Picard varieties for curves with marked points until now.

The aim of the present paper is to construct an algebraic (Artin) stack $\o\P_{d,g,n}$ with a geometrically meaningful modular description, giving a solution for the above problem for $g\geq 3$.

Let $n=0$ and consider Pic$_g^d$, the \av universal Picard variety of degree $d$\uv over $ M_g^0$ parametrizing isomorphism classes of line bundles of degree $d$ over automorphism-free nonsingular curves. The problem of compactifying $\Pic_g^d$ over $\o M_g$ was addressed by Caporaso in \cite{cap} and later by Pandharipande in \cite{panda} (the second author's construction holds also for bundles of higher rank). Both compactifications were done by means of GIT-quotients yielding projective varieties endowed with a proper map onto $\o M_g$, extending the natural map $\Pic_g^d\to M_g^0$. Even if the boundary objects used by the two authors have different modular interpretations, it turns out that the resulting varieties coincide. 

We here focus on Caporaso's original compactification, which works for $g\geq 3$; denote it by $\o P_{d,g}$ and by $\phi_d$ its natural map onto $\o M_g$. Given $[X]\in \o M_g$, $\phi_d^{-1}(X)$ is a projective connected scheme with a finite number of components (that cannot exceed a certain numerical invariant of the curve) 
and, if $X$ has trivial automorphism group, $\phi_d^{-1}(X)$ is reduced and its smooth locus is isomorphic to the disjoint union of a finite
number of copies of the jacobian of $X$, $J_X$. Later, on \cite{capneron}, the same author gives a stack-theoretical description of the above quotient for degrees $d$ such that $(d-g+1,2g-2)=1$, getting a modular compactification of $\mathcal Pic_{d,g,0}$ over $\o{\mathcal M}_g$. We considered the same problem in \cite{melo} with no assumption on $d$, getting an algebraic (Artin) stack $\o{\G}_{d,g}$ with a universally closed morphism $\Psi_{d,g}$ onto $\o{\mathcal M}_g$ parametrizing families of \textit{quasistable} curves endowed with \textit{balanced} line bundles of relative degree $d$. Our stack $\o\P_{d,g,n}$ coincides with $\o{\G}_{d,g}$ for $n=0$.

Let $d\in \Z$, $n\geq 0$ and $g\geq 3$. We define $\o\P_{d,g,n}$ to be the stack whose sections over a scheme $S$ are given by families of genus $g$ $n$-pointed quasistable curves over $S$ endowed with a relative degree $d$ balanced line bundle (see Definitions \ref{quasistabledef} and \ref{balanced} below). Morphisms between two such sections are given by cartesian diagrams of the families compatible with the sections plus an isomorphism between the line bundles (see Definition \ref{compstack}). The principal result of this paper consists in proving that $\o\P_{d,g,n}$ is a smooth and irreducible algebraic (Artin) stack of dimension $4g-3+n$, endowed with a universally closed morphism onto $\o{\mathcal M}_{g,n}$.

Our interest in constructing such a space is also due to Goulden, Jackson and Vakil's \av generalized ELSV formula\uv conjecturing a relation between the intersection theory of a $(4g-3+n)$-dimensional space and certain double Hurwitz numbers (see \cite{vakil} and \cite{vakil2}). According to these authors, this space should be a suitable compactification of $\mathcal Pic_{d,g,n}$ over $\o\M_{g,n}$ supporting particular families of classes sa\-tis\-fying certain properties.
Unfortunately, we do not know yet if our space supports such classes, except for what they call $\psi$-classes, which turn out to be the pullback of the $\psi$-classes in $\o\M_{g,n}$. It is certainly interesting to consider this as a future research problem.

Our construction of $\o\P_{d,g,n}$ goes in the following way. For $n=0$, we define $\o\P_{d,g,0}$ to be equal to $\o{\G}_{d,g}$. Then, for $n>0$, along the lines of Knudsen's construction of $\o{\mathcal M}_g$ in \cite{knudsen}, we show that there is an isomorphism between $\o\P_{d,g,n}$ and the universal family over $\o\P_{d,g,n-1}$. This isomorphism is built explicitly and it generalizes Knudsen's notion of \textit{contraction} and \textit{stabilization} of $n$-pointed stable curves, in this more general context of quasistable curves endowed with balanced line bundles.

The stacks $\o\P_{d,g,n}$ can never be Deligne-Mumford since there is an action of $\mathbb G_m$ given by scalar product on the line bundles, leaving the curves and the sections fixed.
Even the rigidification in the sense of \cite{avc} of $\o\P_{d,g,n}$ by this action of $\mathbb G_m$, denoted by $\o{\P}_{d,g,n}\fatslash \mathbb G_m$ is not Deligne-Mumford in general. In fact, already for $n=0$, it was proved by Caporaso in \cite{capneron} that $\o\P_{d,g,0}\fatslash \mathbb G_m$ is Deligne-Mumford if and only if $(d-g+1,2g-2)=1$. This holds in general for any $n\geq 0$.

In order to make the contraction process work, we also need to prove some technical properties for balanced line bundles over quasistable pointed curves. In particular, we get general results concerning global and normal generation for line bundles on (reducible) nodal curves depending only on their multidegree (see section \ref{technical}).

In section \ref{forgetful} we show that $\o\P_{d,g,n}$ is endowed with a (forgetful) morphism $\Psi_{d,g,n}$ onto $\o{\M}_{g,n}$.
We also study the fibres of $\Psi_{d,g,n}$.

Finally, in section \ref{properties}, we study further properties of $\o\P_{d,g,n}$. For example, we show that if $d$ and $d'$ are such that $2g-2$ divides $d-d'$, then $\o\P_{d,g,n}$ is isomorphic to $\o\P_{d',g,n}$. We also study the map form $\o\P_{d,g,n+1}$ to $\o\P_{d,g,n}$ and its sections and we show that these yield Cartier divisors on $\o\P_{d,g,n+1}$ with interesting intersection properties.

We would also like to observe that another possible approach to the construction of $\o{\mathcal P}_{d,g,n}$ would be to use Baldwin and Swinarski's GIT construction of the moduli space of stable maps and, in particular, of $\o M_{g,n}$, in \cite{baldwin}, and then procede as Caporaso in \cite{cap}.

\section{Preliminaries and introduction to the problem}

We will always consider schemes and algebraic stacks locally of finite type over an algebraically closed base field $k$.

A curve $X$ will always be a connected projective curve over $k$ having at most nodes as singularities. 

\subsubsection{Line bundles on reducible curves}

Given a curve $X$, we will denote by $\omega_X$ its canonical or dualizing sheaf.
For each proper subcurve $Z$ of $X$ (which we always assume to be complete and connected), denote by $k_Z:=\sharp (Z\cap \o{X\sm Z})$ and by $g_Z$ its arithmetic genus.
Recall that, since $Z$ is connected, the adjunction formula gives
\begin{equation}\label{omegaZ}
w_Z:=\deg_Z\omega_X=2g_Z-2+k_Z.
\end{equation}

For $L\in \Pic X$ its \textit{multidegree} is $\underline{\deg}L:=(\deg_{Z_1}L,\dots,\deg_{Z_\gamma}L)$ and its (total) degree is $\deg L:=\deg_{Z_1}L+\dots +\deg_{Z_\gamma}L$, where $Z_1,\dots,Z_\gamma$ are the irreducible components of $X$.

Given $\underline{d}=(d_1,\dots, d_\gamma)\in \Z^\gamma$, we set $\Pic^{\underline d}X:=\{L\in\Pic X: \underline{\deg}L=\underline{d}\}$ and $\Pic^dX:=\{L\in\Pic X:\deg L=d\}$. We have that $\Pic^d X=\sum_{|\underline d |=d}\Pic^{\underline d}X$, where $|\underline d|=\sum_{i=1}^\gamma d_i$.

\subsection{Statement of the problem}\label{problem}

Let $\mathcal Pic_{d,g,n}$ be the universal Picard stack over the moduli space of smooth curves with $n$ marked points. Sections of $\mathcal Pic_{d,g,n}$ over a scheme $S$ consist of flat and proper families $\pi:C\rightarrow S$ of smooth curves of genus $g$, with $n$ distinct sections $s_i:S\rightarrow C$ and a line bundle $L$ of relative degree $d$ on $C$. Morphisms between two such objects are given by cartesian diagrams
\begin{equation*}
\xymatrix{
{C} \ar[d]^{\pi} \ar[r]^{\beta_2}  & { C'} \ar[d]_{\pi'} \\
{S} \ar[r]_{\beta_1} \ar @{->} @/^/[u]^{s_i} & {S'} \ar @{->} @/_/[u]_{s_{i'}}
}
\end{equation*}
such that $s_{i'} \circ \beta_1=\beta_2\circ s_i$, $1\le i \le n$, together with an isomorphism $\beta_3:L\rightarrow \beta_2^*(L')$.

In the present paper we search for a compactification of $\mathcal Pic_{d,g,n}$ over $\o{\M}_{g,n}$. By this we mean to construct an algebraic stack $\o{\mathcal P}_{d,g,n}$ with a map $\Psi_{d,g,n}$ onto $\o\M_{g,n}$ with the following properties.
\begin{enumerate}
\item $\o{\mathcal P}_{d,g,n}$ and $\Psi_{d,g,n}$ fit in the following diagram;
\begin{equation}\label{diagram}
\xymatrix{
{\mathcal Pic_{d,g,n}} \ar[d] \ar @{^{(}->}[r]  & {\o{\mathcal P}_{d,g,n}} \ar[d]^{\Psi_{d,g,n}} \\
{\mathcal M_{g,n}}  \ar @{^{(}->}[r] & {\o{\mathcal M}_{g,n}}
}
\end{equation}
\item $\Psi_{d,g,n}$ is proper (or, ate least, universally closed);
\item $\mathcal Pic_{d,g,n}$ has a geometrically meaningful modular description.
\end{enumerate}

Note that in order to complete $\mathcal Pic_{d,g,n}$ over $\o{\mathcal M}_{g,n}$ it is not enough to consider
the stack of line bundles over families of $n$-pointed stable curves, since this is not complete as well.
So, it is necessary to enlarge the category either admitting more general sheaves than line bundles
or a bigger class of curves.

\subsubsection{Strategy of the construction}

Our construction of $\o{\mathcal P}_{d,g,n}$ will go as follows. We will start by noticing that, for $n=0$, this problem is already solved. Then, for $n>0$, we will construct
$\o{\mathcal P}_{d,g,n}$ by induction on the number of marked points $n$, following the lines of Knudsen's construction
of $\o{\M}_{g,n}$.

\subsection{The case $n=0$}\label{n=0}

The problem of constructing compactifications of the Picard varieties of singular curves has been widely studied and, in fact, there are several known compactifications.

We will focus on the one built by Caporaso in \cite{cap}, which we will now briefly describe.

Consider the Hilbert scheme $H$ of genus $g$ curves embedded in $\mathbb P^r$ as nondegenerate curves of degree $d$, where $r=d-g$. There is a natural action of $PGL(r+1)$ in $H$ corresponding to the choice of the coordinates used to embed the curves.
For $d>>0$, define $$\o P_{d,g}:=H_d/PGL(r+1)$$ as the GIT-quotient
of $H_d$, the locus of GIT-semistable points for this action. By results of \cite{gieseker} and \cite{cap}, we know that points in $H_d$ correspond exactly to \textit{quasistable} curves embedded by \textit{balanced} line bundles (of degree $d$), where quasistable curves are semistable curves such that two exceptional components never meet and balanced is a combinatorial condition on the multidegree of the line bundle on the curve (see Definition \ref{balanced} and (\ref{giesekerbasic}) below). 

By construction, $\o P_{d,g}$ has a proper morphism
$\phi_d:\o P_{d,g}\to\o M_g$ such that
$\phi_d^{-1}(M_g^0)$ is isomorphic to the \av universal Picard
variety of degree $d$\uv, $\Pic_g^d$, which parametrizes isomorphism
classes of line bundles of degree $d$ over automorphism-free
nonsingular curves.

Later, in \cite{capneron}, Caporaso shows that, if $d>>0$ and $(d-g+1,2g-2)=1$, the quotient stack associated to the GIT-quotient above, $[H_d/PGL(r+1)]$, is a smooth and irreducible Deligne-Mumford stack endowed with a proper and strongly representable map onto $\o{\M}_g$. Moreover, $[H_d/PGL(r+1)]$ has the following modular description.

Consider the stack $\o{\G}_{d,g}$ (over $SCH_k$) whose sections over a $k-$scheme $S$ consist on families $\pi:X\rightarrow S$ of genus $g$ quasistable curves over $S$ endowed with a balanced line bundle of relative degree $d$ over $X$ and whose morphisms consist on cartesian diagrams of the curves plus an isomorphism between the line bundles (as in $\mathcal Pic_{d,g,n}$ above, ignoring the sections). There is a natural action of $\mathbb G_m$ on $\o\G_{d,g}$ given by fiberwise scalar multiplication on the line bundles. Then $[H_d/PGL(r+1)]$ is the rigidification (in the sense of \cite{avc}) of $\o\G_{d,g}$ by the action of $\mathbb G_m$.

This result uses mainly the fact that the above GIT-quotient is geometric if and only if $(g-g+1,2g-2)=1$.
In \cite{melo} we showed that, for $d>>0$, even if $(d-g+1,2g-2)\neq 1$, $\o\G_{d,g}$ is isomorphic to $[H_d/GL(r+1)]$, where $GL(r+1)$ acts by projection on $PGL(r+1)$. This implies that $\o\G_{d,g}$ is a smooth and irreducible Artin stack endowed with a universally closed map onto $\o\M_g$. Since for $d$ and $d'$ such that $d\pm d'=m(2g-2)$ for some $m\in \Z$, $\o{\G}_{d,g}$ is isomorphic to $\o{\G}_{d',g}$, we get that the same statement holds in general for any $d\in \Z$.

\section{$n$-pointed quasistable curves and balanced line bundles}

In the present section we will generalize the notions of quasistable curve and of balanced line bundle for curves with marked points. 

Our definitions will be done in such a way that, for $n=0$, these coincide with the old notions introduced by Gieseker and Caporaso (see Definition 1.1 in \cite{melo}), and that, for $n>0$, $n$-pointed quasistable curve are the ones we get by applying the stabilization morphism defined by Knudsen in \cite{knudsen} (see \ref{proof} below) to $(n-1)$-pointed quasistable curves endowed with an extra section without stability conditions. Moreover, balanced line bundles on $n$-pointed quasistable curves correspond to balanced line bundles on the quasistable curves obtained by forgetting the points and by contracting the rational components that get \textit{quasidestabilized} without the points (see Lemma \ref{balcorresp}).

We start by recalling the definitions of (semi)stable pointed curve and by introducing some notation.

\begin{defi}\label{pointedcurves}
An \textbf{$n$-pointed curve} is a connected, projective and reduced nodal curve $X$ together with $n$ distinct marked points $P_i\in X$ such that $X$ is smooth at $P_i$, $1\le i\le n$.

Let $g$ and $n$ be such that $2g-2+n>0$. An \textbf{$n$-pointed stable curve} (resp. \textbf{semistable}) is an $n$-pointed curve such that the number of points where a nonsingular rational component $E$ of $X$ meets the rest of $X$ plus the number of points $P_i$ on $E$ is at least $3$ (resp. $2$).

A \textbf{family of $n$-pointed stable} (resp. \textbf{semistable}) curves is a flat and proper morphism $\pi:X\rightarrow S$ together with $n$ distinct sections $s_i:S\rightarrow X$ such that the geometric fibers $X_s$ together with $s_i(s)$, $1\le i\le n$, are $n$-pointed stable (resp. semistable) curves.

\end{defi}

Note that $n$-pointed (semi)stable curves admit chains of smooth rational curves meeting the rest of the curve in one or two points. By \textbf{rational tail} we mean a proper subcurve $T$ of $X$ with $p_a(T)=0$ and $k_T=1$. Instead, proper subcurves $B$ of $X$ with $p_a(B)=0$ and $k_B=2$ will be denoted by \textbf{rational bridges}.

If $X$ is a strictly $n$-pointed semistable curve, a nonsingular rational component $E$ such that the number of points where $E$ meets the rest of $X$ plus the number of marked points $P_i$ on $E$ is exactly $2$ is called a \textbf{destabilizing} component. An \textbf{exceptional component} is a destabilizing component without marked points.

\begin{defi}\label{quasistabledef}
An $n$-pointed \textbf{quasistable} curve is an $n$-pointed semistable curve $X$ such that
\begin{enumerate}
\item all destabilizing components are exceptional;
\item exceptional components can not be contained in rational tails;
\item each rational bridge contains at most one exceptional component.
\end{enumerate}
A \textbf{family of $n$-pointed quasistable curves} is a proper and flat morphism with $n$ distinct sections whose geometric fibers are $n$-pointed quasistable curves.
\end{defi}

See Figure \ref{notquasistable} for examples of pointed semistable curves which are not quasistable.

\begin{figure}
\begin{center}
\scalebox{.6}{\epsfig{file=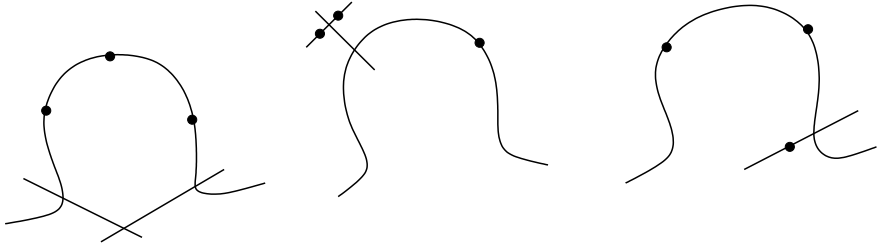}}
\end{center}
\caption{Examples of $3$-pointed semistable curves which are NOT quasistable}
\label{notquasistable}
\end{figure}

To each proper subcurve $Z$ of $X$, denote by $t_Z$ the number of rational tails meeting $Z$.

\begin{defi} \label{balanced}
Let $X$ be an $n$-pointed quasistable curve of genus $g\ge 3$ and $L$ a line bundle on $X$ of degree $d$.
We say that $L$ (or its multidegree) is \textbf{balanced} if the following conditions hold:
\begin{itemize}
\item deg$_EL=1$ for every exceptional component $E$ of $X$;
\item The degree of $L$ on rational bridges can be either $0$ or $1$;
\item If $T$ is a rational tail of $X$, then deg$_TL=-1$;
\item for every connected proper subcurve $Z$ of $X$, if $Z$ is not contained in any rational tail and in any rational bridge of $X$, the following inequality holds
\begin{equation}\label{basic}
|\mbox{deg}_ZL-\frac{d(w_Z-t_Z)}{2g-2}-t_Z|\leq \frac{k_Z-t_Z-2b^L_Z}{2}
\end{equation}
where $b_Z^L$ denotes the number of rational bridges where the degree of $L$ is zero meeting $Z$ in two points.
\end{itemize}
\end{defi}

Note that if $n=0$, $t_Z$ and $b_Z^L$ are equal to $0$ for all proper subcurves $Z$ of $X$, and inequality (\ref{basic}) reduces to the \av Basic Inequality\uv introduced by Gieseker in \cite{gieseker}. In fact, for $n=0$, Definition \ref{balanced} coincides with the definition of balanced multidegree for quasistable curves introduced by Caporaso in \cite{cap} (see also \cite{melo}, Def. 1.1).

Using the notation of \ref{balanced}, denote by

$$m_Z(d,L):=\frac{d w_Z+(3g-3-d)t_Z}{2g-2}+b_Z^L-\frac {k_Z}2$$
and by
$$M_Z(d,L):=\frac{d w_Z+(g-1-d)t_Z}{2g-2}-b_Z^L+\frac{k_Z}2.$$
Then, inequality (\ref{basic}) can be rewritten in the following way
$$m_Z(d,L)\le \mbox{deg}_ZL\le M_Z(d,L)$$

\begin{ex}\upshape{
In figure \ref{balex} we can see an example of a $12$-pointed quasistable curve $X$ consisting of two components of genus bigger than $0$, $C$ and $D$, intersecting each other in $1$ point and other rational components belonging to rational tails or rational bridges. The numbers on the figure indicate the multidegrees of a balanced line bundle on rational tails and on rational bridges. They are uniquely determined with the exception of the rational bridge where there is no exceptional component. In this case, other possibilities would be either $(1,0)$ or $(0,1)$.

\begin{figure}
\begin{center}
\scalebox{.6}{\epsfig{file=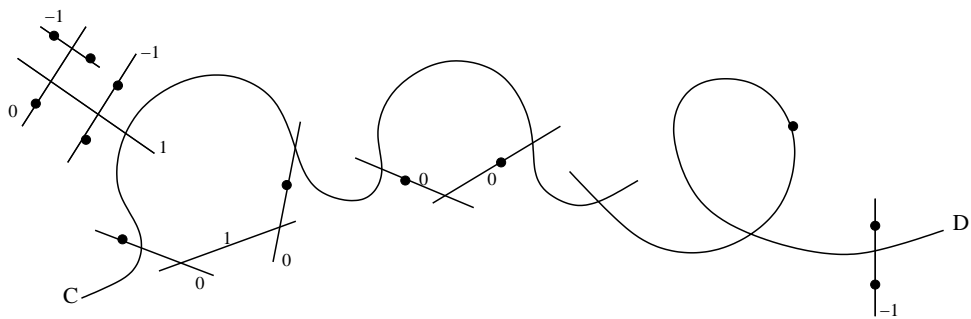}}
\end{center}
\caption{$12$-pointed quasistable curve with assigned balanced multidegree in rational tails and rational bridges.}
\label{balex}
\end{figure}

Consider $d=0$. Then, from Inequality (\ref{basic}), we see that the only possibility for a balanced line bundle of degree $0$ on $X$ completing the multidegree of the figure is to assign to $C$ degree $0$ and to $D$ degree $-1$. In fact, inequality (\ref{basic}) states that the degree of $L$ on $C$ can be either $0$, $1$ or $2$, while on $D$ it must be $0$ or $1$, so $(0,1)$ is the only possible choice in order to the total degree sum up to $0$. If, insted, we had chosen the degree in the rational bridge with no exceptional component to be $1$, then $L$ should have degree $-1$ on $C$. However, in this case inequality (\ref{basic}) would change to $C$: it would give $-1,0,1,2,3$ as possible degrees.

Consider now the case $d=g-1$. Then, since $g=g_C+g_D+2$, we can write $g-1$ as $g_C+g_D+1$. However, since the multidegrees assigned in the figure to rational tails and rational bridges sum up to $-1$, the sum of the degree of $L$ on $C$ with the degree of $L$ on $D$ must be $g_C+g_D+2$. Inequality (\ref{basic}) asserts that the degree of $L$ on $C$ must be in between $g_C+1$ and $g_C+4$ while on $D$ it must be $g_D$ or $g_D+1$. So, we have two possibilities: $(g_C+2,g_D)$ and $(g_D+1,g_D+1)$. If, instead, we had chosen the degree on the rational bridge to be $1$, then the sum of the degree of $L$ on $C$ with the degree of $L$ on $D$ should be $g_C+g_D+1$. However, inequality (\ref{basic}) would change to $C$, giving $g_C,\dots,g_C+5$ as the possible degrees on $C$. So, also in this case we would have two possibilities for the degrees of a balanced line bundle of total degree $g-1$ on $C$ and $D$: $(g_C,g_D+1)$ and $(g_C+1,g_D)$.}
\end{ex}

\subsection{First properties of balanced line bundles on $n$-pointed quasistable curves}

\begin{lem} \label{combdesc}
Let $X$ be an $n$-pointed quasistable curve and suppose $X$ admits a balanced line bundle $L$ on $X$ of degree $d$, for some $d\in \mathbb Z$. Then, if $Z$ is a proper subcurve of $X$ that is contained in a rational tail, then $\deg_ZL=k_Z-2$ and if $Z$ is contained in a rational bridge, then $\deg_ZL$ is either equal to $k_Z-2$ or $k_Z-1$.

In particular, the multidegree of $L$ on rational tails is unique and does not depend on $d$.
\end{lem}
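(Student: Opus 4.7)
The plan is to exploit the fact that a rational tail $T$ (resp.\ rational bridge $B$) is a tree of smooth rational components, so that for any connected subcurve $Z\subseteq T$ the components $W_1,\dots,W_m$ of $T\sm Z$ each attach to $Z$ through a single node of the tree $T$. Denoting by $\rho$ the attachment node of $T$ to $\o{X\sm T}$, each $W_j$ has $p_a(W_j)=0$ together with either $k_{W_j}=1$ (if $W_j$ does not contain $\rho$: then $W_j$ is a rational tail of $X$, so $\deg_{W_j}L=-1$) or $k_{W_j}=2$ (if $W_j$ contains $\rho$: then $W_j$ is a rational bridge of $X$). At most one of the $W_j$ can contain $\rho$.

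I split the analysis into two cases. If $Z$ contains $\rho$, every $W_j$ is a rational tail and $k_Z=1+m$, so summing degrees on $T$ yields
\[
\deg_Z L \;=\; \deg_T L - \sum_j \deg_{W_j}L \;=\; -1 + m \;=\; k_Z-2.
\]
If $Z$ does not contain $\rho$, exactly one $W_j$ (say $W_1$) contains $\rho$ and the rest are rational tails. Setting $Z':=Z\cup W_1$, both $Z'$ and $W_1$ are connected subcurves of $T$ that contain $\rho$, with $k_{Z'}=k_Z$ and $k_{W_1}=2$ by a quick count. Applying the first case to each gives $\deg_{Z'}L=k_Z-2$ and $\deg_{W_1}L=0$, whence $\deg_ZL=\deg_{Z'}L-\deg_{W_1}L=k_Z-2$. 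This proves the rational tail part, and the ``uniqueness'' assertion is the specialization to each irreducible component of $T$.

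For the rational bridge case the argument is parallel: with $p_1,p_2$ the external attachment nodes of $B$, each component of $B\sm Z$ is a rational tail of $X$ (if it contains neither $p_i$), a rational bridge of $X$ (if it contains exactly one $p_i$), or a rational subcurve with $k=3$ (if it contains both, handled by a further reduction). Combined with the constraint $\deg_B L\in\{0,1\}$, an analogous Case A / Case B reduction forces $\deg_Z L\in\{k_Z-2,\,k_Z-1\}$, the two possibilities differing by whether the extra unit of degree carried by $B$ ends up inside $Z$ or inside the opposite sub-bridge. The main obstacle I foresee is exactly this last point: unlike the rigid value $-1$ that rational tails contribute, sub-bridges of $B$ contribute degrees in $\{0,1\}$, so I would set up a simultaneous induction on the number of irreducible components of $T$ and $B$, so that in the Case B step for $B$ the degree of the auxiliary sub-bridge $W_1$ is already known inductively to be consistent with $\deg_B L$.
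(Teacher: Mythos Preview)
Your argument for the rational tail case is correct and, in fact, slightly more direct than the paper's. The paper proceeds component by component, starting at the ``foot'' $E$ (the unique component meeting $\overline{X\sm T}$): since $\overline{T\sm E}$ is a disjoint union of $k_E-1$ rational tails, $\deg_EL=-1+(k_E-1)=k_E-2$, and then one iterates into each sub-tail. Your complement decomposition of $T\sm Z$ reaches the same conclusion in one step for arbitrary connected $Z$; both arguments are equivalent uses of the tree structure and the balanced constraint $\deg_TL=-1$ on every rational tail.

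For the rational bridge case your sketch is on the right track but not yet complete, and the induction you propose needs one observation that you do not quite state. The subtle point is that in your Case~B (say $Z$ contains $p_1$ but not $p_2$, with $W_1$ the component of $\overline{B\sm Z}$ containing $p_2$), the naive subtraction $\deg_ZL=\deg_{Z'}L-\deg_{W_1}L$ with $\deg_{Z'}L\in\{k_Z-2,k_Z-1\}$ and $\deg_{W_1}L\in\{0,1\}$ a~priori allows $\deg_ZL=k_Z-3$. What rules this out is that $\overline{B\sm W_1}$ is itself a rational bridge of $X$ (it has $p_a=0$ and meets the rest of $X$ at $p_1$ and at the single attachment to $W_1$), so the balanced condition forces $\deg_{\overline{B\sm W_1}}L\in\{0,1\}$ as well; together with $\deg_BL=\deg_{W_1}L+\deg_{\overline{B\sm W_1}}L\in\{0,1\}$ this excludes the bad combination. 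Similarly, in the case where one $W_j$ contains both $p_1,p_2$ (your ``$k=3$'' case), the clean reduction is not a further bridge argument but rather the observation that $Z$ then lies entirely on a side branch of $B$, hence inside a rational \emph{tail} of $X$, so the tail case already gives $\deg_ZL=k_Z-2$.

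The paper handles the bridge case differently: it decomposes $B$ along its main chain as $B=B_1\cup\dots\cup B_{l_B}$, where each $B_i$ consists of a single chain component $E_i$ together with the side tails attached to it, and computes $\deg_{E_i}L\in\{k_{E_i}-2,k_{E_i}-1\}$ directly from $\deg_{B_i}L\in\{0,1\}$. The general statement for connected $Z\subseteq B$ is then implicit from the tree identity $\sum_{E\subseteq Z}(k_E-2)=k_Z-2$ and the fact that at most one $E_i$ in all of $B$ carries the extra unit.
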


\begin{proof}
Let us begin by showing that the multidegree of $L$ on rational tails is uniquely determined. So, suppose $T$ is a rational tail of $X$. If $T$ is irreducible, then the multidegree of $L$ on $T$ is just the degree of $L$ on $T$, which is necessarily $-1$.

Now, suppose $T$ is reducible. Then there is exactly one irreducible component $E$ of $T$ meeting the rest of the curve (in exactly one point). We will call $E$ the foot of the rational tail. $E$ is a smooth rational curve meeting the rest of $T$ in $k_E-1$ points: denote by $E_1,\dots,E_{k_E-1}$ the irreducible components of $T$ meeting $E$. Then, each $E_i$, $i=1,\dots, k_E-1$ is the foot of a rational tail contained in $T$. In fact, each one of these, if not irreducible, is attached to another rational chain that cannot intersect the rest of the curve since in that case $T$ would contain cycles (which would force $p_a(T)$ to be bigger than $0$). So, $T$ is the union of $E$ with $k_{E}-1$ rational tails meeting $E$, and  $$-1=\deg_TL=\deg_EL+\deg_{\o{T\sm E}}L=\deg{_EL}-(k_{E}-1)$$
which implies that
$$\deg_EL=k_E-2.$$
Note that we don't have to check if inequality (\ref{basic}) is satisfied since it does not apply for subcurves of $X$ contained in rational tails.

Now, iterating the same procedure, it is clear that the degree of each irreducible component of $T$ will be determined since $T$ is the union of $E$ with other $k_E-1$ rational tails with foots $E_1,\dots,E_{k_E-1}$.

Now, consider a rational bridge $B$. Then, $B$ meets the rest of the curve in two points, $p_1$ and $p_2$, and these are linked by a chain of (rational) irreducible components of $B$, $E_1,\dots, E_{l_B}$, each one meeting the previous and the next one, for $i=2,\dots ,l_B-1$. Moreover, each $E_i$ can have rational tails attached. Denote by $B_1,\dots,B_{l_B}$ respectively the proper subcurves of $B$ consisting of $E_i$ and the rational tails attached to it, for $i=1,\dots,l_B$. So, $B=B_1\cup\dots\cup B_{l_B}$ is the union of $l_B$ rational bridges of length $1$.

By definition, the degree of $L$ in $B$ can be either $0$ or $1$, and the same holds for each $B_i$, $i=1,\dots,l_B$. If $\deg_{B_i}L=0$, then, in order to the multidegree of $L$ on $B_i$ sum up to $0$, $\deg_{E_i}L$ must be equal to the number of rational tails attached to it: $t_{E_i}=k_{E_i}-2$. If, instead, the degree of $L$ on $B_i$ is equal to one, then deg$_{B_i}L$ must be equal to $t_{E_i}+1=k_{E_i}-1$. Note that inequality (\ref{basic}) gives that
$$t_{E_i}-1\leq\deg_{E_i}L\leq t_{E_i}+1$$
for $i=1,\dots,l_Z$, so either $t_{E_i}$ or $t_{E_i}+1$ are allowed.
The multidegree of $L$ on the rest of $B_i$ is fixed since $\o{B_i\sm E_i}$ consists of rational tails (that, of course, cannot intersect each other).

Now, if $B$ contains one exceptional component $E$ among the $E_i$'s, say $E_j$, the degree of $B$ must be necessarily $1$ (note that on each rational tail we can have at most one exceptional component by definition of pointed quasistable curve). In this case, we must have that $k_{E_j}=2$, which implies that $E_j$ has no rational tails attached, and the degree of $L$ on it must be $1$. Moreover, the degree of $L$ on the other rational subcurves $B_i$, for $i\neq j$, must be $0$.

If, instead, $B$ does not contain any exceptional component, then we can choose the degree of $L$ in $B$ to be either $1$ or $0$. If we choose it to be $0$, then the degree of $L$ on each $B_i$ must be $0$, for $i=1,\dots,l_B$. If we choose it to be one, we can freely choose one of the $B_i$'s where the degree of $L$ is $1$ and in all the others the degree of $L$ must be $0$.
\end{proof}

\begin{lem}\label{balcorresp}
Let $X$ be an $n$-pointed quasistable curve with assigned multidegree on rational bridges. Let $X'$ be the quasistable curve obtained by contracting all rational tails and rational bridges with assigned degree zero and by forgetting the points. Then, for each degree $d$, the set of balanced multidegrees on $X'$ summing up to $d$ and the set of balanced multidegrees on $X$ summing up to $d$ with the given assigned multidegree on rational bridges are in bijective correspondence.
\end{lem}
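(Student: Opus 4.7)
The plan is in two stages: first set up the bijection at the level of multidegrees using Lemma~\ref{combdesc}, then match the two balancedness conditions.

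By Lemma~\ref{combdesc}, a balanced multidegree on $X$ of total degree $d$ realizing the given assignment on rational bridges is entirely determined by its values on the "stable core" components $C_1,\dots,C_\gamma$ of $X$ (those contained in no rational tail and no rational bridge); similarly, a balanced multidegree on $X'$ is determined by its values on the non-exceptional components, the exceptional ones carrying forced degree~$1$. Since the contraction $c\colon X\to X'$ collapses only rational chains, the stable core components of $X$ correspond bijectively to the non-exceptional components of $X'$, while the degree-$1$ rational bridges of $X$ produce the exceptional components of $X'$ (each such bridge has, by Lemma~\ref{combdesc}, a unique backbone component of degree~$1$ surviving the contraction). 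I define the multidegree bijection by
\[
d_i' = d_i - N_{T,i}, \qquad d_E' = 1,
\]
where $N_{T,i}$ is the number of rational tails of $X$ attached to $C_i$ and $E$ ranges over exceptional components of $X'$; a direct count yields total degree $d$ on $X'$, and the inverse is evident.

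The core technical step is matching the two balancedness conditions. The key claim is that inequality~(\ref{basic}) for a proper connected subcurve $Z\subset X$ not contained in any rational tail or bridge is invariant under adjoining to $Z$ either an external rational tail or an external degree-$0$ rational bridge attached to $Z$. For a rational tail the increments are $(\Delta\deg_Z L,\Delta w_Z,\Delta k_Z,\Delta t_Z,\Delta b_Z^L)=(-1,-1,-1,-1,0)$, and using $(3g-3)/(2g-2)=3/2$ and $(g-1)/(2g-2)=1/2$ one finds $\Delta m_Z(d,L)=\Delta M_Z(d,L)=-1=\Delta\deg_Z L$. For a degree-$0$ bridge a parallel direct check gives $\Delta m_Z = \Delta M_Z = 0 = \Delta\deg_Z L$. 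Iterating, I saturate $Z$ to $\widetilde Z$ by adjoining all external rational tails and all external degree-$0$ bridges attached to $Z$; then $t_{\widetilde Z}=b_{\widetilde Z}^L=0$ and (\ref{basic}) for $\widetilde Z$ collapses to the classical Gieseker inequality $|\deg_{\widetilde Z}L-d\,w_{\widetilde Z}/(2g-2)|\le k_{\widetilde Z}/2$.

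Setting $Z':=c(\widetilde Z)$, a proper connected subcurve of $X'$, I verify $p_a(\widetilde Z)=p_a(Z')$ (contraction of rational tails and degree-$0$ bridges preserves arithmetic genus) and $k_{\widetilde Z}=k_{Z'}$ by a direct boundary-node bookkeeping, hence $w_{\widetilde Z}=w_{Z'}$; combined with $\deg_{\widetilde Z}L=\deg_{Z'}L'$ from the multidegree bijection, this identifies the Gieseker inequality for $\widetilde Z$ in $X$ with the Gieseker inequality for $Z'$ in $X'$. Reading the correspondence in both directions proves the bijection. The main obstacle is the invariance computation above: the correction terms $t_Z$ and $b_Z^L$ in the pointed basic inequality must be precisely the expressions that make both $m_Z$ and $M_Z$ change in lock-step with $\deg_Z L$ under saturation, and verifying this specific numerical identity is the heart of the argument, since it is what permits the reduction of the pointed balancedness condition to the classical unpointed one on $X'$.
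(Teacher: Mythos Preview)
Your argument is correct and takes a genuinely different route from the paper's. The paper proceeds by a single direct computation on irreducible components: defining the bijection by $d_i=\deg_{C_i'}L'+t_{C_i}$, it substitutes the relations $k_{C_i}=k_{C_i'}+t_{C_i}+2b_{C_i}^L$ and $g_{C_i'}=g_{C_i}+b_{C_i}^L$ into inequality~(\ref{basic}) for $C_i$ and checks algebraically that the result is exactly Gieseker's basic inequality for $C_i'$ on $X'$, then asserts that the same holds for arbitrary connected $Z$. Your two-step saturation argument replaces this with a pair of simpler increments (adjoin a tail, adjoin a degree-$0$ bridge), each of which moves $m_Z$, $M_Z$, and $\deg_Z L$ by the same amount, and then an identification $p_a(\widetilde Z)=p_a(Z')$, $k_{\widetilde Z}=k_{Z'}$ of the saturated subcurve with its image. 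The paper's route is shorter; yours is more conceptual, since the invariance step isolates exactly why the correction terms $t_Z$ and $b_Z^L$ enter the pointed inequality, and it treats general connected $Z$ uniformly rather than leaving the passage from irreducible components to arbitrary subcurves implicit. One point worth making explicit in your write-up is the converse direction on subcurves: every proper connected $Z'\subset X'$ is $c(\widetilde Z)$ for some saturated $\widetilde Z$ (take the union of the preimages of the non-exceptional components of $Z'$ together with the degree-$1$ bridges mapping to its exceptional components, then saturate), so that Gieseker for all $Z'$ in $X'$ really is equivalent to (\ref{basic}) for all admissible $Z$ in $X$.
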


\begin{proof}

Let $L'$ be a balanced line bundle on $X'$ with degree $d$. This means that, given a proper subcurve $Z'$ of $X'$, Gieseker's \av Basic Inequality\uv holds for $Z'$, that is,
\begin{equation}\label{giesekerbasic}
-\frac{k_{Z'}}{2}+\frac{dw_{Z'}}{2g-2}\le\deg_{Z'}L'\le \frac{dw_{Z'}}{2g-2}+\frac{k_{Z'}}{2}
\end{equation}
and that the degree of $L'$ on exceptional components is equal to $1$.

Let $C_i$ be an irreducible component of $X=C_1\cup \dots \cup C_\gamma$ such that $C_i$ is not contained in any rational tail and in any rational bridge. Define the multidegree $\underline{d}=(d_1,\dots,d_\gamma)$ on $X$ by declaring that $d_i=\deg_{C_i'}L'+t_{C_i}$ where $C_i'$ is the image of $C_i$ on $X'$. Then we easily see that this defines a balanced multidegree on $X$ (note that the multidegree of $L$ on rational bridges is fixed by hypothesis). In fact, since $k_{C_i}=k_{C_i'}+t_{C_i}+2r^L_{C_i}$ and $g_{C_i}=g_{C_i'}-b_{C_i}^L$, we have that
$$d_i=\mbox{deg}_{C_i' L}+t_{C_i}\le
\frac{dw_{C_i'}}{2g-2}+\frac{k_{C_i'}}{2}+t_{C_i}=$$
$$=\frac{d(2g_{C_i'}-2+k_{C_i'})}{2g-2}+\frac{k_{C_i}-t_{C_i}}{2}-b_{C_i}^L+t_{C_i}=$$
$$=\frac{d(2g_{C_i}+2b_{C_i}^L-2+k_{C_i}-t_{C_i}-2b_{C_i}^L)}{2g-2}+\frac{k_{C_i}}{2}+\frac{t_{C_i}}{2} -b_{C_i}^L=$$
$$=
\frac{dw_{C_i}}{2g-2}+\frac{k_{C_i}}{2}-\frac{d}{2g-2}t_{C_i}+\frac{t_{C_i}}{2}-b_{C_i}^L=$$
$$=\frac{dw_{C_i}}{2g-2}+\frac{k_{C_i}}{2}+\frac{g-1-d}{2g-2}t_{C_i}-b_{C_i}^L$$
and also that
$$d_i\ge
\frac{dw_{C_i'}}{2g-2}-\frac{k_{C_i'}}{2}+t_{C_i}=$$
$$=\frac{dw_{C_i}}{2g-2}-\frac{k_{C_i}}{2}+\frac{3g-3-d}{2g-2}t_{C_i}+b_{C_i}^L$$
so inequality (\ref{basic}) holds for $C_i$ if and only if Gieseker's Basic Inequality holds for $C_i'$. It is easy to see that this is true more generally for any proper subcurve $Z$ of $X$ not contained in any rational tail and in any rational bridge.
\end{proof}

\section{Balanced Picard stacks for $n$-pointed quasistable curves}

\begin{defi}\label{compstack} For any integer $d$ and $g\geq 3$,
denote by $\o\P_{d,g,n}$ the following category fibered in groupoids over the category of schemes over $k$.
Objects over a $k$-scheme $S$ are families $(\pi:X\rightarrow S, s_i:S\rightarrow X,L)$ of $n$-pointed quasistable curves over $S$ and a balanced line bundle $L$ on $X$ of relative degree $d$.

Morphisms between two such objects are given by cartesian diagrams
\begin{equation*}
\xymatrix{
{ X} \ar[d]^{\pi} \ar[r]^{\beta_2}  & { X'} \ar[d]_{\pi'} \\
{S} \ar[r]_{\beta_1} \ar @{->} @/^/[u]^{s_i} & {S'} \ar @{->} @/_/[u]_{s_{i'}}
}
\end{equation*}
such that $s_{i'} \circ \beta_1=\beta_2\circ s_i$, $1\le i \le n$, together with an isomorphism $\beta_3:L\rightarrow \beta_2^*(L')$.
\end{defi}

Note that $\o\P_{d,g,n}$ contains $\mathcal Pic_{d,g,n}$ for all $n\geq 0$.

In what follows we will try to prove the following statement.

\begin{teo}\label{aim}
$\o\P_{d,g,n}$ is a smooth and irreducible algebraic (Artin) stack of dimension $4g-3+n$ endowed with a universally closed map onto $\o{\mathcal M}_{g,n}$.
\end{teo}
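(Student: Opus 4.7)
The plan is to induct on $n$, following Knudsen's strategy from \cite{knudsen}. For the base case $n=0$ the definition specialises to $\o\P_{d,g,0}=\o\G_{d,g}$, for which all the assertions of the theorem are established in \cite{melo} and recalled in Section \ref{n=0}: $\o\G_{d,g}$ is a smooth, irreducible Artin stack of dimension $4g-3$ with a universally closed morphism onto $\o\M_g$. For the inductive step I would construct a canonical isomorphism
\[
\o\P_{d,g,n}\;\simeq\;\C_{d,g,n-1},
\]
where $\C_{d,g,n-1}\to\o\P_{d,g,n-1}$ is the universal family of $(n-1)$-pointed quasistable curves carrying a balanced line bundle. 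Given such an isomorphism, algebraicity, smoothness and irreducibility of $\o\P_{d,g,n}$ propagate from $n-1$ to $n$, because the universal family of a smooth irreducible algebraic stack with $1$-dimensional fibres is again a smooth irreducible algebraic stack, with dimension shifted by one, yielding the expected $4g-3+n$. Universal closedness of $\Psi_{d,g,n}\colon\o\P_{d,g,n}\to\o\M_{g,n}$ will follow from the fact that, via the isomorphism above, $\Psi_{d,g,n}$ factors through the universally closed $\Psi_{d,g,n-1}$ (known by induction) and Knudsen's proper map $\o\M_{g,n}\to\o\M_{g,n-1}$; the compatibility of these factorisations is the content of Section \ref{forgetful}.

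To construct the isomorphism I would build two mutually inverse morphisms generalising Knudsen's contraction and stabilisation. Given $(\pi\colon X\to S,s_1,\dots,s_n,L)$ in $\o\P_{d,g,n}$, the \emph{contraction} forgets $s_n$ and contracts those rational components of $X$ that become either unstable or quasi-destabilised, producing a family $(X'\to S,s_1',\dots,s_{n-1}',L')$ in $\o\P_{d,g,n-1}$; by Lemma \ref{balcorresp} the balanced line bundle $L$ descends to a balanced line bundle $L'$ of the same total degree on $X'$, and the image of $s_n$ on $X'$ supplies the extra section, so the output lies in $\C_{d,g,n-1}$. Conversely, given an object of $\C_{d,g,n-1}$, i.e.\ a family in $\o\P_{d,g,n-1}$ together with a further (possibly non-special) section $\sigma$, \emph{stabilisation} modifies $X'$ over $\sigma$ by inserting an exceptional component whenever $\sigma$ meets a node, and a rational bridge or tail whenever $\sigma$ collides with one of the $s_i'$; Lemma \ref{combdesc} then pins down uniquely the multidegree of the balanced extension of $L'$ to the modified curve.

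The main obstacle is performing the stabilisation \emph{in families} over an arbitrary base $S$, i.e.\ producing the enlarged family together with its balanced line bundle as genuine algebraic data rather than merely pointwise. In Knudsen's argument the new family is obtained as $\mathrm{Proj}$ of an appropriate push-forward of $\omega_{X'/S}$ twisted by the sections, and the construction rests on cohomology and base change together with global and normal generation for the twisted sheaf. In our setting the analogous construction should use a twist built out of $L'$, $\omega_{X'/S}$, the existing sections and the new section $\sigma$, arranged so that its direct image is locally free of the expected rank, commutes with arbitrary base change, and has projectivisation realising exactly the insertion of a quasistable component with the correct balanced multidegree. Verifying these properties is precisely the purpose of the global and normal generation results for balanced line bundles on reducible nodal curves promised in Section \ref{technical}; once those are in place, the induction closes and the theorem follows.
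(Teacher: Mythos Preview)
Your overall strategy is the paper's strategy: induct on $n$ via an isomorphism between $\o\P_{d,g,n}$ and the universal family over $\o\P_{d,g,n-1}$, build that isomorphism from a contraction/stabilisation pair, and deduce universal closedness from diagram (\ref{induction}). The base case and the propagation of smoothness, irreducibility and dimension are handled exactly as you describe.

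However, you have the two constructions swapped when it comes to where the real work lies. In the paper (as in Knudsen) it is the \emph{contraction} that is built globally as a $\mathrm{Proj}$: one forms $\mathcal S_i=\pi_*\bigl(L(s_1+\dots+s_n)^{\otimes i}\bigr)$ and takes $X'=\mathrm{Proj}\bigl(\bigoplus_{i\ge 0}\mathcal S_i\bigr)$, with $L'$ coming from $\mathcal O(1)$. Flatness of $X'$ over $S$ and the identification of $X'$ with the fibrewise image require exactly the vanishing, global generation and normal generation results of Section~\ref{technical} (packaged as Corollary~\ref{morebasicprop}). The \emph{stabilisation}, by contrast, is not a $\mathrm{Proj}$ of a push-forward at all: it is Knudsen's local construction $X^s=\mathbb P(\mathcal K)$, where $\mathcal K$ is the cokernel of the diagonal $\mathcal O_X\to \check{\mathcal I}_\Delta\oplus\mathcal O_X(s_1+\dots+s_n)$, and the balanced line bundle on $X^s$ is simply the pullback $p^*L$ (degree $0$ on any new component), with no appeal to Section~\ref{technical}. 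Your proposed ``twist built out of $L'$, $\omega_{X'/S}$, the sections and $\sigma$'' for stabilisation does not exist in the paper and would be hard to make work, since the extra section $\sigma$ has no positivity and can hit nodes. So the plan is right, but relocate the analytic input: Section~\ref{technical} feeds the contraction, and stabilisation is the easy direction.
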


Recall that,
for $n=0$, $\o\P_{d,g,n}$ coincides with the stack $\o{\G}_{d,g}$ defined in \cite{melo}, section 3 (see \ref{n=0} above), so Theorem \ref{aim} holds in this case.

Now, following Knudsen's construction of $\o{\M}_{g,n}$ (see \cite{knudsen}), we will show that Theorem \ref{aim} holds for all $n\geq 0$ using the following induction argument. We will prove that, for $n>0$, $\o\P_{d,g,n+1}$ is isomorphic to the universal family over $\o\P_{d,g,n}$.

By universal family over $\o\P_{d,g,n}$ we mean an algebraic stack $\mathcal Z_{d,g,n}$ with a map onto $\o\P_{d,g,n}$ admitting $n$-sections $\sigma_{d,g,n}^i:\o\P_{d,g,n}\rightarrow \mathcal Z_{d,g,n}, i=1,\dots ,n$ and endowed with an (universal) invertible sheaf $\mathcal L$ such that, given a family $f:C\rightarrow S, s_i:S\rightarrow C,i=1,\dots,n$ of $n$-pointed quasistable curves and a balanced line bundle $L$ over $C$ of relative degree $d$,
the following diagram, commuting both in the upward and downward directions,
\begin{equation}\label{universal}
\xymatrix{
{C} \ar[r]^{\pi_2} \ar[d]^{f} & {\mathcal{Z}_{d,g,n}} \ar[d]\\
{S} \ar[r]_{\mu_f} \ar @{->} @/^/[u]^{s_i} & {\o\P_{d,g,n}} \ar @{->} @/_/[u]_{\sigma_{d,g,n}^i}
}
\end{equation}
is cartesian and induces an isomorphism between $\pi_2^*(\mathcal L)$ and $L$.

Let $\mathcal Z_{d,g,n}$ be the category whose sections over a scheme $Y$ are families of $n$-pointed quasistable curves $X\rightarrow Y,t_i:Y\rightarrow X, i=1,\dots ,n$ endowed with a balanced line bundle $M$ of relative degree $d$ and with an extra section $\Delta:Y\rightarrow X$. Morphisms in $\mathcal Z_{d,g,n}$ are like morphisms in $\o\P_{d,g,n}$ compatible with the extra section. $\mathcal Z_{d,g,n}$ is an algebraic stack endowed with a forgetful morphism onto $\o\P_{d,g,n}$ admitting $n$ sections given by the diagonals $\delta_{1,n+1}, \dots, \delta_{n,n+1}$.

It is easy to see that, given a family of $n$-pointed quasistable curves $f:C\rightarrow S, s_i:S\rightarrow C,i=1,\dots,n$ and a balanced line bundle $L$ over $C$ of relative degree $d$, diagram (\ref{universal}) is cartesian, where $\pi_2$ is defined by associating to the identity morphism $1_C:C\rightarrow C$ the fiber product of $f:C\rightarrow S$ with itself

\begin{equation}\label{identity}
\xymatrix{
{C\times_S C} \ar[r]^{p_2} \ar[d]^{p_1} & {C} \ar[d]_{f}\\
{C} \ar[r]_{f} \ar @{->} @/^/[u]^{f^*s_i} & {S} \ar @{->} @/_/[u]_{s_i}
}
\end{equation}
endowed with an extra section $\Delta:C\rightarrow C\times_SC$ given by the diagonal and with the relative degree $d$ line bundle $p_2^*(L)$. Given another object $h:Y\rightarrow C$ of $C$, $\pi_2(h)$ is defined to be the fiber product of $h$ and $p_1$ defined in (\ref{identity}), naturally endowed with the $n+1$ pullback sections and with the pullback of $p_2^*(L)$.

The universal sheaf over $\mathcal Z_{d,g,n}$, $\mathcal L$, is defined by associating to each section $(X\rightarrow Y, t_i,M,\Delta)$ of $\mathcal Z_{d,g,n}$ over $Y$,
the line bundle $\Delta^*(M)$ over $Y$. It is easy to see that this defines an invertible sheaf on $\mathcal Z_{d,g,n}$.

Now we easily check that $\mathcal L$ is the universal sheaf over $\mathcal Z_{d,g,n}$. Indeed, given an object $h:Y\rightarrow C$ on $C$, $\pi_2^*(\mathcal L)(h)=\mathcal L(\pi_2(h))\cong h^*(L)$, so it is isomorphic to the sheaf defined by $L$ on $C$, considered as a stack.

We have just proved the following.

\begin{prop}
The algebraic stack $\mathcal Z_{d,g,n}$ defined above endowed with the invertible sheaf $\mathcal L$ is the universal family over $\o\P_{d,g,n}$ for the moduli problem of $n$-pointed quasistable curves with a balanced degree $d$ line bundle.
\end{prop}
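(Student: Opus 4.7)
The plan is to verify two things: that diagram (\ref{universal}) is 2-cartesian for every choice of data $(f\colon C\to S, s_i, L)$, and that the natural comparison morphism $\pi_2^*\mathcal L\to L$ is an isomorphism. Once these are established, the statement that $\mathcal Z_{d,g,n}$ (with $\mathcal L$) represents the universal family for the stated moduli problem follows immediately from the 2-Yoneda lemma applied to the category fibered in groupoids whose $T$-points are objects of $\o\P_{d,g,n}$ together with a section and an invertible sheaf pulling back to the fixed $L$.

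For the first task, I would spell out $\pi_2$ functorially. Given an object $h\colon T\to C$ of the stack $C$ (that is, a morphism from a $k$-scheme $T$ to $C$ regarded as a representable stack over $\o\P_{d,g,n}$ via $\mu_f\circ f$), form the fiber product $C\times_S T$ with its projections $p_1,p_2$, equip it with the $n$ sections $p_2^*s_i$, the relative degree $d$ line bundle $p_1^*L$, and the extra section $\Delta_h\colon T\to C\times_S T$ induced by $h$ and $1_T$. I would then check that $(p_2\colon C\times_S T\to T, p_2^*s_i, p_1^*L, \Delta_h)$ defines a section of $\mathcal Z_{d,g,n}$ over $T$: flatness and properness are preserved by base change, the geometric fibers remain $n$-pointed quasistable curves (Definition \ref{quasistabledef}), and the balanced condition of Definition \ref{balanced} is preserved under pullback since it depends only on the numerical invariants of the geometric fibers.

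To verify that (\ref{universal}) is 2-cartesian, I would exhibit a quasi-inverse to $\pi_2$ on the fiber product $S\times_{\o\P_{d,g,n}}\mathcal Z_{d,g,n}$: a $T$-point consists of a morphism $\mu\colon T\to S$ together with an object $(X\to T, t_i, M, \Delta)$ of $\mathcal Z_{d,g,n}$ and an isomorphism of the underlying balanced family $(X\to T, t_i, M)$ with the pullback of $(C\to S, s_i, L)$ along $\mu$. Composing the section $\Delta$ with this identification yields a morphism $T\to C$, and one checks this construction is inverse to $\pi_2$ up to canonical 2-isomorphism. The upward commutativity with the sections $s_i$ and $\sigma^i_{d,g,n}$ is automatic since $\sigma^i_{d,g,n}$ is defined via the diagonal $\delta_{i,n+1}$, which pulls back to $s_i$ under $\pi_2$.

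Finally, for the sheaf statement I would unravel definitions: given $h\colon T\to C$, we have $\pi_2(h)=(p_2\colon C\times_S T\to T, p_2^*s_i, p_1^*L, \Delta_h)$, so $\mathcal L(\pi_2(h))=\Delta_h^*(p_1^*L)=(p_1\circ \Delta_h)^*L=h^*L$, naturally in $h$. This furnishes the desired isomorphism $\pi_2^*\mathcal L\cong L$ as sheaves on $C$. The only real point requiring care is functoriality with respect to morphisms in the source stack $C$: I expect this to be the main bookkeeping obstacle, but it is formal, since every construction above is given in terms of fiber products and pullbacks and hence compatible with base change by the universal properties involved.
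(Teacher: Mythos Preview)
Your proposal is correct and follows essentially the same approach as the paper: define $\pi_2$ via fiber products carrying the pulled-back sections, line bundle, and diagonal extra section, then compute $\mathcal L(\pi_2(h))=\Delta_h^*(p_1^*L)=h^*L$. The paper's argument is terser (it simply asserts the diagram is cartesian and performs the sheaf computation in one line), whereas you spell out the quasi-inverse on $S\times_{\o\P_{d,g,n}}\mathcal Z_{d,g,n}$ and the preservation of the quasistable/balanced conditions under base change; this extra care is welcome but not a different method.
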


Now, suppose we can show that, for all $n\geq 0$, there is forgetful morphism $\Psi_{d,g,n}$ from $\o\P_{d,g,n}$ onto $\o{\mathcal M}_{g,n}$
such that the image under $\Psi_{d,g,n}$ of an $n$-pointed quasistable curve $X$ over $S$ endowed with a balanced degree $d$ line bundle is the stable model of $X$ over $S$ forgeting the line bundle. These morphisms would yield commutative diagrams as follows, for all $n>0$.

\begin{equation}\label{induction}
\xymatrix{
& {\o\P_{d,g,n}} \ar[rd]^{\Psi_{d,g,n}} \ar[dl]_{\Phi_{d,g,n}} & \\
{\o\P_{d,g,n-1}} \ar[dr]_{\Psi_{d,g,n-1}} & & {\o{\mathcal M}_{g,n}} \ar[dl]^{\Pi_{g,n}}\\
& {\o{\mathcal M}_{g,n-1}} &
}
\end{equation}

Since $\Pi_{g,n}$ and $\Phi_{d,g,n}$ are the morphisms from the universal families over $\o\P_{d,g,n-1}$ and $\o{\mathcal M}_{g,n-1}$, respectively, it follows that $\Psi_{d,g,n}$ is universally closed (or proper) if and only if $\Psi_{d,g,n-1}$ is. Since, by \cite{melo}, $\Psi_{d,g,0}$ is universally closed, we have that $\Psi_{d,g,n}$ is universally closed for all $n\geq 0$.

So, our proof of Theorem \ref{aim} will follow if we can prove the following statement, which is the aim of the present paper.

\begin{teo}\label{main}
For all $n>0$, $\o\P_{d,g,n+1}$ is isomorphic to the algebraic stack $\mathcal Z_{d,g,n}$.
\end{teo}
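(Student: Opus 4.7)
The plan is to exhibit mutually quasi-inverse morphisms
$$ c \colon \o\P_{d,g,n+1} \longrightarrow \mathcal Z_{d,g,n}, \qquad s \colon \mathcal Z_{d,g,n} \longrightarrow \o\P_{d,g,n+1}, $$
generalizing Knudsen's \emph{contraction} and \emph{stabilization} to the balanced/quasistable setting, and to check that they are inverse to each other up to canonical $2$-isomorphism.

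First I would construct $c$. Given a family $(\pi\colon X\to S, s_1,\dots,s_{n+1}, L)\in \o\P_{d,g,n+1}(S)$, the idea is to forget the marking of $s_{n+1}$ and then contract the components that fail the $n$-pointed quasistability conditions of Definition \ref{quasistabledef}. Working fiberwise, the only components affected are: (a) an irreducible rational tail whose only mark was $s_{n+1}$; (b) a rational bridge that becomes a rational tail after forgetting $s_{n+1}$; (c) an exceptional component that was stabilized by containing $s_{n+1}$. By Lemma \ref{combdesc} the degrees of $L$ on these components are forced, and by Lemma \ref{balcorresp} the pushforward of $L$ along the relative contraction is again balanced of degree $d$ on the resulting $n$-pointed quasistable curve $X'\to S$. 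The image of $s_{n+1}$ produces an extra section $\Delta\colon S\to X'$, and the datum $(X'\to S, s_1,\dots,s_n, \pi_*L, \Delta)$ is an object of $\mathcal Z_{d,g,n}(S)$. The relative construction uses that the locus to be contracted is cut out by the relative dualizing sheaf conditions and is flat over $S$, so contraction commutes with arbitrary base change.

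Next I would construct $s$. Given $(X'\to S, t_1,\dots,t_n, M, \Delta)\in \mathcal Z_{d,g,n}(S)$ I would produce an $(n+1)$-pointed quasistable curve $X\to S$ with a balanced degree $d$ line bundle. The construction is local around $\Delta$, and splits into three cases: if $\Delta$ factors through the smooth locus of $\pi$ and is disjoint from the $t_i$, set $(X,L):=(X',M)$ and take $s_{n+1}:=\Delta$; if $\Delta$ coincides with some $t_i$ (fiberwise), blow up and insert a rational tail of length one carrying $t_i$ and $s_{n+1}$, extending $M$ by the unique balanced multidegree $(-1)$ forced by Lemma \ref{combdesc}; if $\Delta$ meets a node of a fiber, insert a rational bridge of length two (one component exceptional, carrying $s_{n+1}$) and extend $M$ by the unique balanced multidegree dictated by Definition \ref{balanced}. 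Globally, the right object to consider is the relative Proj of a sheaf like $\pi_*(\omega_{X'/S}(\sum t_i + \Delta)^{\otimes k})$ suitably modified, exactly as in Knudsen's stabilization, and the line bundle is obtained by pulling back $M$ and twisting by the new exceptional divisor so that its degree on each inserted component is $1$. Verifying that the result lies in $\o\P_{d,g,n+1}$ requires checking inequality (\ref{basic}) on every proper subcurve of $X$; this reduces via Lemma \ref{balcorresp} to the balanced condition already known on $X'$.

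Finally I would check $c\circ s\simeq \mathrm{id}_{\mathcal Z_{d,g,n}}$ and $s\circ c\simeq \mathrm{id}_{\o\P_{d,g,n+1}}$. Both verifications are \'etale-local on the base and reduce fiberwise to a case-by-case combinatorial comparison of which components get contracted and which get inserted, together with tracking the multidegree assignment, which is unambiguous because Lemma \ref{combdesc} fixes the degrees on rational tails and rational bridges. Functoriality in $S$ and compatibility with morphisms (including the isomorphism $\beta_3$ of line bundles in Definition \ref{compstack}) is automatic from the construction of $c$ and $s$ as relative blow-ups/contractions together with pull-back and push-forward of line bundles. This gives the equivalence of categories fibered in groupoids, hence the isomorphism of algebraic stacks, which in turn yields Theorem \ref{aim} by the inductive argument on $n$ recorded after diagram (\ref{induction}).

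I expect the main obstacle to be the relative construction of $s$ when $\Delta$ meets a node or a marked section. One needs a canonical sheaf on $X'$ whose relative Proj performs the blow-up compatibly with arbitrary base change and, simultaneously, a canonical way to extend $M$ to the inserted components so that the resulting line bundle is balanced and the extension is unique up to unique isomorphism. This is precisely where the global and normal generation properties of balanced line bundles established in Section \ref{technical} enter: they guarantee that the relative sheaf used to cut out the stabilization is cohomologically well-behaved (vanishing of $R^1\pi_*$, surjectivity on fibers), so that formation of the construction commutes with base change and yields a flat family of pointed quasistable curves of the expected numerical type.
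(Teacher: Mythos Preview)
Your overall architecture---define a contraction functor $c$ and a stabilization functor $s$ and show they are quasi-inverse---is indeed the paper's strategy, but your implementation of both functors departs from the paper in ways that introduce genuine gaps.

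\textbf{Contraction.} The paper does not contract via the dualizing sheaf, nor does it take $f_*L$ as the new line bundle. The contraction functor is constructed in \S6.2 (before the proof of Theorem~\ref{main}) as the image of the map $q\colon X\to \mathbb P\bigl(\pi_*(L(s_1+\dots+s_n))\bigr)$, and the contracted line bundle is $L^c:=\mathcal O_{\mathbb P}(1)|_{q(X)}(-t_1-\dots-t_n)$; this is exactly why the results of Section~\ref{technical} are proved for $L(p_1+\dots+p_{n-1})$ and $L(p_1+\dots+p_n)$ rather than for powers of $\omega$. Your $f_*L$ is not a line bundle when a rational tail is contracted (there $\deg_E L=-1$, so $H^0(E,L|_E)=0$), and your description ``contract the components that fail $n$-pointed quasistability'' is not what happens: a length-one bridge $E$ carrying only $s_{n+1}$ is contracted by $q$ precisely when $\deg_E L=0$; when $\deg_E L=1$ it survives as an exceptional component of the $n$-pointed curve. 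Your case list (a)--(c) does not distinguish these, and (b) (``a rational bridge that becomes a rational tail'') does not occur.

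\textbf{Stabilization.} The paper uses Knudsen's construction verbatim: $X^s:=\mathbb P(\mathcal K)$ with $\mathcal K$ defined by $0\to\mathcal O_X\to \check{\mathcal I}\oplus\mathcal O_X(s_1+\dots+s_n)\to\mathcal K\to 0$, and simply sets $L^s:=p^*L$ with \emph{no} twist; the degree on every inserted component is $0$. When $\Delta$ hits a node, a \emph{single} $\mathbb P^1$ is inserted, not a bridge of length two, and it carries $s_{n+1}$ (so it is not exceptional in the $(n+1)$-pointed curve). Your prescription---insert two components, one exceptional, and twist so the degree on inserted components is $1$---is incompatible with the contraction above: for $c$ to collapse such a component one needs $\deg_E L(s_1+\dots+s_n)=0$, i.e.\ $\deg_E L=0$, since $s_{n+1}$ does not appear in the twist.

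\textbf{Structure of the argument.} The paper does not verify both composites. It shows $c$ is full (immediate), faithful (an automorphism of $\mathbb P^1$ fixing three points is the identity, plus flatness), and essentially surjective by exhibiting, for each $(X,s_i,\Delta,L)\in\mathcal Z_{d,g,n}$, the stabilization $X^s$ and checking directly that $c(X^s)\cong X$ via the natural surjection $\pi^*\pi_*(L(s_1+\dots+s_n))\to L(s_1+\dots+s_n)$.
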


\begin{rem}\upshape{
Recall that, if $n=0$, in \cite{melo}, $\o\P_{d,g,0}=\o{\G}_{d,g}$ is shown to be isomorphic to the quotient stack $[H_d/GL(r+1)]$, where $H_d$ is a certain subscheme of $\Hilb_r^{dt-g+1}$. Note that the action of $GL(r+1)$ in $H_d$ naturally lifts to an action in $\mathcal Z_d$, where $\mathcal Z_d$ is the restriction to $H_d$ of the universal family over the Hilbert scheme. Using a similar proof we can show that $\mathcal Z_{d,g,1}$ is isomorphic to the quotient stack $[\mathcal Z_d/GL(r+1)]$.}
\end{rem}

\section{Properties of line bundles on reducible nodal curves}\label{technical}

In this section we prove some technical properties of line bundles over (reducible) nodal curves that will be used later in the proof of Theorem \ref{main}.

\begin{lem}\label{basicpropgen}
Let $X$ be a nodal curve of genus $g$ and $L\in \Pic^dX$. If $\deg_ZL\geq 2g_Z-1$ for every connected subcurve $Z\subseteq X$, then $H^1(X,L)=0$. Moreover, if strict inequality holds above for all $Z\subseteq X$, $L$ has no base points.
\end{lem}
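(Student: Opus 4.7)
For the first assertion, the plan is to pass to the dual and argue contrapositively. Since nodal curves are Gorenstein, Serre duality gives
$$H^1(X,L)^*\cong H^0(X,\omega_X\otimes L^{-1}),$$
so I would assume there is a non-zero global section $s$ of $\omega_X\otimes L^{-1}$ and derive a contradiction. Let $Y_0$ be a connected component of the union of those irreducible components of $X$ on which $s$ does not vanish identically. At each node $q\in D:=Y_0\cap\o{X\sm Y_0}$, the two branch values of $s$ at $q$ must coincide, and the branch lying in $\o{X\sm Y_0}$ contributes zero; hence $s$ vanishes at every point of $D$. Consequently $s|_{Y_0}$ defines a non-zero global section of $(\omega_X\otimes L^{-1})|_{Y_0}(-D)$ which, by the very choice of $Y_0$, is non-zero on every irreducible component of $Y_0$.

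The key estimate is then, using (\ref{omegaZ}) and the hypothesis applied to the connected subcurve $Y_0\subseteq X$,
$$\deg_{Y_0}\bigl((\omega_X\otimes L^{-1})|_{Y_0}(-D)\bigr)=w_{Y_0}-\deg_{Y_0}L-k_{Y_0}=2g_{Y_0}-2-\deg_{Y_0}L\le -1.$$
On the other hand, a section non-vanishing on every irreducible component has an effective divisor on each of them, forcing all component-wise, and hence the total, degrees to be non-negative. This contradiction establishes $H^1(X,L)=0$.

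For the second assertion I would reduce base-point-freeness at $p\in X$ to the vanishing of $H^1(X,L\otimes\mathcal I_p)$: combined with $H^1(X,L)=0$ from the first part, the exact sequence $0\to L\otimes\mathcal I_p\to L\to k(p)\to 0$ forces surjectivity of the evaluation $H^0(X,L)\to L\otimes k(p)$. When $p$ is a smooth point, $L\otimes\mathcal I_p=L(-p)$ is a line bundle and, for any connected $Z\subseteq X$, $\deg_Z L(-p)$ equals $\deg_Z L$ or $\deg_Z L-1$ according as $p\notin Z$ or $p\in Z$; in either case $\deg_Z L(-p)\ge 2g_Z-1$, with the strict inequality being needed exactly when $p\in Z$ (giving $\deg_Z L\ge 2g_Z$). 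Applying the first part to $L(-p)$ then produces the required vanishing.

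When $p$ is a node, let $\nu\colon X'\to X$ denote the partial normalization at $p$, with preimages $p_1,p_2$. A local computation identifies $\mathcal I_p\cong\nu_*\mathcal O_{X'}(-p_1-p_2)$, and the projection formula reduces the problem to $H^1(X',\nu^*L(-p_1-p_2))=0$, which I would obtain by applying the first part of the lemma on $X'$ to the line bundle $\nu^*L(-p_1-p_2)$. The main obstacle lies in checking the degree hypothesis on $X'$: by a small case analysis on a connected subcurve $Z\subseteq X'$ according to whether it contains neither, exactly one, or both of $p_1,p_2$, one matches $Z$ with the corresponding connected subcurve $\nu(Z)\subseteq X$ (which either avoids $p$, meets $p$ on a single branch, or has a node at $p$, with $g_Z=g_{\nu(Z)}$ in the first two cases and $g_Z=g_{\nu(Z)}-1$ in the last). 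Translating the strict inequality for $\nu(Z)\subseteq X$ in each case yields $\deg_Z(\nu^*L(-p_1-p_2))\ge 2g_Z-1$, completing the reduction.
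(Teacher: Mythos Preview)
Your proof is correct, and it follows a route that is structurally parallel to the paper's but more self-contained.

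For the first assertion, the paper simply invokes an external result (Lemma~\ref{222}, Caporaso's Lemma 2.2.2) to get $h^0(X,L)=d-g+1$, and then concludes $h^1=0$ via Riemann--Roch. You instead argue directly: by Serre duality, a hypothetical non-zero section of $\omega_X\otimes L^{-1}$, restricted to a connected subcurve $Y_0$ on which it vanishes on no component, would live in a line bundle of negative total degree yet have non-negative degree on every component---a contradiction. This is in effect a proof of the part of Lemma~\ref{222} that is actually needed, folded into the argument.

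For the second assertion, both proofs split into the smooth and nodal cases and, at a node, pass to the partial normalization. The difference lies in what is being controlled: the paper computes $h^0$ throughout, repeatedly applying Lemma~\ref{222} to show the section space drops, and in the nodal case argues by contradiction separately for disconnecting and non-disconnecting nodes. You instead reduce everything to an $H^1$-vanishing and re-apply the first part of the lemma to $L(-p)$ (smooth case) or to $\nu^*L(-p_1-p_2)$ on $X'$ (nodal case), with a uniform case analysis on how a connected $Z\subseteq X'$ meets $\{p_1,p_2\}$. Your approach has the advantage of being entirely self-contained---Lemma~\ref{222} is never used---and of treating disconnecting and non-disconnecting nodes in one stroke; the paper's approach has the minor advantage of making the exact value of $h^0$ explicit at each step.
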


To prove Lemma \ref{basicpropgen} we will use the following Lemma, which is Lemma 2.2.2 in \cite{cliff}.

\begin{lem}[Caporaso,\cite{cliff}]\label{222}
Let $X$ be a nodal curve of genus $g$ and $L\in \Pic^dX$. If, for every connected subcurve $Z$ of $X$, $\deg_ZL\geq 2g-2$, then $h^0(X,L)=d-g+1$.
\end{lem}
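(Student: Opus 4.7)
The plan is to follow a standard Mayer--Vietoris induction on the number of irreducible components of $X$, reducing the vanishing $h^0(X,L)=d-g+1$ to the vanishing of $H^1(X,L)$.

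First I would invoke Riemann--Roch for a nodal (hence Gorenstein) curve: $\chi(X,L)=\deg L - g + 1 = d-g+1$. Hence proving the lemma is equivalent to showing $H^1(X,L)=0$. By Serre duality on $X$, this is in turn equivalent to showing that $\omega_X\otimes L^{-1}$ has no nonzero global sections. Both reformulations are convenient: the first is what we will carry through the induction, and the second is a handy contradiction tool in the base case.

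Next I would induct on $\gamma$, the number of irreducible components of $X$. In the base case $\gamma=1$, the curve is irreducible of arithmetic genus $g$, the hypothesis gives $\deg L \geq 2g-2$, and the usual argument via Serre duality on an irreducible Gorenstein curve (together with the fact that a torsion-free rank one sheaf of negative, or strictly subcanonical, degree has no sections) delivers $H^1(X,L)=0$. For the inductive step I would pick an irreducible component $Y\subset X$ and let $Z=\overline{X\setminus Y}$, $\Delta=Y\cap Z$. The Mayer--Vietoris sequence
\begin{equation*}
0\to L\to L|_Y\oplus L|_Z\to L|_\Delta\to 0
\end{equation*}
yields, after taking cohomology, the exact sequence
\begin{equation*}
H^0(Y,L|_Y)\oplus H^0(Z,L|_Z)\stackrel{r}{\to} L|_\Delta\to H^1(X,L)\to H^1(Y,L|_Y)\oplus H^1(Z,L|_Z)\to 0.
\end{equation*}
Every connected subcurve of $Y$ (resp.\ of $Z$) is a connected subcurve of $X$, so the hypothesis is hereditary for $L|_Y$ and $L|_Z$. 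Since $Y$ and $Z$ have strictly fewer irreducible components than $X$, the inductive hypothesis gives $H^1(Y,L|_Y)=H^1(Z,L|_Z)=0$. It therefore suffices to show that the restriction map $r$ is surjective onto the skyscraper sheaf $L|_\Delta=\bigoplus_{p\in\Delta}L_p$.

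The main obstacle, and the step I expect to be technically heaviest, is precisely the surjectivity of $r$ at each node $p\in\Delta$. The natural route is to apply (a relative form of) Lemma~\ref{basicpropgen}: the hypothesis $\deg_W L\geq 2g-2$ is strong enough that on $Y$ (and on $Z$) the twisted bundles $L|_Y(-p)$ still satisfy an inequality of the shape $\deg_W L|_Y(-p)\geq 2g_W-1$ for every connected $W\subseteq Y$, so that $H^1(Y,L|_Y(-p))=0$. Comparing the long exact sequences for $L|_Y$ and $L|_Y(-p)$ then shows that $p$ is not a base point of $L|_Y$, hence a section of $L|_Y$ (or of $L|_Z$) can realize any prescribed value at $p$. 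This will force $r$ to be surjective componentwise, hence surjective, and complete the induction. If in the inductive step the hypothesis $\deg_W L\geq 2g-2$ fails to pass the needed twisting, I would fall back on the Serre dual formulation: a nonzero $s\in H^0(X,\omega_X\otimes L^{-1})$ restricted to the maximal subcurve $Y$ on which $s$ is not identically zero gives (via adjunction $\omega_X|_Y(-\Delta)=\omega_Y$) a nonzero element of $H^0(Y,\omega_Y\otimes L|_Y^{-1})=H^1(Y,L|_Y)^\vee$, contradicting the inductive hypothesis applied to $Y$.
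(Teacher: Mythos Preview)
The paper does not prove this lemma; it is quoted from Caporaso's preprint \cite{cliff} and then used as a black box to establish Lemma~\ref{basicpropgen}. There is therefore no proof in the present paper to compare your attempt against.

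On the substance of your proposal, two points. First, invoking Lemma~\ref{basicpropgen} (even ``a relative form'') to get surjectivity of the restriction map $r$ is circular in this paper's logic, since Lemma~\ref{222} is precisely the input used to prove Lemma~\ref{basicpropgen}. Second, your ``fallback'' Serre-dual argument is the correct route and in fact makes the Mayer--Vietoris scaffolding unnecessary: if $0\ne s\in H^0(X,\omega_X\otimes L^{-1})$, let $Y$ be a connected component of the union of irreducible components on which $s$ is not identically zero; then $s|_Y$ vanishes along $Y\cap\overline{X\setminus Y}$ and hence, via $\omega_X|_Y=\omega_Y(Y\cap\overline{X\setminus Y})$, yields a section of $\omega_Y\otimes L|_Y^{-1}$ that is nonzero on \emph{every} irreducible component of $Y$. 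This forces $\deg_Y L\le 2g_Y-2$, contradicting the hypothesis applied to $Z=Y$. No induction on the number of components is needed.

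Finally, note that the printed hypothesis ``$\deg_Z L\ge 2g-2$'' (with $g=g(X)$ fixed) appears to be a misprint: taken literally the statement already fails for $X$ irreducible and $L=\omega_X$, which is exactly the soft spot in your base case (``strictly subcanonical'' does not cover degree $0$). The intended bound is presumably in terms of $g_Z$ and strict, consistent with how the lemma is actually applied in the proof of Lemma~\ref{basicpropgen}.
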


\begin{proof}[Proof (of Lemma \ref{basicpropgen})]
The first assertion follows immediatly  by Serre duality and by Lemma \ref{222}.

Now, assume that, for every $Z\subseteq X$, $\deg_ZL\geq 2g$. We must show that $L$ has no base points. 
Consider a closed $k$-rational point $x$ in $X$.
Suppose that $x$ is a nonsingular point of $X$. We must show that
$$h^0(X,L(-x))<h^0(X,L).$$
By our assumption on $L$, we can apply again Lemma \ref{222} to $L(-x)$ to get that $h^0(X,L(-x))=d-1-g+1=h^0(X,L)-1$.

Now, suppose $x$ is a nodal point of $X$. We must show that
$$H^0(X,L\otimes \mathcal I_x)\subsetneq H^0(X,L).$$
By contradiction, suppose these are equal. Then, if $\nu:Y\rightarrow X$ is the partial normalization of $X$ at $x$, we get that
$$H^0(X,L)= H^0(Y,\nu^*L(-p-q)),$$
where $p$ and $q$ are the preimages of $x$ by $\nu$.

Suppose that $x$ is not a disconnecting node for $X$. Then, it is easy to see that we can apply Lemma \ref{222} to $(Y, \nu^*L(-p-q))$. In fact, since $x$ is not a disconnecting node for $X$, if $Z'\subseteq Y$ contains $p$ and $q$, then $\deg_{Z'}\nu^*L(-p-q)\geq 2g_{Z'}-1$ because $g_{Z'}=g_Z-1$, where $Z\subseteq X$ is a subcurve of $X$ such that $Z'=\nu^{-1}(Z)$. So, we get that $h^0(Y,\nu^*L(-p-q))=(d-2)-(g-1)+1=d-g$, leading us to a contradiction.

Suppose now that $x$ is a disconnecting node for $X$. Then, $Y$ is the union of two connected curves $Y_1$ and $Y_2$ of genus $g_1$ and $g_2$, respectively, with $g_1+g_2=g$. Suppose that $p\in Y_1$ and $q\in Y_2$. Then, 
$$h^0(Y, \nu^*L(-p-q))=h^0(Y_1,\nu^*( L)_{|Y_1}(-p))+h^0(Y_2,\nu^*( L)_{|Y_2}(-q)).$$
Also in this case, we can apply Lemma \ref{222} to $(Y_i,h^0(Y_1,\nu^*( L)_{|Y_1}(-p)))$ and to $(Y_2,h^0(Y_1,\nu^*( L)_{|Y_2}(-q)))$. We get that 
$$h^0(Y, \nu^*L(-p-q))=(\deg_{Y_1}(\nu^*L)-1-g_1+1)+(\deg_{Y_2}(\nu^*L)-1-g_2+1)=d-g,$$
a contradiction.
\end{proof}

\begin{cor}\label{basicdualizing}
Let $X$ be an $n$-pointed quasistable curve of genus $g$ with $2g-2+n>0$ and let $M$ be either $\omega_X(p_1+\dots +p_n)$ or $\omega_X(p_1+\dots + p_{n-1})$ (if $n>0$), where $p_1,\dots, p_n$ are the $n$ marked points of $X$. Then, for all $m\geq 2$, we have that
\begin{enumerate}
 \item $H^1(X,M^m)=0$;
 \item $M^m$ is globally generated.
\end{enumerate}
\end{cor}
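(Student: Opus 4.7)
The plan is to apply Lemma \ref{basicpropgen} to $L = M^m$, which reduces both parts of the corollary to a single numerical estimate on every connected subcurve $Z \subseteq X$. By adjunction (\ref{omegaZ}),
$$\deg_Z M^m = m(2g_Z - 2 + k_Z + n_Z),$$
where $n_Z$ denotes the number of marked points appearing in $M$ that lie on $Z$. Base-point-freeness of $M^m$ requires the strict inequality $\deg_Z L > 2g_Z - 1$, equivalently $\deg_Z L \geq 2g_Z$; since this automatically implies the weaker bound $\deg_Z L \geq 2g_Z - 1$ and hence $H^1(X,L)=0$, it suffices to verify $\deg_Z M^m \geq 2g_Z$ for every connected $Z \subseteq X$.

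Because $m \geq 2$, the inequality is tightest at $m = 2$, where it simplifies to $g_Z + k_Z + n_Z \geq 2$. I would split into cases. When $Z = X$, one has $k_Z = 0$, and the bound $g + n_Z \geq 2$ holds by the standing hypothesis $g \geq 3$ of the paper (recall that $n \geq 1$ in the second variant). For a proper connected subcurve $Z \subsetneq X$, one has $k_Z \geq 1$; the only configuration in which $g_Z + k_Z + n_Z \geq 2$ is not automatic is $g_Z = 0$, $k_Z = 1$, i.e. when $Z$ is itself a rational tail.

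The main obstacle is therefore this rational-tail case, where I need $n_Z \geq 1$. I would argue combinatorially: the dual graph of $Z$ is a tree, with a unique foot component meeting $\overline{X\setminus Z}$. An irreducible component $E$ of $Z$ with $k_E^X = 1$ (the foot itself when $Z$ is irreducible, or any non-foot leaf of the tree otherwise) is a smooth rational curve that would be destabilizing if it carried at most one marked point; the definition of quasistability forbids this, since destabilizing components must be exceptional, and exceptional components may not lie in rational tails. Hence such an $E$, and therefore $Z$, contains at least two marked points of $X$. This yields $n_Z \geq 2$ in the first variant $M = \omega_X(p_1 + \cdots + p_n)$ and $n_Z \geq 1$ in the second variant $M = \omega_X(p_1 + \cdots + p_{n-1})$ (removing $p_n$ can kill at most one of the two marks). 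Plugging this back completes the verification, and Lemma \ref{basicpropgen} delivers both $H^1(X, M^m) = 0$ and the global generation of $M^m$ simultaneously.
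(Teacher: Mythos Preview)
Your proof is correct and follows essentially the same route as the paper's: reduce both conclusions to the single bound $\deg_Z M^m \geq 2g_Z$ via Lemma~\ref{basicpropgen}, check it at $m=2$, and isolate the rational-tail case as the only nontrivial one. The paper's proof simply asserts that a rational tail of a quasistable pointed curve carries at least two marked points (hence at least one of $p_1,\dots,p_{n-1}$), whereas you supply the combinatorial justification by locating a leaf component $E$ with $k_E=1$ and invoking conditions (1)--(2) of Definition~\ref{quasistabledef}; this extra detail is welcome but does not change the argument.
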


\begin{proof}
According to Lemma \ref{basicpropgen}, it is enough to show that, given a subcurve $Z$ of $X$, $\deg_ZM^m\geq 2g_Z$, for all $m\geq 2$. It is sufficient to prove the result for $m=2$.

Let $Z$ be a subcurve of $X$. Then,
$$\deg_Z\omega_X=2g_Z-2+k_Z.$$
Then,
$$\deg_Z(M^2)\geq 4g_Z-4+2k_Z=(2g_Z)+(2g_Z-4+2k_Z).$$ 
Now, if $Z$ is not a rational tail of $X$, $2g_Z-4+2k_Z\geq 0$. Instead, if $Z=T$ is a rational tail of $X$, since $X$ is quasistable, it must contain at least two of the $n$ marked points and, if $n>0$, at least one of these is among $p_1,\dots,p_{n-1}$. So, $\deg_TM^2\geq 2(g_Z-2+k_Z+1)=0=2g_Z$ and the result follows.

\end{proof}

\begin{cor} \label{basicprop}
Let $d>>0$, $n>0$ and $X$ an $n$-pointed quasistable curve of genus $g\geq 2$ endowed with a balanced line bundle $L$ of degree $d$. Denote by $p_1,\dots, p_{n}$ the $n$ marked points of $X$ and let $M$ be the line bundle $L(p_1+\dots +p_{n-1})\otimes (\omega_X(p_1+\dots+p_{n-1}))^{-k}$, for any $k\leq 1$. Then, we have that, for all $m\ge 1$,
\begin{enumerate}
\item $H^1(X,M^{m})=0$;
\item $M^{m}$ is globally generated.
\end{enumerate}
\end{cor}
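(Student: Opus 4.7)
The plan is to apply Lemma \ref{basicpropgen}: both (1) and (2) will follow from $\deg_Z M^m \geq 2g_Z$ for every connected subcurve $Z \subseteq X$ and every $m \geq 1$. Since $\deg_Z M^m = m \deg_Z M$ with $m \geq 1$ and $g_Z \geq 0$, this reduces at once to checking $\deg_Z M \geq 2g_Z$ for every $Z$. Writing $s_Z$ for the number of indices $i \in \{1,\dots,n-1\}$ with $p_i \in Z$ and $w_Z = 2g_Z - 2 + k_Z = \deg_Z \omega_X$, an expansion yields
\begin{equation*}
\deg_Z M = \deg_Z L + s_Z - k(w_Z + s_Z),
\end{equation*}
and a preliminary stability count shows $w_Z + s_Z \geq 0$ for every connected $Z$: this is automatic once $g_Z \geq 1$ or $k_Z \geq 2$, and when $Z$ sits inside a rational tail (so $g_Z = 0$, $k_Z = 1$) the fact that no rational component of $Z$ is exceptional forces the marked-point count on $Z$ to be large enough, even after excluding $p_n$. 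Consequently the right-hand side of the target
\begin{equation*}
\deg_Z L \geq 2g_Z - s_Z + k(w_Z + s_Z)
\end{equation*}
is non-decreasing in $k$ throughout the range $k \leq 1$, so it suffices to verify it at the extremal value $k = 1$, where it reads $\deg_Z L \geq 2g_Z + w_Z = 4g_Z - 2 + k_Z$.

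This inequality is checked case by case. If $Z$ is contained in a rational tail, Lemma \ref{combdesc} gives $\deg_Z L = k_Z - 2 = 2g_Z + w_Z$ and the bound holds with equality. If $Z$ is contained in a rational bridge, Lemma \ref{combdesc} gives $\deg_Z L \in \{k_Z - 2, k_Z - 1\}$ and the bound holds. If $Z$ meets neither, a short argument shows $w_Z \geq 1$ (otherwise $Z$ itself would be an exceptional component or a rational bridge), and I combine the balanced upper bound $M_{Z'}$ of Definition \ref{balanced} on the complementary subcurve $Z' = \o{X \setminus Z}$ with the relation $\deg_Z L = d - \deg_{Z'} L$ and the identity $w_Z + w_{Z'} = 2g - 2$; rearranging yields
\begin{equation*}
\deg_Z L \geq \frac{d(w_Z + t_{Z'})}{2g-2} + O(1),
\end{equation*}
which is linear in $d$ with positive leading coefficient (since $w_Z \geq 1$), and so exceeds $4g_Z - 2 + k_Z$ once $d$ is sufficiently large.

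The main obstacle is the preliminary bound $w_Z + s_Z \geq 0$ in the case where $Z$ lives inside a rational tail or bridge: one must enumerate the components of $Z$ and apply stability to each non-exceptional rational component, keeping track of the at most one exceptional component a rational bridge may contain and of the single omitted marked point $p_n$. Once this combinatorial step is in place, the rest of the argument is driven by Lemma \ref{combdesc} in the boundary cases and by the complementary balanced bound in the generic case.
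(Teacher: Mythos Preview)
Your overall strategy coincides with the paper's: reduce to $m=1$ via Lemma~\ref{basicpropgen}, then split according to whether $Z$ lies in a rational tail or bridge (handled by Lemma~\ref{combdesc}) or not (handled by the balanced inequality, which gives a lower bound on $\deg_Z L$ linear in $d$ with positive leading coefficient). Your extra reduction to $k=1$ via the observation $w_Z+s_Z\ge 0$ is a clean way to absorb the twist by $\omega_X(p_1+\dots+p_{n-1})^{-k}$; the paper instead argues more loosely that $\deg_Z M$ differs from $\deg_Z L$ by a quantity independent of $d$.

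There is, however, a gap in your generic case. You apply the upper bound $M_{Z'}$ of Definition~\ref{balanced} to the complement $Z'=\overline{X\setminus Z}$, but inequality~(\ref{basic}) is only asserted for \emph{connected} proper subcurves not contained in any rational tail or bridge, and $Z'$ need satisfy neither hypothesis: it may well be disconnected, and some of its connected components may themselves be rational tails of $X$ (indeed there are exactly $t_Z$ such components). So the bound $M_{Z'}$ is not available as stated, and your displayed coefficient $w_Z+t_{Z'}$ is not justified. The fix is immediate and is precisely what the paper does: since $Z$ itself is connected and not contained in a rational tail or bridge, apply the \emph{lower} bound $m_Z(d,L)$ of Definition~\ref{balanced} directly to $Z$, obtaining $\deg_Z L\ge \dfrac{d(w_Z-t_Z)}{2g-2}+O(1)$. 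One then checks $w_Z-t_Z\ge 1$ by a short case analysis on $g_Z$ (for $g_Z=0$ one shows $k_Z-t_Z\ge 3$, for $g_Z=1$ that $k_Z-t_Z\ge 1$, and for $g_Z\ge 2$ one uses $t_Z\le k_Z$); hence the coefficient of $d$ is positive and the required bound $\deg_Z L\ge 4g_Z-2+k_Z$ follows for $d\gg 0$.
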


\begin{proof}

Again, accordingly to Lemma \ref{basicpropgen}, the result follows if we prove that, for every subcurve $Z$ of $X$, $\deg_ZM^m\geq 2g_Z$. It is enough to prove the result for $m=1$.

Let $Z$ be a subcurve of $X$ which is not contained in any rational tail or in any rational bridge of $X$. By definition of balanced (see \ref{balanced} above), we have that
$$\deg_ZL= \frac{d}{2g-2}(w_Z-t_Z)+i_{X,Z}=\frac{d}{2g-2}(2g_Z-2+k_Z-t_Z)+i_{X,Z},$$
where $i_{X,Z}$ is independent of $d$.

Take $d>>0$.
Now, if $Z$ is rational, $k_Z-t_Z\geq 3$, so $\deg_ZL>>0$. In fact, if $k_Z=t_Z$, $X$ would be rational, which is impossible; by the other hand, if $k_Z-t_Z$ is $1$ or $2$, $Z$ should be contained in a rational tail or in a rational bridge of $X$, respectively, which cannot be the case by our assumption on $Z$.

Suppose now that $g_Z=1$. Then, $k_Z-t_Z\geq 1$, since otherwise $X$ would have genus $1$, which is impossible as well. So, also in this case, $\deg_ZL>>0$.

Finally, if $g_Z\geq 2$, it follows immediately that $\deg_ZL>>0$ and the same holds for $\deg_ZM$, which is asymptotically equal to $\deg_ZL$ since $d>>0$. 

Suppose now that $Z$ is contained in a rational tail or in a rational bridge of $X$. Then, from Lemma \ref{combdesc}, we have that $\deg_ZM\geq\deg_ZL\geq k_Z-2\geq 0=2g_Z$. Finally, if $k_Z=1$, $\deg_ZL=-1=w_Z$ and $Z$ has at least $2$ marked points of $X$. So, $\deg_ZL-\deg_Z\omega_Z=0$ and $\deg_ZL(p_1+\dots+p_n)>0=2g_Z$.
\end{proof}

\subsection{Normal generation}

Recall the following definition.

\begin{defi}\label{normgen}
A coherent sheaf $\mathcal F$ on a scheme $X$ is said to be \textbf{normally generated} if, for all $m\ge 1$, the canonical map
$$H^0(X,\mathcal F)^{m}\rightarrow H^0(X,\mathcal F^{m})$$
is surjective.
\end{defi}

Note that if we take $\mathcal F$ to be an ample line bundle $L$ on $X$, then if $L$ is normally generated it is, indeed, very ample (see \cite{mumfordlemma}, section 1). In this case, saying that $L$ is normally generated is equivalent to say that the embedding of $X$ via $L$ on $\mathbb P^N$, for $N=h^0(X,L)-1$, is projectively normal.

Normal generation of line bundles on curves has been widely studied. For instance, we have the following theorem of Mumford:

\begin{teo}[Mumford, \cite{mumfordlemma}, Theorem 6] \label{mumfordteo}
Let $X$ be a nonsingular irreducible curve of genus $g$. Then, any line bundle of degree $d\geq 2g+1$ is normally generated.
\end{teo}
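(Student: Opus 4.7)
The plan is to prove by induction on $m$ that the multiplication map $\mu_m\colon H^0(L)^{\otimes m}\to H^0(L^m)$ is surjective for every $m\geq 1$; the case $m=1$ is trivial, and the inductive step reduces to showing that
\begin{equation*}
H^0(L)\otimes H^0(L^{m-1})\to H^0(L^m)
\end{equation*}
is surjective for all $m\geq 2$.

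First I would observe that $d=\deg L\geq 2g+1$ implies $L$ is very ample (and in particular base-point-free) by Riemann-Roch: for any two points $p,q\in X$, $\deg L(-p-q)\geq 2g-1>2g-2$ forces $h^0(L(-p-q))=h^0(L)-2$. Hence one can pick a base-point-free pencil $W=\langle s_0,s_1\rangle\subseteq H^0(L)$, which fits in the ``Koszul'' sequence
\begin{equation*}
0\to L^{-1}\to W\otimes \mathcal O_X \to L\to 0.
\end{equation*}
Tensoring with $L^{m-1}$ and taking cohomology yields
\begin{equation*}
W\otimes H^0(L^{m-1}) \to H^0(L^m) \to H^1(L^{m-2}).
\end{equation*}
For $m\geq 3$, $\deg L^{m-2}\geq 2g+1>2g-2$, so $H^1(L^{m-2})=0$ by Serre duality and $H^0(L)\otimes H^0(L^{m-1})\to H^0(L^m)$ is surjective.

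The main obstacle is the base case $m=2$: the same sequence reads
\begin{equation*}
0\to H^0(\mathcal O_X)\to W\otimes H^0(L)\to H^0(L^2)\to H^1(\mathcal O_X)\to 0,
\end{equation*}
so $W\cdot H^0(L)$ has codimension $g$ in $H^0(L^2)$ and one needs to recover this missing $g$-dimensional obstruction using sections outside $W$. I would close the gap via the ``kernel bundle'' $M_L:=\ker(H^0(L)\otimes\mathcal O_X\twoheadrightarrow L)$ (of rank $d-g$ and degree $-d$): tensoring the defining sequence with $L$ and using $H^1(L)=0$ identifies $\operatorname{coker}(\mu_2)$ with $H^1(M_L\otimes L)$. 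The inclusion $W\hookrightarrow H^0(L)$, combined with the snake lemma, produces the extension
\begin{equation*}
0\to L^{-1}\to M_L\to (H^0(L)/W)\otimes\mathcal O_X\to 0,
\end{equation*}
and after tensoring with $L$ the vanishing $H^1(M_L\otimes L)=0$ reduces to the surjectivity of a connecting map $(H^0(L)/W)\otimes H^0(L)\to H^1(\mathcal O_X)$.

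This last surjectivity, which is the crux of the argument, is verified by identifying the map---via Serre duality $H^1(\mathcal O_X)\cong H^0(\omega_X)^*$---with a pairing on $H^0(\omega_X)$ that is non-degenerate precisely because $|L|$, being very ample of degree $\geq 2g+1$, separates enough points to detect every non-zero holomorphic differential. Combining this $m=2$ step with the base-point-free pencil argument for $m\geq 3$ completes the induction and yields the normal generation of $L$.
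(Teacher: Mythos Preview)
The paper does not actually prove this theorem: it is quoted from Mumford's \cite{mumfordlemma}, and the only indication given is that ``Mumford's proof of Theorem~\ref{mumfordteo} is based on'' the Generalized Lemma of Castelnuovo (Lemma~\ref{castelnuovo}). So there is no detailed argument in the paper to compare against, only the pointer that Castelnuovo's lemma is the essential tool.

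Your treatment of $m\geq 3$ is correct and is precisely what Castelnuovo's lemma yields with $N=L$ and $F=L^{m-1}$: the hypothesis $H^1(L^{m-2})=0$ holds since $\deg L^{m-2}>2g-2$. The base-point-free pencil trick you invoke is in fact the mechanism underlying the curve case of that lemma, so here your approach and the one the paper points to coincide.

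The case $m=2$, however, has a genuine gap. Your reduction of $\operatorname{coker}\mu_2$ to $H^1(M_L\otimes L)$ is correct, and the filtration $0\to L^{-1}\to M_L\to (H^0(L)/W)\otimes\mathcal O_X\to 0$ is correctly obtained; but the surjectivity of the resulting connecting map
\[
\delta\colon (H^0(L)/W)\otimes H^0(L)\longrightarrow H^1(\mathcal O_X)
\]
is \emph{equivalent} to the surjectivity of $\mu_2$ (both measure exactly $H^1(M_L\otimes L)$), so you have reformulated the problem rather than reduced it. Your final sentence appeals to Serre duality and an unspecified ``non-degenerate pairing'' detected by very ampleness, but you never identify $\delta$ concretely, and very ampleness of $|L|$ alone does not visibly force the dual map $H^0(\omega_X)\to\bigl[(H^0(L)/W)\otimes H^0(L)\bigr]^*$ to be injective. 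This is exactly the place where the real work sits: in Mumford's argument the quadratic step is handled by a further, more delicate application of Castelnuovo's lemma after introducing an auxiliary divisor, and some such device is still missing from your proposal.
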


Mumford's proof of Theorem \ref{mumfordteo} is based on the following Lemma.

\begin{lem}[Generalized Lemma of Castelnuovo, \cite{mumfordlemma}, Theorem 2]\label{castelnuovo}
Let $N$ be a globally generated invertible sheaf on a complete scheme $X$ of finite type over $k$ and $F$ a coherent sheaf on $X$ such that
$$H^i(X,F\otimes N^{-i})=0 \mbox{ for } i\geq 1.$$
Then,
\begin{enumerate}
 \item $H^i(X,F\otimes N^j)=0$ for $i+j\geq 0$, $i\geq 1$.

 \item the natural map
$$H^0(X,F\otimes N^i)\otimes H^0(X,N)\rightarrow H^0(X,F\otimes N^{i+1})$$
is surjective for $i\geq 0$.
\end{enumerate}
\end{lem}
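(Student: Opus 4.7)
This is the classical Castelnuovo–Mumford regularity lemma, phrased with respect to a globally generated line bundle: the hypothesis $H^i(X,F\otimes N^{-i})=0$ for $i\geq 1$ says exactly that $F$ is $0$-regular with respect to $N$, and we must upgrade this to the higher vanishing in (1) and to the surjectivity of multiplication in (2). My plan is to set $V:=H^0(X,N)$ (a finite-dimensional $k$-vector space because $X$ is complete) and to exploit the tautological short exact sequence
$$0 \to \mathcal{K} \to V\otimes_k \mathcal{O}_X \to N \to 0 \qquad (\dagger)$$
coming from global generation, where $\mathcal{K}$ is the kernel of the evaluation map. Because $N$ is a line bundle, tensoring $(\dagger)$ with $F\otimes N^{j-1}$ preserves exactness for every $j$, and the associated long exact sequence contains the fragment
$$V\otimes H^i(X,F\otimes N^{j-1})\;\to\;H^i(X,F\otimes N^j)\;\to\;H^{i+1}(X,\mathcal{K}\otimes F\otimes N^{j-1}). \qquad (\star)$$

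For part (1), I would fix $i\geq 1$ and induct on $j$, starting from $j=-i$, which is exactly the hypothesis. The inductive step uses $(\star)$: its leftmost term vanishes by the previous case, so it suffices to kill the rightmost term, i.e.\ to show $H^{i+1}(X,\mathcal{K}\otimes F\otimes N^{j-1})=0$. This auxiliary vanishing is the crux: I would bootstrap by iterating $(\dagger)$, which is equivalent to invoking the Koszul complex on a basis of the finite-dimensional space $V$ to resolve $\mathcal{K}$. The upshot is that every cohomology group of $\mathcal{K}\otimes F\otimes N^\ell$ can be rewritten, via a finite chain of short exact sequences, in terms of cohomology groups $H^{i'}(X,F\otimes N^{\ell'})$ with $i'+\ell'\geq i+\ell$, so the hypothesis on $F$ together with the induction hypothesis in $(i,j)$ forces precisely the required vanishing. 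Once (1) is in hand, part (2) follows from the $i=0$ piece of the same long exact sequence, namely
$$V\otimes H^0(X,F\otimes N^i)\;\to\;H^0(X,F\otimes N^{i+1})\;\to\;H^1(X,\mathcal{K}\otimes F\otimes N^i),$$
in which the left-hand arrow is the multiplication map of the statement and the right-hand group vanishes by the same bookkeeping applied to $\mathcal{K}\otimes F$ (the relevant indices satisfy the right inequalities).

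The real obstacle, and the only nontrivial point, is the auxiliary vanishing for $\mathcal{K}\otimes F\otimes N^{j-1}$: it is not automatic that the kernel sheaf $\mathcal{K}$ inherits a convenient regularity condition from $F$, and this is what forces the iterated/Koszul device. The finite-dimensionality of $V$ (guaranteed by completeness of $X$) is precisely what makes the iteration terminate after finitely many steps, so no further geometric input is required. Once this bookkeeping is carried out cleanly, both (1) and (2) fall out in parallel from $(\star)$ by the same induction.
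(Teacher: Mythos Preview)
The paper does not give its own proof of this lemma: it is quoted from Mumford's \emph{Varieties defined by quadratic equations} (Theorem~2) and used as a black box, so there is nothing in the paper to compare against. Your outline is essentially Mumford's original argument via the evaluation sequence $(\dagger)$ and the associated Koszul resolution of $\mathcal{K}$, and it is correct.

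Two small points of bookkeeping are worth tightening. First, the induction cannot be purely ``fix $i$ and increase $j$'': when you unwind $H^{i+1}(\mathcal{K}\otimes F\otimes N^{j-1})$ through the Koszul resolution, the cohomology groups of $F$ that appear are $H^{i+q}(F\otimes N^{j-1-q})$ for $q\geq 1$, i.e.\ with strictly larger cohomological index $i'>i$ (and $i'+j'=i+j-1$, not $\geq i+j$ as you wrote). So one really needs a descending outer induction on $i$, anchored at $i>\dim X$ where all $H^i$ vanish, together with your inner induction on $j$; your phrase ``induction hypothesis in $(i,j)$'' suggests you had this in mind. Second, the iteration terminates either because $\wedge^{p}\mathcal{K}=0$ once $p\geq\dim V$ (your stated reason) or, more directly, because $H^{i+p}$ vanishes once $i+p>\dim X$; either suffices. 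With these clarifications your argument goes through and yields both (1) and (2) exactly as you describe.
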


The proof of the following statement uses mainly the arguments of Knudsen's proof of Theorem 1.8 in \cite{knudsen}.

\begin{prop}\label{normgengen}
Let $X$ be an $n$-pointed semistable curve of genus $g\geq 2$ and $L\in\Pic^d X$. If, for every subcurve $Z$ of $X$, $\deg_ZL\geq 2g$, then $L\otimes \omega_X(p_{1}+\dots + p_{n})$ is normally generated, where $p_{1}\dots,p_{n}$ are the marked points of $X$.
\end{prop}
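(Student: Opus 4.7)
Write $M:=L\otimes\omega_X(p_1+\dots+p_n)$. The plan is to reduce the normal generation of $M$, via the Generalized Lemma of Castelnuovo (Lemma \ref{castelnuovo}), to the single base case that $H^0(X,M)\otimes H^0(X,M)\to H^0(X,M^2)$ is surjective, and then to establish this case by a kernel-cohomology argument following Knudsen's proof of Theorem 1.8 in \cite{knudsen}.

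First, I would check the degree bounds needed to feed into Lemma \ref{basicpropgen}. For every connected subcurve $Z\subseteq X$, the adjunction formula gives $\deg_Z\omega_X=2g_Z-2+k_Z$, so
\[
\deg_Z M^{m}\;=\;m\bigl(\deg_Z L+2g_Z-2+k_Z+|\{i\colon p_i\in Z\}|\bigr)\;\geq\; m(2g+2g_Z-2+k_Z),
\]
using $\deg_Z L\geq 2g$. Semistability forces $2g_Z-2+k_Z+|\{i\colon p_i\in Z\}|\geq 0$ whenever $g_Z=0$ (and this quantity is $\geq 0$ automatically for $g_Z\geq 1$), so $\deg_Z M^{m}>2g_Z-1$ for all $m\geq 1$. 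By Lemma \ref{basicpropgen}, it follows that $M^{m}$ is globally generated and $H^1(X,M^{m})=0$ for every $m\geq 1$.

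Next I invoke Castelnuovo with $N:=M$ (globally generated by the previous step) and $F:=M^{2}$. The hypothesis $H^i(X,M^{2-i})=0$ for $i\geq 1$ is just $H^1(X,M)=0$, which we have. Conclusion (ii) of Lemma \ref{castelnuovo} gives surjectivity of
\[
H^0(X,M^{k})\otimes H^0(X,M)\;\longrightarrow\; H^0(X,M^{k+1})\qquad\text{for every }k\geq 2.
\]
An easy induction on $m$ then shows that $H^0(X,M)^{\otimes m}\to H^0(X,M^{m})$ is surjective for $m\geq 3$ provided one has the case $m=2$. So everything reduces to proving that $H^0(X,M)\otimes H^0(X,M)\to H^0(X,M^2)$ is surjective.

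For the base case $m=2$, which I expect to be the main obstacle, I would follow Knudsen's argument. Since $M$ is globally generated, there is an exact sequence
\[
0\;\longrightarrow\; K\;\longrightarrow\; H^0(X,M)\otimes\mathcal O_X\;\longrightarrow\; M\;\longrightarrow\; 0.
\]
Tensoring with $M$ and taking global sections, surjectivity of the multiplication map is equivalent to $H^1(X,K\otimes M)=0$. On a smooth curve this is straightforward since $K$ is locally free and Mumford's Theorem \ref{mumfordteo} applies (note $\deg M\geq 4g-2+n\geq 2g+1$). For a nodal $X$ the sheaf $K$ fails to be locally free at the nodes, and the subtlety is exactly to control the contribution of the nodes to $H^1(X,K\otimes M)$. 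The strategy, as in \cite{knudsen}, is to proceed by induction on the number of nodes: at each step, pass to the partial normalization $\nu\colon Y\to X$ at a node $x$, use the exact sequence
\[
0\;\longrightarrow\; \mathcal O_X\;\longrightarrow\; \nu_{\ast}\mathcal O_Y\;\longrightarrow\; k(x)\;\longrightarrow\; 0
\]
tensored with $K\otimes M$, and reduce the vanishing to an analogous vanishing on $Y$ (where the pulled-back degree hypotheses $\deg_{Z'}\nu^{*}L\geq 2g_Y$ persist for all subcurves $Z'\subseteq Y$, because inserting a node only lowers the arithmetic genus of a subcurve and leaves degrees unchanged). Iterating until $Y$ is smooth, Mumford's theorem concludes the argument, and unwinding gives $H^1(X,K\otimes M)=0$, completing the proof.
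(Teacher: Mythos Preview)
Your reduction to the single case $m=2$ via Lemma~\ref{castelnuovo} is correct and is exactly what the paper does. The discrepancy is entirely in how you handle the base case, and there your plan has a genuine gap.

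The kernel-bundle reformulation is fine: surjectivity of $H^0(M)\otimes H^0(M)\to H^0(M^2)$ is indeed equivalent to $H^1(X,K\otimes M)=0$. The problem is the proposed induction on nodes. Tensoring the normalization sequence $0\to\mathcal O_X\to\nu_*\mathcal O_Y\to k(x)\to 0$ with $K\otimes M$ does not obviously stay exact on the left, since $K$ is only torsion-free (not locally free) at the node $x$. More seriously, even granting exactness, what appears on $Y$ is $\nu^*K$, which is \emph{not} the kernel bundle for $\nu^*M$ on $Y$: the latter is built from $H^0(Y,\nu^*M)$, which strictly contains $H^0(X,M)$ whenever $x$ is non-disconnecting. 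So the vanishing you would need on $Y$ is not the inductive hypothesis, and there is no clean way to ``iterate until $Y$ is smooth'' and then invoke Mumford. This step, as written, does not go through.

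The paper (following Knudsen) proves the $m=2$ case by a completely different route. One first treats the case where $X$ has no disconnecting nodes: then $\omega_X$ itself is globally generated, and one factors $M=L(D)\otimes\omega_X(D)$ through the auxiliary product $H^0(L\otimes\omega_X(D))\otimes H^0(\omega_X)\otimes H^0(L(D))$, applying Castelnuovo twice, once with $N=\omega_X$ and once with $N=L(D)$, to force surjectivity of $\alpha$. The general case is then obtained by induction on the number of \emph{disconnecting} nodes only: cutting at such a node $x$ splits $X=X_1\cup X_2$, and one shows directly, by a section-gluing argument, that the image of $\alpha$ contains both a section nonvanishing at $x$ and all of $H^0(M^2\otimes\mathcal I_x)$. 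No kernel bundle and no full normalization are used.
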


\begin{proof}
Let $D$ denote the divisor $p_1+\dots+p_n$. Let $Z$ be a subcurve of $X$.
Since the multidegree of $\omega(D)$ is non-negative, $\deg_ZL\otimes\omega(D)\geq\deg_ZL\geq 2g_Z$, so both statements of Lemma \ref{basicpropgen} hold also for $L\otimes \omega(D)$. So, we can apply the generalized Lemma of Castelnuovo with $F=(L\otimes\omega_X(D))^m $ and $N=L\otimes \omega_X(D)$, for any $m>1$, and get that the natural map
$$H^0(X,(L\otimes\omega_X(D))^m)\otimes H^0(X,L\otimes\omega_X(D))\rightarrow H^0(X,(L\otimes\omega_X(D))^{m+1})$$
is surjective. So, to prove that $L\otimes \omega_X(D)$ is normally generated, it remains to show that the map
\begin{equation*}
H^0(X,L\otimes\omega_X(D))\otimes H^0(X,L\otimes\omega_X(D))\stackrel{\alpha}{\rightarrow} H^0(X,(L\otimes\omega_X(D))^2)
\end{equation*}
is surjective.

Start by assuming that $X$ has no disconnecting nodes and consider the following commutative diagram
\begin{equation*}
\xymatrix{
{\Gamma(L\otimes\omega_X(D))\otimes \Gamma(\omega_X)\otimes \Gamma(L(D)) } \ar[d]_{\beta} \ar[r] & { \Gamma(L\otimes\omega_X(D))\otimes \Gamma(L\otimes\omega_X(D))} \ar[d]^{\alpha}\\
{\Gamma(L\otimes\omega_X^2(D)) \otimes \Gamma(L(D))} \ar[r]^{\gamma}  & {\Gamma((L\otimes\omega_X(D))^2)}
}
\end{equation*}
where $\Gamma(-)$ indicates $H^0(X,-)$.

Then, from the proof of Theorem 1.8 in Knudsen we have that, since $g\geq 2$ and $X$ has no disconnecting nodes, $H^0(X,\omega_X)$ is globally generated.

Moreover, from Lemma \ref{basicpropgen} applied to $L(D)$, we get that $H^1((L\otimes\omega_X(D)) \otimes\omega_X^{-1})=H^1(L(D))=0$. So, we can apply the Generalized Lemma of Castelnuovo with $F=L\otimes\omega_X(D)$ and $N=\omega_X$ to conclude that $\beta$ is surjective.

Now, since $X$ has no disconnecting nodes, it cannot have rational tails. So, we can see $X$ as a semistable curve without marked points and apply Corollary \ref{basicdualizing} to $X$ and $\omega_X$ and get that $H^1((L\otimes\omega_X^2(D))\otimes (L(D))^{-1})=H^1(\omega_X^2)=0$. Since $L(D)$ is globally generated, again by Lemma \ref{basicpropgen}, we can apply the Generalized Lemma of Castelnuovo with $F=L\otimes \omega_X^2(D)$ and $N=L(D)$ to conclude that also $\gamma$ is surjective.

Since the above diagram is commutative, it follows that also $\alpha$ is surjective and we conclude.

Now, to show that $\alpha$ is surjective in general, let us argue by induction in the number of disconnecting nodes of $X$.

Let $x$ be a disconnecting node of $X$ and $X_1$ and $X_2$ the subcurves of $X$ such that $\{x\}=X_1\cap X_2$.

The surjectivity of $\alpha$ follows if we can prove the folowing two statements.
\begin{enumerate}
 \item The image of $\alpha$ contains sections a section
$s\in H^0(X,(L\otimes\omega_X(D)^2))$ such that $s(x)\neq 0$;
 \item The image of $\alpha$ contains $H^0(X,(L\otimes \omega_X(D))^2\otimes \mathcal I_{x}))$.
\end{enumerate}

The first statement follows immediately from the fact that $L\otimes \omega_X(D)$ is globally generated (once more by \ref{basicpropgen}).

Let $M$ denote $L\otimes \omega_X(D)$.
To prove $(2)$ let us consider $\sigma\in H^0(X,M^2\otimes \mathcal I_{x})$. Then, $\sigma=\sigma_1+\sigma_2$, with
$$\sigma_1\in H^0(X,M^2\otimes\mathcal I_{X_1})\cong H^0(X_2,(M^2\otimes\mathcal I_{X_1})_{|X_2}),$$
$$\sigma_2\in H^0(X,M^2\otimes\mathcal I_{X_2})\cong H^0(X_1,(M^2\otimes\mathcal I_{X_2})_{|X_1}).$$
By induction hypothesis, $\sigma_1$ is in the image of
$$H^0(X_2,M_{|X_2})\otimes H^0(X_2,M_{|X_2})\to H^0(X_2,(M^2)_{|X_2})$$
and $\sigma_2$ in the image of
$$H^0(X_1,M_{|X_1})\otimes H^0(X_1,M_{|X_1})\to H^0(X_1,(M^2)_{|X_1})$$
with both $\sigma_1$ and $\sigma_2$ vanishing on $x$.

Write $\sigma_1$ as $\sum_{l=1}^r u_l\otimes v_l$, with $u_l$ and $v_l$ in
$H^0(X_2,M_{|X_2})$, for $l=1,\dots ,r$. Let $\nu:Y\to X$ be the partial normalization of $X$ in $x$ and $p$ and $q$ be the preimages of $x$ on $X_1$ and $X_2$, respectively, via $\nu$. Since $M_{|X_1}$ is globally generated, there is $s\in H^0(X_1,M_{|X_1})$ with $s(p)\neq 0$. Then there are constants $a_l$ and $b_l$ for $l=1,\dots,r$ and $i=1,\dots,k$ such that
\begin{equation}\label{incol}
a_l s(p)=u_l(q) \mbox{ and }b_l s(p)=v_l(q).
\end{equation}
Define the sections $\bar u_l$ and $\bar v_l$ as $u_l$ (resp. $v_l$) on $X_2$ and as $a_ls$ (resp. $b_ls$) on $X_1$, for $l=1,\dots,r$. By (\ref{incol}), these are global sections of $M$ and
$$\sum_{l=1}^r\bar u_l\otimes \bar v_l$$ maps to $\sigma_1$. In fact,
$$\sigma_1(x)=\sum_{l=1}^ru_l(q)\otimes v_l(q)==\sum_{l=1}^r(a_ls(p)\otimes b_ls(p))=(\sum_{l=1}^r a_lb_l)s(p)\otimes s(p)$$
and, by hypothesis, $\sigma_1(x)=0$ and $s(p)\neq 0$. This implies that $\sum_l^r a_lb_l=0$, so $(\sum_{l=1}^r\bar u_l\otimes \bar v_l)_{|X_1}=0$. We conclude that $\sigma_1$ is in the image of $\alpha$.

In the same way, we would get that also $\sigma_2$ is in the image of $\alpha$, so $(2)$ holds and we are done.
\end{proof}

The next result follows from the proof of Theorem 1.8 in \cite{knudsen}, however we include it here since we shall use it in the following slightly more general form.

\begin{cor}\label{normgendualizing}
 Let $X$ be an $n$-pointed semistable curve of genus $g\geq 2$ and let $p_1\dots,p_n$ be the marked points of $X$. Then, for $m\geq 3$, $(\omega_X(p_1+\dots+p_n))^m$ is normally generated.
\end{cor}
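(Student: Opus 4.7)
The plan is to derive the corollary directly from Proposition \ref{normgengen}. Setting $D := p_1 + \cdots + p_n$ and $L := (\omega_X(D))^{m-1}$, we can write $(\omega_X(D))^m = L \otimes \omega_X(D)$. Thus, if we can verify that $\deg_Z L \geq 2g_Z$ for every connected subcurve $Z \subseteq X$ (the total curve included), Proposition \ref{normgengen} will immediately yield the claim.

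The verification reduces to a short case analysis based on $g_Z$, using the identity
$$\deg_Z L = (m-1)(2g_Z - 2 + k_Z + n_Z),$$
where $n_Z$ denotes the number of marked points lying on $Z$. If $g_Z \geq 2$, then since $k_Z + n_Z \geq 0$ we get $\deg_Z L \geq (m-1)(2g_Z - 2) \geq 2(2g_Z - 2) \geq 2g_Z$, using $m \geq 3$ and $g_Z \geq 2$. If $g_Z = 1$, then $Z$ must be a proper subcurve of $X$ (since $g \geq 2$), so $k_Z \geq 1$, giving $\deg_Z L \geq m - 1 \geq 2 = 2g_Z$. If $g_Z = 0$, semistability of $X$ forces $k_Z + n_Z \geq 2$, so $\deg_Z \omega_X(D) \geq 0$ and the bound $\deg_Z L \geq 0 = 2g_Z$ holds trivially. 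Finally, for $Z = X$ we compute $\deg_X L = (m-1)(2g - 2 + n) \geq 2(2g - 2 + n) \geq 2g$, where the last inequality reduces to $2g - 4 + 2n \geq 0$ which holds for $g \geq 2$, $n \geq 0$.

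With these degree inequalities established, Proposition \ref{normgengen} applies to $L$ and produces the normal generation of $L \otimes \omega_X(D) = (\omega_X(D))^m$, completing the proof. There is no substantial obstacle here: once one decides to bootstrap from the proposition, the argument is routine degree bookkeeping, with the only delicate point being that semistability (and not merely the genus bound $g \geq 2$) is what rules out rational subcurves with $k_Z + n_Z < 2$ in the $g_Z = 0$ case.
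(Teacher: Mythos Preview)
Your proof is correct and follows the same approach as the paper's: both reduce to Proposition~\ref{normgengen} by taking $L=(\omega_X(D))^{m-1}$ and verifying the degree hypothesis $\deg_Z L\ge 2g_Z$ on every connected subcurve. The only difference is that the paper cites Corollary~\ref{basicdualizing} for this degree bound, whereas you carry out the case analysis directly; your direct check is arguably cleaner here, since Corollary~\ref{basicdualizing} is stated for quasistable (not merely semistable) curves, and it is precisely your observation that semistability alone forces $k_Z+n_Z\ge 2$ when $g_Z=0$ which makes the bound go through in the stated generality.
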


\begin{proof}
Is an immediate consequence of Proposition \ref{normgengen} and Corollary \ref{basicdualizing}.
\end{proof}

\begin{cor}\label{normgenprop}
Let $X$ be an $n$-pointed quasistable curve of genus $g$ and $L$ a balanced line bundle on $X$ of degree $d>>0$. Then, if $n>0$, $L(p_1+\dots +p_{n-1})$ is normally generated.
\end{cor}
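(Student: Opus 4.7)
The plan is to imitate the proof of Proposition \ref{normgengen}, with $M:=L(D')$ (where $D':=p_1+\dots+p_{n-1}$) replacing $L\otimes\omega_X(D)$ throughout, and with the factorization $M=\omega_X\otimes(M\otimes\omega_X^{-1})$ replacing $L\otimes\omega_X(D)=\omega_X\otimes L(D)$. The first step is to reduce, via the Generalized Castelnuovo Lemma \ref{castelnuovo}, to showing the surjectivity of the single multiplication map $\alpha:H^0(M)\otimes H^0(M)\to H^0(M^2)$. This is legitimate: by Corollary \ref{basicprop} applied with $k=0$, $M^m$ is globally generated and satisfies $H^1(X,M^m)=0$ for every $m\geq 1$, so Castelnuovo with $F=M^2$ and $N=M$ yields surjectivity of $H^0(M^m)\otimes H^0(M)\to H^0(M^{m+1})$ for all $m\geq 2$.

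Next, I assume that $X$ has no disconnecting nodes, so in particular $X$ has no rational tails (rational bridges being still allowed), and form the commutative diagram
\begin{equation*}
\xymatrix{
\Gamma(M)\otimes\Gamma(\omega_X)\otimes\Gamma(M\otimes\omega_X^{-1}) \ar[r] \ar[d]_-{\beta} & \Gamma(M)\otimes\Gamma(M) \ar[d]^-{\alpha} \\
\Gamma(M\otimes\omega_X)\otimes\Gamma(M\otimes\omega_X^{-1}) \ar[r]^-{\gamma} & \Gamma(M^2)
}
\end{equation*}
where $\Gamma(-):=H^0(X,-)$ and all arrows are the natural multiplication maps. For $\beta$ I apply Castelnuovo with $F=M$ and $N=\omega_X$: $\omega_X$ is globally generated in the no-disconnecting-nodes setting (as used by Knudsen and recalled in the proof of Proposition \ref{normgengen}), while the required vanishing $H^1(M\otimes\omega_X^{-1})=0$ reduces via Lemma \ref{basicpropgen} to the inequality $\deg_Z(M\otimes\omega_X^{-1})\geq 2g_Z-1$ for every connected subcurve $Z$, which a routine case analysis based on Lemma \ref{combdesc} for rational bridges and the definition of balanced for the remaining subcurves, combined with $d\gg 0$, confirms. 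For $\gamma$ I apply Castelnuovo with $F=M\otimes\omega_X$ and $N=M\otimes\omega_X^{-1}$: the same degree estimate upgraded to $\deg_Z N\geq 2g_Z$ gives global generation of $N$, and the vanishing $H^1(F\otimes N^{-1})=H^1(\omega_X^2)=0$ follows from Corollary \ref{basicdualizing} with $n=0$ applied to the unpointed semistable curve $X$ (legitimate since no rational tails are present). Commutativity of the diagram then forces $\alpha$ to be surjective.

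For a general quasistable $X$, I reduce to the previous case by induction on the number of disconnecting nodes, following the end of the proof of Proposition \ref{normgengen}: at a disconnecting node $x$ with partial normalization $\nu:X_1\sqcup X_2\to X$ and $\nu^{-1}(x)=\{p,q\}$, any section of $H^0(M^2\otimes\mathcal I_x)$ splits as $\sigma_1+\sigma_2$ with $\sigma_i\in H^0(X_i,(M^2)_{|X_i}\otimes\mathcal I_{\nu^{-1}(x)\cap X_i})$, each $\sigma_i$ is expressed as a sum of products of restricted sections of $M$ by the inductive hypothesis, and the products are glued across the node using a section of $M_{|X_1}$ nonvanishing at $p$, itself supplied by the global generation of $M$ from Corollary \ref{basicprop}; combined with one section of $M^2$ nonvanishing at $x$, this shows $\alpha$ surjective on $H^0(M^2)$. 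The main obstacle is this last step: one has to verify that the restrictions of $L$ to the $X_i$ together with their induced markings still satisfy the degree conditions invoked in the no-disconnecting-nodes case, so that the inductive hypothesis is truly applicable. This amounts to a combinatorial check, via Lemma \ref{combdesc}, of how balanced multidegrees behave under normalization at a disconnecting node.
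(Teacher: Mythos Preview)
Your diagram and Castelnuovo applications are literally those already carried out in the proof of Proposition~\ref{normgengen}: your $M$ plays the role of $L\otimes\omega_X(D)$ there, and your $M\otimes\omega_X^{-1}$ plays the role of $L(D)$. The paper exploits this and does not repeat the argument; its proof of the corollary is two lines. First, $X$ endowed with only $p_1,\dots,p_{n-1}$ is an $(n-1)$-pointed \emph{semistable} curve. Second, by Corollary~\ref{basicprop} with $k=1$, the line bundle $L(p_1+\dots+p_{n-1})\otimes\bigl(\omega_X(p_1+\dots+p_{n-1})\bigr)^{-1}$ has degree at least $2g_Z$ on every connected $Z\subseteq X$, which is exactly the hypothesis of Proposition~\ref{normgengen}. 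Tensoring back with $\omega_X(p_1+\dots+p_{n-1})$ gives the conclusion.

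The reason the paper packages the argument as a separate proposition is precisely the point you flag as ``the main obstacle''. The induction on disconnecting nodes requires the inductive hypothesis for each piece $X_i$, and the degree-only hypothesis of Proposition~\ref{normgengen} is manifestly inherited (subcurves of $X_i$ are subcurves of $X$). By contrast, your inline version is inducting on a statement about \emph{balanced} line bundles on \emph{quasistable} pointed curves; after normalizing at a disconnecting node, $(X_i,L_{|X_i})$ is generally neither quasistable with a well-defined set of markings nor balanced in the sense of Definition~\ref{balanced}, so the inductive hypothesis as you have stated it does not apply. Your proposed fix---check the degree conditions directly on the $X_i$---amounts to replacing the inductive hypothesis by the one Proposition~\ref{normgengen} already uses. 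Until that reformulated statement is actually written down and proved, the induction does not close; so the gap is genuine, and the repair is exactly the abstraction the paper makes.
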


\begin{proof}

Since $X$ is an $n$-pointed quasistable curve, it is easy to see that $X$, endowed with the first $n-1$ marked points $p_1,\dots,p_{n-1})$, is an $(n-1)$-pointed semistable curve. Moreover, by Corollary \ref{basicprop}, we can apply Proposition \ref{normgengen} to $L(p_1+\dots+p_{n-1})\otimes\omega_X^{-1}(p_1+\dots+p_{n-1})$. The result follows immediately now.

\end{proof}

\begin{cor}\label{projnormal}
Let $d>>0$, $n>0$ and $X$ an $n$-pointed quasistable curve of genus $g\geq 2$ endowed with a balanced line bundle $L$. Let $M$ denote the line bundle $L(p_1+\dots+p_{n})$, where $p_1,\dots,p_n$ are the marked points of $X$. We have:
\begin{enumerate}
 \item $M$ is normally generated;
 \item $M$ is very ample.
\end{enumerate}

\end{cor}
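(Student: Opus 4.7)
The plan is to deduce both statements from Proposition~\ref{normgengen}, Corollary~\ref{basicprop} and Lemma~\ref{combdesc}, combined with the observation following Definition~\ref{normgen} that an ample normally generated line bundle is very ample. So the work divides into showing that $M$ is normally generated and showing that $M$ is ample; (2) will then follow from (1) plus ampleness.

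For (1), I would rewrite $M$ as $N\otimes\omega_X(p_1+\dots+p_n)$ with $N:=L\otimes\omega_X^{-1}$ and then apply Proposition~\ref{normgengen} (used in exactly the same way as in the proof of Corollary~\ref{normgenprop}, but now with all $n$ marked points instead of $n-1$). The one hypothesis to check is the degree bound on $N$ on every subcurve $Z\subseteq X$. I would split the verification into three cases, parallel to the structure of Lemma~\ref{combdesc}:
\begin{itemize}
\item if $Z$ is not contained in any rational tail nor in any rational bridge, then by the balanced condition $\deg_ZL$ is asymptotic to $d(w_Z-t_Z)/(2g-2)$, which is $\gg 0$ for $d\gg 0$, while $\deg_Z\omega_X=2g_Z-2+k_Z$ is fixed, so $\deg_ZN$ is as large as we like;
\item if $Z\subseteq T$ for a rational tail $T$, Lemma~\ref{combdesc} gives $\deg_ZL=k_Z-2=\deg_Z\omega_X$, hence $\deg_ZN=0=2g_Z$;
\item if $Z$ is contained in a rational bridge, Lemma~\ref{combdesc} gives $\deg_ZL\geq k_Z-2=\deg_Z\omega_X$, hence $\deg_ZN\geq 0=2g_Z$.
\end{itemize}
This is exactly the hypothesis needed to invoke Proposition~\ref{normgengen} (as in the proof of Corollary~\ref{normgenprop}).

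For ampleness (hence (2)), I would apply the Nakai--Moishezon criterion for curves, which reduces to $\deg_CM>0$ on every irreducible component $C$. For components not inside a rational tail or bridge, $\deg_CL\to\infty$ with $d$ and so $\deg_CM>0$ is immediate. For an irreducible component $C$ of a rational tail, the quasistable hypothesis forces at least one marked point on $C$ (otherwise $C$ would be destabilizing but not exceptional, contradicting the quasistable definition combined with the ``no exceptional component in a rational tail'' clause), so the $+p_i$ contributions compensate the $\deg_CL=k_C-2$ coming from Lemma~\ref{combdesc}; the same combinatorial check done in the bridge/tail analysis of Lemma~\ref{combdesc} shows the remaining degree is strictly positive. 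For a component $C$ of a rational bridge, either $C$ is exceptional and then $\deg_CL=1$, or $C$ is not exceptional and the quasistable/semistable conditions force at least one marked point (or rational tail containing a marked point) to be attached to $C$; in both situations $\deg_CM>0$.

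The main obstacle I anticipate is the bookkeeping on rational tails and bridges in the ampleness step: the degree of $L$ on individual components there is determined only up to the combinatorial freedom described in Lemma~\ref{combdesc} and one has to juggle this with the quasistability constraints on marked points. Once that is done, combining (1) with ampleness and the remark after Definition~\ref{normgen} closes (2).
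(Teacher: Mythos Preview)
Your proposal is correct and follows essentially the same route as the paper. For (1) the paper simply says that the proof of Corollary~\ref{normgenprop} ``obviously works for $M=L(p_1+\dots+p_n)$ as well,'' i.e.\ it applies Proposition~\ref{normgengen} after checking, via Corollary~\ref{basicprop} (with $k=1$), that $N=L\otimes\omega_X^{-1}$ satisfies the degree bound on every subcurve; your case split is just an explicit reworking of that same verification. For (2) the paper likewise asserts that $M$ has positive degree on every irreducible component (hence is ample) and then invokes the remark after Definition~\ref{normgen}; you spell out the component-by-component positivity check in more detail, but the argument is the same.
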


\begin{proof}
Statement $(1)$ follows from the proof of the previous proposition, which obviously works for $M=L(p_1+\dots+p_n)$ as well.

To show $(2)$ it is enough to observe that $M$ is ample since its degree on each irreducible component of $X$ is positive. Since $M$ is also normally generated, it follows that $M$ is indeed very ample (see \cite{mumfordlemma}, section 1).
\end{proof}

\section{The contraction functor}

The following definition generalizes the notion of contraction introduced by Knudsen in \cite{knudsen} to the more general case of pointed quasistable curves endowed with balanced line bundles.

\begin{defi}\label{contraction}
Let $(\pi:X\rightarrow S,s_i:S\rightarrow X,L)$ be an $(n+1)$-pointed curve endowed with a line bundle of relative degree $d$. A contraction of $X$ is an $S$-morphism from $X$ into an $n$-pointed  curve $(\pi':X'\rightarrow S,t_i:S\rightarrow X',L')$ endowed with a line bundle of relative degree $d$, $L'$, and with an extra section $\Delta:S\rightarrow X'$ such that
\begin{enumerate}
 \item  for $i=\,\dots,n$, the diagram
\begin{equation*}
\xymatrix{
{X} \ar[r]^{f} \ar[d]^{\pi} & {X'} \ar[dl]_{\pi'}\\
{S} \ar @{->} @/^/[u]^{s_i} \ar @{->} @/_/[ur]_{t_i}
}
\end{equation*}
commutes both in the upward and downward directions,
\item $\Delta=fs_{n+1}$,
\item $f$ induces an isomorphism between $L(s_1+\dots +s_n)$ and $f^*L'(t_1+\dots + t_n)$,
\item the morphism induced by $f$ in the geometric fibers $X_s$ is either an isomorphism or there is an irreducible rational component $E\subset X_s$ such that $s_{n+1}(s)\in E$ which is contracted by $f$ into a closed point $x\in X_s'$ and
$$f_s:X_s\sm E\rightarrow X'_s\sm \left\lbrace  x\right\rbrace$$
is an isomorphism.
\end{enumerate}
\end{defi}

\subsection{Properties of contractions}

\begin{prop}\label{quasistablecont}
Let $S=\spec k$ and $f:X\to X'$ a contraction from an $(n+1)$-pointed curve $(X,p_i,L)$, endowed with a degree $d$ line bundle $L$ into an $n$-pointed curve $(X',q_i,L')$, endowed with a degree $d$ line bundle $L'$ and with an extra point $r$. Then, if $(X,p_i)$ is quasistable, also $(X',q_i)$ is quasistable and, in this case, $L$ is balanced if and only if $L'$ is balanced.
\end{prop}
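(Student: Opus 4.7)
The argument will split into the two cases of Definition~\ref{contraction}(4): either $f$ is an isomorphism on the fibre, or $f$ contracts a single irreducible rational component $E\ni s_{n+1}$ onto a closed point $x\in X'$. The first step I would take is to extract the degree data on $E$: since $f|_E$ factors through $\{x\}$, both $f^{*}L'|_E$ and $f^{*}\mathcal{O}_{X'}(t_1+\dots+t_n)|_E$ are trivial, so condition~(3) of Definition~\ref{contraction} forces
\[
\deg_E L\;=\;-\mu,\qquad \mu:=\#\{\,i\le n : s_i\in E\,\}.
\]
Combining this with the inequality $k_E+\mu+1\ge 3$ (as $E$ cannot be exceptional in $X$, because $s_{n+1}\in E$) and with Lemma~\ref{combdesc}, I obtain a very short list of possibilities for the position of $E$ inside $X$: either a rational tail foot with $k_E=1,\,\mu=1,\,\deg_E L=-1$; a length-one degree-zero rational bridge with $k_E=2,\,\mu=0,\,\deg_E L=0$; or one of a small number of related configurations compatible with the basic inequality~(\ref{basic}).

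Next I would verify that $(X',q_i)$ is an $n$-pointed quasistable curve. If $f$ is an isomorphism, I identify $X'$ with $X$ and forget $s_{n+1}$: the only component whose (semi)stability can change is the one containing $s_{n+1}$, and when it does, quasistability of $X$ forces it to be rational with $k=2$ and with $s_{n+1}$ as its unique marked point, so that it is exceptional in $X'$. Conditions (2) and (3) of Definition~\ref{quasistabledef} survive because forgetting a marked point neither creates a new rational tail nor places a second exceptional component inside a rational bridge. If $f$ contracts $E$, I run through the cases above: contracting a rational tail foot simply replaces the pair $s_{n+1},s_i$ by the section $r=q_i$; contracting a length-one degree-zero rational bridge identifies its two attachment points. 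In each configuration one reads off directly that the destabilizing components of $X'$ are exceptional and that the conditions on rational tails and bridges still hold.

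Finally, for the equivalence $L$ balanced $\Leftrightarrow L'$ balanced, I use again condition~(3) of Definition~\ref{contraction}: on every component $C$ on which $f$ is an isomorphism one has $\deg_C L=\deg_{f(C)} L'$. Given a connected subcurve $Z\subseteq X$ not contained in any rational tail or bridge, the invariants $(w_{Z'},k_{Z'},t_{Z'},b_{Z'}^{L'})$ of its image $Z':=f(Z)\subseteq X'$ differ from $(w_Z,k_Z,t_Z,b_Z^L)$ by explicit adjustments that depend on whether $E\subseteq Z$, $E$ meets $Z$, or $E$ is disjoint from $Z$, and on which rational tails and bridges disappear or appear under $f$. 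The hardest part of the proof, I expect, is precisely this combinatorial bookkeeping: showing that the two sides of~(\ref{basic}) transform into each other across $f$. The plan is to mimic the pattern of Lemma~\ref{balcorresp}, which establishes an analogous bijection under the global contraction of all rational tails and degree-zero rational bridges; the local version needed here is a single step of that global process, and the very same arithmetic identities should carry through.
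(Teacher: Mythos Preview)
Your approach is essentially the paper's: split into the isomorphism case (which the paper dismisses in one line as trivial) versus the case where a component $E\ni p_{n+1}$ is contracted, and in the latter distinguish $r$ smooth ($k_E=1$, $\mu=1$) from $r$ nodal ($k_E=2$, $\mu=0$), then verify quasistability and compare the invariants $(w_Z,k_Z,t_Z,b_Z^L)$ across $f$ exactly as you plan. Your hedge about ``a small number of related configurations'' is unnecessary---since $X'$ is nodal one must have $k_E\le 2$, and combining this with $\deg_E L=-\mu$ and Lemma~\ref{combdesc} pins down precisely the two cases you already list, which is exactly the paper's case division.
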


\begin{proof}
Clearly, the assertion follows trivially if no irreducible component of $X$ gets contracted by $f$. So, assume that there is an irreducible component $E$ of $X$ that gets contracted by $f$.
Then, necessarily, $p_{n+1}\in E$, so no exceptional component of $X$ gets contracted. Moreover, the condition that $f^*L'(q_1+\dots+q_n) \cong L(p_1+\dots + p_n)$ implies that $L(p_1+\dots+p_n)$ is trivial on the fibers of $f$, so it must have degree $0$ on $E$.
Now, we have only two possibilities: either $r$ is a smooth point of $X'$ or it is nodal.

Start by considering the case when $r$ is smooth. Since $f(p_{n+1})=r=f(E)$, we must have that $k_E=1$.
So, if $X$ is quasistable, $E$ must contain exactly another special point $p_i$, for some $i=1,\dots,n$ and $r$ must be a smooth point of $X'$. Let $F'$ be the irreducible component of $X'$ containing $r$ and $F$ the correspondent irreducible component of $X$ (recall that $f$ establishes an isomorphism between $F$ and $F'$ away from $r$). If $g_F>0$, then it is clear that also $X'$ is quasistable. Instead, if $F$ is rational, even if $k_{F'}=k_F-1$, $F'$ has one more marked point than $F$. So, $X'$ has the same destabilizing and exceptional components than $X$ and it follows that $X'$ is a quasistable $n$-pointed curve.

Let us now check that, if we are contracting a rational tail of a quasistable curve, $L$ is balanced if and only if $L'$ is balanced. From the definition of contraction, we get that the multidegree of $L(p_1+\dots +p_n)$ in the irreducible components of $X$ that are not contracted must agree with the multidegree of $L'(q_1+\dots+q_n)$ in their images by $f$. In our case, this implies that the multidegree of $L'$ on the irreducible components of $X'$ coincides with the multidegree of $L$ on the corresponding irreducible components of $X$, except on $F'$, where we must have that
$$\deg_{F'}L'=\deg_FL-1.$$
So, given a proper subcurve $Z'$ of $X'$, if $Z'$ does not contain $r$, the balanced condition will be clearly satisfied by $L$ on $Z$ if and only if it is satisfied by $L'$ on $Z'$ since $m_{Z'}(d,L')=m_Z(d,L)$, $M_{Z'}(d,L')=M_Z(d,L)$ and $\deg_{Z'}(d,L')=\deg_Z(d,L)$. Now, suppose $r\in Z'$ and let $Z$ be the preimage of $Z'$ by $f$. Then, $k_{Z'}=k_Z-1$, $w_Z'=w_Z-1$, $b_Z^L=b_{Z'}^{L'}$ and $t_Z'=t_Z-1$, which implies that
$$m_{Z'}(d,L)=m_{Z}(d,L)-1$$
and
$$m_{Z'}(d,L')=m_Z(d,L)-1.$$
Since also $\deg_{F'}L'=\deg_FL-1$, we conclude that, if $L$ is balanced, then $L'$ is balanced too. Now, to conclude that the fact that $L'$ is balanced implies that also $L$ is balanced we have to further observe that the degree of $L$ on $E$ is forced to be $-1$ since $E$ contains $2$ special points and that the inequality (\ref{basic}) is verified on $\o{X\sm E}$ (that does not correspond to any proper subcurve of $X'$), which follows since $m_{\o{X\sm E}}(d,L)=M_{\o{X\sm E}}(d,L)=d+1=\deg_{\o{X\sm E}}L$.

\begin{figure}
\begin{center}
\scalebox{.6}{\epsfig{file=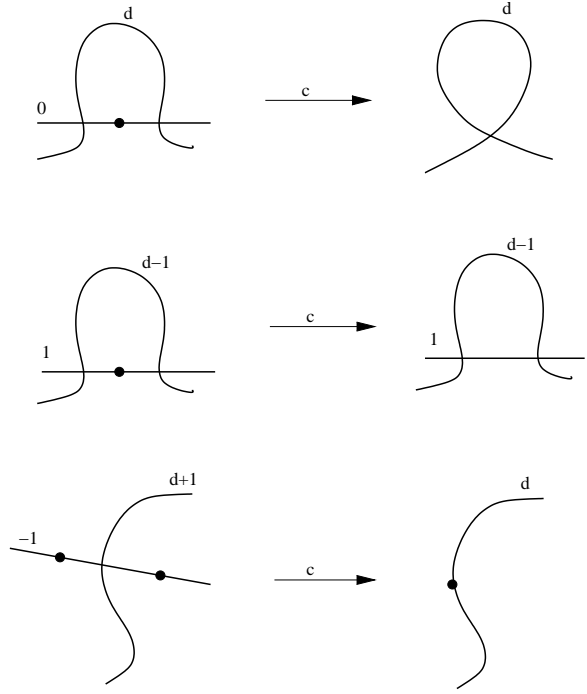}}
\end{center}
\caption{Contractions of quasistable pointed curves over $k$ and balanced degree $d$ line bundles.}
\label{excontraction}
\end{figure}

Now, suppose $r$ is a nodal point of $X$. Then, since $f(p_{n+1})=r=f(E)$, we must have that $k_E=2$ and that the only marked point of $X$ in $E$ is $p_{n+1}$ (otherwise, the condition that $f(p_i)=q_i$ for $i=1,\dots,n$ would imply that one of these $q_i$'s should coincide with $q_{n+1}$, which is nodal, and $(X',q_i)$ would not be a pointed curve).
We must do a further distinction here. Suppose first that $E$ intersects just one irreducible component of $X$: call it $F$ and $F'$ its associated irreducible component on $X'$. Now, if $X=E\cup F$, and if $F$ is rational, $X'$ is an irreducible genus $1$ curve, which is clearly quasistable. If, instead, $g_{F}>0$ or if $k_F\geq 3$, we see that all destabilizing and exceptional components of $X'$ correspond to destabilizing and exceptional components of $X$ and are contained in the same type of rational chains.

If, instead, $E$ intersects two distinct irreducible components of $X$, it is easy to see that, also in this case, all destabilizing and exceptional components of $X'$ correspond to destabilizing and exceptional components of $X$ and are contained in the same type of rational chains. So, $(X',q_i)$ will be quasistable if $(X,p_i)$ is.

Now, since $p_{n+1}$ is the only marked point in $E$, all irreducible components of $X'$ have the same marked points that the corresponding irreducible components of $X$, so
$f^*L'(q_1+\dots+q_n) \cong L(p_1+\dots + p_n)$ implies that the multidegree of $L$ on irreducible components of $X'$ coincides to the multidegree of $L$ on the corresponding irreducible components of $X$ and that the degree of $L$ on $E$ is zero. Let $Z'$ be a proper subcurve of $X'$ and $Z$ the corresponding proper subcurve of $X$. If $Z$ does not intersect $E$ or if it intersects $E$ in a single point, then it is immediate to see that inequality (\ref{basic}) holds for $L$ and $Z$ if and only if it holds for $L'$ and $Z'$. If, instead, $Z$ intersects $E$ in two points, then $g(Z')=g(Z)+1$, $t_{Z'}=t_Z$, $b_{Z'}^L=b_Z^L-1$ and $k_{Z'}=k_Z-2$, so, we get that
$$m_{Z'}(d,L)=m_{Z}(d,L)$$
and
$$m_{Z'}(d,L')=m_Z(d,L).$$
Since also $\deg_{Z'}L'=\deg_ZL$, we conclude that if we are contracting a rational bridge, if $L$ is balanced, also $L'$ will be balanced. Now, to conclude that the fact that $L'$ is balanced implies that also $L$ is balanced we have to further observe that, by definition of contraction, the degree of $L$ on $E$ is forced to be $0$ and that the inequality (\ref{basic}) is verified on $\o{X\sm E}$ (that does not correspond to any proper subcurve of $X'$), which is true since $m_{\o{X\sm E}}(d,L)=M_{\o{X\sm E}}(d,L)=d=\deg_{\o{X\sm E}}L$.

\end{proof}

The following lemma is Corollary 1.5 of \cite{knudsen}.

\begin{lem}\label{cor1.5}
Let $X$ and $Y$ be $S$-schemes and $f:X\rightarrow Y$ a proper $S$-morphism, whose fibers are at most one-dimensional. Let $\mathcal F$ be a coherent sheaf on $X$, flat over $S$ such that $H^1(f^{-1}(y)\mathcal F\otimes_{\mathcal O_Y}k(y))=(0)$ for each closed point $y\in Y$. Then $f_*\mathcal F$ is $S$-flat, $R^1f_*\mathcal F=0$ and, given any morphism $T\to S$, there is a canonical isomorphism
$$f_*\mathcal F\otimes_{\mathcal O_S}\mathcal O_T \cong (f\times1)_*(\mathcal F\otimes_{\mathcal O_S}\mathcal O_T).$$
If, moreover, $\mathcal F\otimes_{\mathcal O_Y}k(y)$ is globally generated we have also that the canonical map $f^*f_*\mathcal F\rightarrow \mathcal F$ is surjective.
\end{lem}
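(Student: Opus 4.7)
The plan is to follow the standard proof of cohomology and base change, specialized to the setting of one-dimensional fibers, in four steps. The main tools will be Grothendieck's theorem on formal functions, the Leray spectral sequence for the composition $X \to Y \to S$, the standard cohomology-and-base-change machinery, and Nakayama's lemma.

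\textbf{Step 1: vanishing of $R^1 f_* \mathcal F$.} Since $f$ is proper with fibers of dimension at most one, each $R^i f_* \mathcal F$ is coherent and vanishes for $i \geq 2$ by Grothendieck's cohomological dimension bound. To kill $R^1 f_* \mathcal F$, I would apply the theorem on formal functions at each closed point $y \in Y$, giving $(R^1 f_* \mathcal F)^\wedge_y \cong \varprojlim_n H^1(X_y^{(n)}, \mathcal F_n)$, where $X_y^{(n)}$ is the $n$-th infinitesimal neighborhood of $f^{-1}(y)$ in $X$ and $\mathcal F_n$ is the restriction of $\mathcal F$. Filtering $\mathcal F_n$ by the exact sequences $0 \to \mathfrak m_y^n \mathcal F_{n+1} \to \mathcal F_{n+1} \to \mathcal F_n \to 0$ and using the $S$-flatness of $\mathcal F$ to force vanishing of the relevant $\mathrm{Tor}$ terms on the successive quotients, one inductively deduces $H^1(X_y^{(n)}, \mathcal F_n) = 0$ from the hypothesis at $n = 0$. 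Coherence of $R^1 f_* \mathcal F$ then forces the stalk itself to vanish.

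\textbf{Step 2: flatness of $f_* \mathcal F$ over $S$ and base change.} Write $g : Y \to S$. With $R^1 f_* \mathcal F = 0$, the Leray spectral sequence for $g \circ f$ gives canonical isomorphisms $R^i g_*(f_* \mathcal F) \cong R^i (g \circ f)_* \mathcal F$ in all degrees. Applying the standard cohomology-and-base-change theorem to the proper $S$-flat situation $g \circ f : X \to S$ and transporting the conclusion across this isomorphism, I would conclude that the base change maps for $f_* \mathcal F$ relative to $S$ are all isomorphisms. In particular $f_*\mathcal F$ is $S$-flat, and for any $T \to S$ the natural map $f_*\mathcal F \otimes_{\mathcal O_S} \mathcal O_T \to (f \times 1)_*(\mathcal F \otimes_{\mathcal O_S} \mathcal O_T)$ is the asserted isomorphism (the vanishing of $R^1 f_* \mathcal F$ also guarantees that the target is unchanged under base change, since no higher $\mathrm{Tor}$ contributions appear).

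\textbf{Step 3: surjectivity of $f^* f_* \mathcal F \to \mathcal F$.} Specializing the base change isomorphism of Step 2 to closed points $y \in Y$ (viewed as $S$-valued after restricting to the fiber over the image of $y$ in $S$), one obtains $(f_*\mathcal F) \otimes_{\mathcal O_Y} k(y) \cong H^0(f^{-1}(y), \mathcal F \otimes_{\mathcal O_Y} k(y))$. The global generation hypothesis then says that the evaluation map on the fiber $f^{-1}(y)$ is surjective, i.e.\ $f^* f_* \mathcal F \to \mathcal F$ is surjective modulo $\mathfrak m_y$. Nakayama's lemma applied at every point of $X$ (using coherence of $\mathcal F$) then upgrades this to global surjectivity on $X$.

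The main obstacle is Step 1: carrying out the infinitesimal induction when the flatness hypothesis is only with respect to the third scheme $S$, rather than over $Y$ where cohomology and base change is usually stated. The key point is that $S$-flatness still provides enough $\mathrm{Tor}$-vanishing on the graded pieces $\mathfrak m_y^n/\mathfrak m_y^{n+1} \otimes (\mathcal F \otimes k(y))$ to run the induction cleanly, which is precisely the input needed to leverage the one-dimensionality of fibers.
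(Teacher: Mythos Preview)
The paper does not prove this lemma; it is stated as Corollary~1.5 of Knudsen \cite{knudsen} and simply cited. So there is no in-paper argument to compare against, and your sketch is an attempt to supply one.

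Step~1 is essentially correct, and your worry about $S$-flatness versus $Y$-flatness dissolves once you observe that on the one-dimensional scheme $f^{-1}(y)$ the functor $H^1$ is right exact (since $H^2=0$ there): the successive kernels in your filtration are quotients of direct sums of $\mathcal F\otimes_{\mathcal O_Y}k(y)$, hence inherit $H^1=0$, and the induction closes without ever needing the $\mathrm{Tor}$ terms over $\mathcal O_Y$ to vanish.

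Steps~2 and~3, however, do not work as written. In Step~2 the Leray identification $R^ig_*(f_*\mathcal F)\cong R^i(gf)_*\mathcal F$ concerns pushforwards to $S$, not to $Y$, so it cannot by itself produce the comparison $(f_*\mathcal F)|_{Y_T}\cong(f_T)_*(\mathcal F|_{X_T})$ you want; moreover $g:Y\to S$ is not assumed proper, so cohomology-and-base-change for $gf$ is not even available. A clean fix is to use derived base change for the proper morphism $f$ directly: since $Rf_*\mathcal F=f_*\mathcal F$ and $\mathcal F$ is $S$-flat, the isomorphism $Lj^*Rf_*\mathcal F\cong R(f_T)_*Li^*\mathcal F$ collapses (reapplying Step~1 to $f_T$) to the asserted base-change statement, and the vanishing of the higher $Lj^*(f_*\mathcal F)$ then gives $S$-flatness via the local criterion. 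In Step~3 you cannot specialize the $S$-base-change isomorphism to a closed point $y\in Y$: the statement only controls pullback along maps $T\to S$, not along $\spec k(y)\to Y$. Instead reuse Step~1: the transition maps $H^0(\mathcal F_{m+1})\to H^0(\mathcal F_m)$ are surjective (their obstruction lies in $H^1$ of the kernel, which you have shown vanishes), so by formal functions $(f_*\mathcal F)_y$ surjects onto $H^0(f^{-1}(y),\mathcal F\otimes k(y))$; global generation and Nakayama then finish exactly as you wrote.
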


\begin{cor} \label{morebasicprop}
Let $(\pi:X\to S,s_i:S\to X,L)$ be an $(n+1)$-pointed quasistable curve endowed with a balanced line bundle of degree $d>>0$. Let $M$ be either the line bundle $L(s_1+\dots +s_n)$ or $(\omega_X(s_1+\dots+s_n))^3$. Then, for all $m\geq 1$, we have that
\begin{enumerate}
\item $\pi_*(M^m)$ is $S$-flat;
\item $R^1(\pi_*(M^m))=0$;
\item For all $i\geq 1$, the natural map
$$\alpha_i:\pi_*M^i\otimes \pi_*M\to\pi_*M^{i+1}$$
is surjective;
\item $\pi^*\pi_* M^m\to M^m$ is surjective.
\end{enumerate}
\end{cor}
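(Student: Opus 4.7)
The plan is to reduce everything to fiberwise statements and invoke Lemma \ref{cor1.5}, then handle the multiplication map in (3) separately via cohomology and base change combined with the normal generation results of the previous section.

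First I would check that in both choices of $M$, and for every $m\ge 1$, the line bundle $M_s^m$ on each geometric fiber $X_s$ satisfies $H^1(X_s,M_s^m)=0$ and is globally generated. For $M=L(s_1+\dots+s_n)$, viewing $X_s$ as an $(n+1)$-pointed quasistable curve and taking $k=0$ in Corollary \ref{basicprop} gives exactly these two conclusions. For $M=(\omega_X(s_1+\dots+s_n))^3$, one has $M_s^m=(\omega_{X_s}(s_1(s)+\dots+s_n(s)))^{3m}$ with $3m\ge 2$, so viewing $X_s$ as an $n$-pointed semistable curve (simply forgetting the $(n+1)$-th section, which preserves semistability) and applying Corollary \ref{basicdualizing} gives the same two conclusions. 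With both fiberwise hypotheses in hand, Lemma \ref{cor1.5} applied to $\pi$ and $\mathcal F=M^m$ immediately yields (1), (2), and (4).

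For (3) I would proceed as follows. By (1) and (2), standard cohomology and base change gives that each $\pi_*M^m$ is locally free on $S$ and commutes with arbitrary base change, so the fiber of $\alpha_i$ at a geometric point $s\in S$ is identified with the multiplication map
\begin{equation*}
H^0(X_s,M_s^i)\otimes H^0(X_s,M_s)\longrightarrow H^0(X_s,M_s^{i+1}).
\end{equation*}
Normal generation of $M_s$ on the fiber — given by Corollary \ref{normgenprop} in the case $M=L(s_1+\dots+s_n)$ (applied to the $(n+1)$-pointed fiber, whose first $n$ sections are $s_1,\dots,s_n$) and by Corollary \ref{normgendualizing} with $m=3$ in the case $M=(\omega_X(s_1+\dots+s_n))^3$ — says that the iterated multiplication map $H^0(M_s)^{\otimes(i+1)}\to H^0(M_s^{i+1})$ is surjective. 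Since this iterated map factors through $H^0(M_s^i)\otimes H^0(M_s)\to H^0(M_s^{i+1})$, the latter is surjective on each fiber. Nakayama's lemma then forces $\alpha_i$ to be surjective on $S$.

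The only real obstacle is the pure bookkeeping needed to align the indexing conventions: our $X$ is $(n+1)$-pointed, but Corollaries \ref{basicprop}, \ref{basicdualizing}, \ref{normgenprop}, and \ref{normgendualizing} are phrased for curves whose sections are either fully used or of which the last one is omitted. One must therefore check in each case that dropping the section $s_{n+1}$ preserves (semi)stability and that the remaining marked points $s_1,\dots,s_n$ play the role of $p_1,\dots,p_{n}$ (respectively $p_1,\dots,p_{n-1}$) in the hypotheses of those corollaries. Once this matching is made explicit, everything else is a direct application of the results established in Section \ref{technical}.
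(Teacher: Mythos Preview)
Your proposal is correct and follows essentially the same route as the paper: Lemma~\ref{cor1.5} together with the fiberwise vanishing and global generation from Corollaries~\ref{basicprop} (with $k=0$) and~\ref{basicdualizing} gives (1), (2), (4), and then base change plus the normal generation results (Corollaries~\ref{normgenprop} and~\ref{normgendualizing}) reduce (3) to geometric fibers, where Nakayama finishes. One small remark on your bookkeeping: for $M=(\omega_X(s_1+\dots+s_n))^3$ you do not need to forget $s_{n+1}$ and pass to a merely semistable curve (Corollary~\ref{basicdualizing} is stated for quasistable curves); instead apply it directly to the $(n+1)$-pointed quasistable fiber using the second option $\omega_{X_s}(p_1+\dots+p_{(n+1)-1})$, which is exactly $\omega_{X_s}(s_1+\dots+s_n)$.
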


\begin{proof}
(1) (2) and (4) follow immediately from Lemma \ref{cor1.5} and Corollaries \ref{basicprop} with $k=0$ and \ref{basicdualizing}, which assert that we can apply Lemma \ref{cor1.5} to $\pi$ and $M$, in both cases.

Let us now show that (3) holds. From Propositions \ref{normgenprop} and \ref{normgendualizing}, the statement holds if $S=\spec k$. Since $M$ satisfies the hypothesis of Lemma \ref{cor1.5}, the formation of $\pi_*$ commutes with base change. So, $\alpha_i$ is surjective at every geometric point of $S$ and we use Nakayama's Lemma to conclude that $\alpha_i$ is surjective.
\end{proof}

We now show that Knudsen's main lemma also holds for quasistable pointed curves and balanced line bundles of high degree.

\begin{lem} \label{mainlemma}
Let $d>>0$ and consider a contraction $f:X\rightarrow X'$ as in Definition \ref{contraction}. Denote by $M$ and $M'$, respectively, the line bundles $L(s_1+\dots +s_n)$ and $L'(t_1+\dots +t_n)$. Then, for all $m\ge 1$, we have that
\begin{enumerate}
 \item ${f^*(M')}^{m}\cong M^{m}$ and $(M')^{m}\cong f_*(M^{m})$;
\item $R^1f_*(M^{m})=0;$
\item $R^i\pi_*(M^{m})\cong R^i\pi'_*(M^{m})$ for $i\geq 0$.
\end{enumerate}

\end{lem}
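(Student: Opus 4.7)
The plan is to establish the three parts in order, reducing each to a fiberwise cohomology computation that invokes Lemma \ref{cor1.5}. Part (1) is largely formal, Part (2) requires identifying the exceptional fiber's contribution, and Part (3) is a direct Leray spectral sequence argument.

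For Part (1), the first isomorphism $(f^*M')^m\cong M^m$ follows directly from condition (3) of Definition \ref{contraction}, which provides $M\cong f^*M'$; taking $m$-th tensor powers gives the claim. For the second isomorphism $(M')^m\cong f_*M^m$, the projection formula gives $f_*M^m\cong f_*f^*(M')^m\cong (M')^m\otimes f_*\mathcal{O}_X$, so it suffices to prove $f_*\mathcal{O}_X=\mathcal{O}_{X'}$. Since the fibers of $f$ are either single points (where $f$ is an isomorphism) or the rational component $E$ of some $X_s$, for which $H^0(E,\mathcal{O}_E)=k$ and $H^1(E,\mathcal{O}_E)=0$, Lemma \ref{cor1.5} applied to $\mathcal{O}_X$ gives that $f_*\mathcal{O}_X$ is $S$-flat, compatible with base change, and has residue $k$ at every closed point of $X'$; combined with the canonical map $\mathcal{O}_{X'}\to f_*\mathcal{O}_X$, this identifies $f_*\mathcal{O}_X$ with $\mathcal{O}_{X'}$.

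For Part (2), I would apply Lemma \ref{cor1.5} again, now with $\mathcal F=M^m$. We must verify $H^1(f^{-1}(y),M^m\otimes k(y))=0$ for every closed $y\in X'$. Away from the contracted locus this is automatic since the fibers are single points. Over a contracted point $x\in X'$, the fiber is the rational component $E\subset X_s$, and because $M\cong f^*M'$ the restriction $M|_E$ is the pullback of the stalk $(M')_x$, hence $M^m|_E\cong\mathcal{O}_E$; therefore $H^1(E,M^m|_E)=H^1(E,\mathcal{O}_E)=0$. The vanishing $R^1f_*M^m=0$ follows immediately. For Part (3), I would invoke the Leray spectral sequence for the composition $\pi=\pi'\circ f$, namely
\[
E_2^{p,q}=R^p\pi'_*R^qf_*M^m\;\Rightarrow\; R^{p+q}\pi_*M^m.
\]
Part (2) shows $R^qf_*M^m=0$ for $q\geq 1$, so the spectral sequence degenerates on the bottom row, yielding $R^i\pi_*M^m\cong R^i\pi'_*(f_*M^m)\cong R^i\pi'_*((M')^m)$, where the last isomorphism uses Part (1).

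The main obstacle, I expect, is verifying the triviality of $M|_E$ on the contracted fiber: this is the precise point at which the compatibility $M\cong f^*M'$ from Definition \ref{contraction}(3) plays its essential role, and everything else is either formal manipulation or a routine invocation of Lemma \ref{cor1.5} and the Leray spectral sequence. A minor technicality is justifying that the scheme-theoretic fiber of $f$ at the contracted point is exactly $E$ (with its reduced structure); since the statement is local on $S$ it suffices to check this pointwise over $S$, which follows from the explicit geometry of the contraction as analyzed in Proposition \ref{quasistablecont}.
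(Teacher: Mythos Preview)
Your proof is correct and follows essentially the same route as the paper: triviality of $M$ on the fibers of $f$, an invocation of Lemma~\ref{cor1.5}, and the Leray spectral sequence for $\pi=\pi'\circ f$. The only minor difference is in Part~(1): the paper works directly with the canonical map $(M')^{m}\to f_*f^*(M')^{m}\cong f_*M^{m}$ and checks it is an isomorphism on geometric fibers, whereas you factor through the projection formula and reduce to $f_*\mathcal{O}_X=\mathcal{O}_{X'}$; both arguments rest on the same fiberwise computation and Lemma~\ref{cor1.5}.
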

\begin{proof}
That ${f^*(M')}^{m}$ is isomorphic to $M^{m}$ comes from our definition of contraction morphism. So, also $f_*f^*({M'}^{m})$ is isomorphic to $f_*(M^{m})$. So, composing this with the canonical map from ${M'}^{m}$ into $f_*f^*({M'}^{m})$, we get a map
$${M'}^{m}\rightarrow f_*(M^{m}).$$
Since the fibers of $f$ are at most smooth rational curves and $M$ is trivial on them, also $M^{m}$ is trivial on the fibers of $f$, so we can apply Lemma \ref{cor1.5} to it. Since the previous morphism is an isomorphism on the geometric fibers of $f$ and $(f_*M)^{m}$ is flat over $S$, we conclude that it is an isomorphism over $S$.

That $R^1f_*(M^{m})=0$ follows directly from Lemma \ref{cor1.5} while $(3)$ follows from $(1)$ and the Leray spectral sequence, which is degenerate by $(2)$.
\end{proof}

\subsection{Construction of the contraction functor}\label{const}

From now on, consider $d>>0$. Using the contraction morphism defined above, we will try to define a natural transformation from $\o\P_{d,g,n+1}$ to $\mathcal Z_{d,g,n}$.
Let $(\pi:X\rightarrow S,s_i:S\rightarrow X,L)$ be an $(n+1)$-pointed quasistable curve with a balanced line bundle $L$ of relative degree $d$.
For $i\geq 0$, define
$$\mathcal S_i:=\pi_*(L(s_1+\dots +s_n)^{\otimes i})$$
Since we are considering $d>>0$, then, by Corollary \ref{morebasicprop}, $R^1(\mathcal S_i)=0$, so $\mathcal S_i$ is locally free of rank $h^0(L(s_1+\dots +s_n)^{\otimes i})=i(d+n)-g+1$, for $i\geq 1$, .
Consider
$$\mathbb P(\mathcal S_1)\rightarrow S.$$
Again by Corollary \ref{morebasicprop}, the natural map
$$\pi^*(\pi_*L(s_1+\dots +s_n))\rightarrow L(s_1+\dots + s_n)$$
is surjective, so we get a natural $S$-morphism
\begin{equation*}
\xymatrix{
{X} \ar[r]^{q} \ar[d]^{\pi} & {\mathbb P(\mathcal S_1)} \ar[dl]\\
{S} \ar @{->} @/^/[u]^{s_i}
}
\end{equation*}
Define $Y:=q(X)$, $N:=\mathcal O_{\mathbb P(\mathcal S_1)}(1)_{|Y}$, and, by abuse of notation, call $q$ the (surjective) $S$-morphism from $X$ to $Y$. $N$ is an invertible sheaf over $Y$ and $q^*N \cong L(s_1+\dots +s_n)$.

Moreover, by Corollary \ref{morebasicprop} (3), we have that
$$Y\cong \mathcal Proj(\oplus_{i\geq 0}\mathcal S_i).$$ So, since all $\mathcal S_i$ are flat over $S$ (again by Corollary \ref{morebasicprop}), also $Y$ is flat over $S$, so it is a projective curve over $S$ of genus $g$ (since the only possible contractions are of rational components).

So, if we endow $\pi_c:Y\rightarrow S$ with the sections $t_i:=qs_i$, for $1\leq i\leq n$, the extra section $\Delta:=qs_{n+1}$ and $L^c:=N(-t_1-\dots -t_n)$ as above, we easily conclude that $q:X\rightarrow Y$ is a contraction.
Now, consider a morphism
\begin{equation*}
\xymatrix{
{ X} \ar[d]^{\pi} \ar[r]^{\beta_2}  & { X'} \ar[d]_{\pi'} \\
{S} \ar[r]_{\beta_1} \ar @{->} @/^/[u]^{s_i} & {S'} \ar @{->} @/_/[u]_{s_i'}
}
\end{equation*}
of $(n+1)$-pointed quasistable curves with balanced line bundles $L$ and $L'$ of relative degree $d$ in $\o\P_{d,g,n}$ and let us see that $(\beta_1,\beta_2,\beta_3)$, where $\beta_3$ is the isomorphism between $L$ and $\beta_2^*L'$, induces in a canonical way a morphism in $\mathcal Z_{d,g,n}$ between the contracted curves.

Define $\mathcal S':=\pi'_*L'(s_1' +\dots +s_n')$. Recall that, to give an $S'$-morphism from $\mathbb P(\mathcal S_1)$ to $\mathbb P(\mathcal S')$ is equivalent to give a line bundle $M$ on $\mathbb P(\mathcal S_1)$ and a surjection
$$(\beta_1\pi^c)^*(\pi'_*L')\rightarrow M$$
where by $\pi^c$ we denote the natural morphism $\mathbb P(\mathcal S_1)\rightarrow S$.
\begin{equation*}
\xymatrix{
{\mathbb P(\mathcal S_1)} \ar[dr]_{\pi^c} \ar @{-->} @/^18pt/[rrr] &{ X} \ar[l]_{q} \ar[d]^{\pi} \ar[r]^{\beta_2}  & { X'} \ar[r]^{q'} \ar[d]_{\pi'} & {\mathbb P(\mathcal S')} \ar[dl]\\
&{S} \ar[r]_{\beta_1} \ar @{->} @/^/[u]^{s_i} & {S'} \ar @{->} @/_/[u]_{s_i'}
}
\end{equation*}
Since we are considering $d>>0$, for $s'\in S'$, $h^0((\pi')^{-1}(s'), L'_{|(\pi')^{-1}(s')})$ is constant and equal to $d+n-g+1$. So, we can apply the theorem of cohomology and base change to conclude that there is a natural isomorphism
$$\beta_1^*\pi'_*L'\cong\pi_*\beta_2^*L'.$$
Since we have also the isomorphism
$$\beta_3:L\rightarrow \beta_2^*L'$$
we get a natural isomorphism
$$\pi^{c*}\beta_1^*(\pi'_*L')\cong{\pi^c}^*(\pi_*L).$$
Composing this with the natural surjection
$${\pi^c}^*(\pi_*L)\rightarrow \mathcal O_{\mathbb P(\mathcal S_1)}(1),$$
we conclude that there is a canonical surjection
$${\pi^c}^*\beta_1^*(\pi'_*L')\rightarrow \mathcal O_{\mathbb P(\mathcal S_1)}(1)$$
defining a natural $S'$-morphism from $\mathbb P(\mathcal S_1)\rightarrow \mathbb P(\mathcal S')$. This morphism naturally determines a morphism from $X^c$ to ${X'}^c$, where ${X'}^c$ is the image of $X'$ in $\mathbb P(\mathcal S')$ via $q'$, inducing a natural isomorphism between $L^c$ and the pullback ${L'}^c$, the restriction of $\mathcal O_{\mathbb P(\mathcal S')}(1)$ to ${X'}^c$. The fact that all these morphisms are canonical implies that this construction is compatible with the composition of morphisms, defining a natural transformation. We have just proved the following proposition.

\begin{prop}
There is a natural transformation $c$ from $\o\P_{d,g,n+1}$ to $\mathcal Z_{d,g,n}$ given on objects by the contraction morphism defined above.
\end{prop}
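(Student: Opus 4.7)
The plan is to construct the functor $c$ on objects by projective embedding through $L(s_1+\dots+s_n)$, then on morphisms by functoriality of projective bundles via cohomology and base change, and finally to check compatibility with composition.

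First, for an object $(\pi:X\raw S, s_i, L)$ of $\o\P_{d,g,n+1}(S)$, I would set $\mathcal{S}_i := \pi_*(L(s_1+\dots+s_n)^{\otimes i})$. For $d\gg 0$, Corollary \ref{morebasicprop} supplies all the properties I need: $\mathcal{S}_i$ is locally free and $S$-flat, its formation commutes with base change (because $R^1\pi_*=0$), the evaluation $\pi^*\mathcal{S}_1\raw L(s_1+\dots+s_n)$ is surjective, and the multiplication maps $\mathcal{S}_i\otimes \mathcal{S}_1 \raw \mathcal{S}_{i+1}$ are surjective. The surjection $\pi^*\mathcal{S}_1\raw L(s_1+\dots+s_n)$ yields an $S$-morphism $q:X\raw \mathbb{P}(\mathcal{S}_1)$; its image $Y := q(X)$ can be identified with $\mathcal{P}\mathrm{roj}(\bigoplus_{i\geq 0}\mathcal{S}_i)$, which is $S$-flat with geometric fibers of arithmetic genus $g$ (rational components get contracted but nothing else). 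I would then define $t_i := q\circ s_i$, $\Delta := q\circ s_{n+1}$ and $L^c := \mathcal{O}_{\mathbb{P}(\mathcal{S}_1)}(1)|_Y(-t_1-\dots-t_n)$.

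Second, I would check fiberwise that $q$ satisfies the four axioms of Definition \ref{contraction}: commutation with projections and sections (1)-(2) is built-in, (3) holds by construction since $q^*(N) \cong L(s_1+\dots+s_n)$ with $N = \mathcal{O}_{\mathbb{P}(\mathcal{S}_1)}(1)|_Y$, and for (4) the balanced and quasistable hypotheses force any contracted irreducible fiber component to be a single rational curve containing $s_{n+1}$ on which $L(s_1+\dots+s_n)$ has degree zero. Combined with Proposition \ref{quasistablecont}, this yields that each geometric fiber of $(Y,t_i,L^c)$ is an $n$-pointed quasistable curve carrying a balanced degree $d$ line bundle, and the extra section $\Delta$ makes the data an object of $\mathcal{Z}_{d,g,n}(S)$.

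Third, I would define $c$ on morphisms. Given a morphism $(\beta_1,\beta_2,\beta_3)$ from $(X\raw S, s_i, L)$ to $(X'\raw S', s'_i, L')$, cohomology and base change applies because $h^0$ of $L'(s'_1+\dots+s'_n)$ is constant in the family, giving a canonical isomorphism $\beta_1^*\pi'_*(L'(s'_1+\dots+s'_n)) \cong \pi_*\beta_2^*(L'(s'_1+\dots+s'_n))$; composing with the pushforward of $\beta_3$ (and using $s_i = \beta_2^{-1}\circ s'_i\circ \beta_1$) produces a canonical isomorphism $\beta_1^*\mathcal{S}'_1 \cong \mathcal{S}_1$. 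This gives a canonical $S'$-morphism $\mathbb{P}(\mathcal{S}_1)\raw \mathbb{P}(\mathcal{S}'_1)$ restricting to a morphism $Y\raw Y'$ respecting the sections, the extra section, and the tautological line bundles.

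Finally, functoriality: because each step (pushforward, evaluation map, base change isomorphism, projective bundle morphism) is canonical, composition of morphisms in $\o\P_{d,g,n+1}$ is automatically sent to composition in $\mathcal{Z}_{d,g,n}$, and identities are preserved. The main obstacle I expect is the fiberwise verification that $q$ is genuinely a contraction and that the output lands in $\mathcal{Z}_{d,g,n}$; this is exactly the content of Proposition \ref{quasistablecont} together with the numerical control given by the definition of balanced, so the real work sits in the earlier technical sections rather than in this proposition, which is then essentially a packaging statement.
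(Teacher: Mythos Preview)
Your proposal is correct and follows essentially the same approach as the paper: the construction of $Y$ via $\mathbb P(\mathcal S_1)$ and $\mathcal Proj(\oplus_i\mathcal S_i)$ using Corollary~\ref{morebasicprop}, and the handling of morphisms via cohomology and base change, are exactly what the paper does in Section~\ref{const}. If anything you are slightly more explicit than the paper in invoking Proposition~\ref{quasistablecont} and in spelling out the fiberwise verification of the contraction axioms, but the argument is the same.
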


\subsection{Proof of the main Theorem}\label{proof}

We can now prove our main Theorem.

\begin{proof}(of Theorem \ref{main})
We must show that the contraction functor is an equivalence of categories, i. e., it is fully faithful and essentially surjective on objects. The fact that it is full is immediate. We can also conclude easily that it is faithful from the fact that a morphism of $\mathbb P^1$ fixing 3 distinct points is necessarily the identity. In fact, contraction morphisms induce isomorphisms on the geometric fibers away from contracted components and the contracted components have at least 3 special points and it is enough to use flatness to conclude.

In order to show that $c$ is essentially surjective on objects we will use Knudsen's stabilization morphism (see \cite{knudsen}, Def. 2.3) and check that it works also for pointed quasistable curves with balanced line bundles.

So, let $\pi:X\rightarrow S$ be an pointed quasistable curve, with $n$ sections $s_1,\dots,s_n$, an extra section $\Delta$ and a balanced line bundle $L$ on $X$, of relative degree $d$. Let $\mathcal I$ be the $\mathcal O_X$-ideal defining $\Delta$. Define the sheaf $\mathcal K$ on $X$ via the exact sequence
$$0\rightarrow \mathcal O_X\stackrel{\delta}{\rightarrow}\check{\mathcal I}\oplus(s_1+\dots s_n)\rightarrow \mathcal K\rightarrow 0$$
where $\delta$ is the diagonal morphism, $\delta (t)=(t,t)$.

Define
$$X^s:=\mathbb P(\mathcal K).$$
and let $p:X_s\rightarrow X$ be the natural morphism from $X^s$ to $X$. Theorem 2.4 of \cite{knudsen} asserts that, in the case that $X$ is a pointed stable curve, the sections $s_1\dots,s_n$ and $\Delta$ have unique liftings $s_1',\dots,s_{n+1}'$ to $X^s$ making $X^s\rightarrow S$ an $(n+1)$-pointed stable curve and $p:X^s\rightarrow X$ a contraction. One checks easily that the same construction holds also if $X$ is a quasistable pointed curve instead of a stable one. In fact, the assertion is local on $S$, the problem being the points where $\Delta$ meets non-smooth points of the fibre or other sections since in the other points $X^s$ is isomorphic to $X$. In the case where $\Delta$ meets a non-smooth point of a geometric fiber, locally $X^s$ is the total transform of the blow-up of $X$ at that point with the reduced structure and $s_{n+1}'$ is a smooth point of the exceptional component. In the case where $\Delta$ coincides with another section $s_i$ in a geometric fiber $X_s$ of $X$, then, locally, on $X^s$ is the total transform of the blow-up of $X$ at $s_i(s)$, again with the reduced structure, and $s_{i}'$ and $s_{n+1}'$ are two distinct smooth points of the exceptional component.

\begin{figure}
\begin{center}
\scalebox{.6}{\epsfig{file=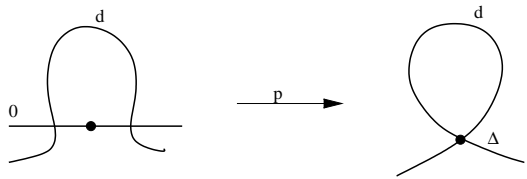}}
\end{center}
\caption{Stabilization of pointed quasistable curves with balanced degree $d$ line bundles.}
\label{stabex}
\end{figure}

Let $L^s:=p^*L$. Then the multidegree of $L^s$ on a geometric fiber $X_s^s$ coincides with the multidegree of $L$ in the irreducible components of $X_s^s$ that correspond to irreducible components of $X$ and, in the possibly new rational components, the degree is $0$. So, $L^s$ is balanced of relative degree $d$.

To conclude, we must check that $c(X^s)$ is isomorphic to $X$. By definition, $c(X^s)$ is given by the image of $X^s$ on $\mathbb P (\pi^s_*(L^s(s_1'+\dots +s_n')))$. Consider the line bundle $L(s_1+\dots +s_n)$ on $X$. By Corollary \ref{morebasicprop}, there is a natural surjection
$$\pi^*\pi_*(L(s_1+\dots + s_n))\rightarrow L(s_1+\dots +s_n).$$
But, since $\pi_*(L(s_1+\dots +s_n))$ is naturally isomorphic to $\pi_*^sp^*L(s_1+\dots +s_n)$, we get a natural surjection
$$\pi^*(\pi_*^sL^s(s_1'+\dots +s_n'))\rightarrow L(s_1+\dots +s_n)$$
so, equivalently, a morphism $f$ from $X$ to $\mathbb P (L^s(s_1'+\dots +s_n'))$. Since $p^*(L(s_1+\dots + s_n))=L^s(s_1'+\dots + s_n')$ induces the natural morphism $q:X^s\rightarrow \mathbb P(\pi_*^s(s_1' +\dots +s_n'))$, whose image is $c(X^s)$, naturally the image of $f$ is $c(X^s)$. It is easy to check that $f$ is an isomorphism on the geometric fibers, so, by flatness, we conclude that $f$ gives an $S$-isomorphism between $X$ and $c(X^s)$ as pointed quasistable curves and determines an isomorphism between the respective balanced degree $d$ line bundles.
\end{proof}

\section{The forgetful morphism from $\o\P_{d,g,n}$ onto $\o{\mathcal M}_{g,n}$}\label{forgetful}

Now, for each $n>0$, we will try to construct a morphism $\Psi_{d,g,n}:\o\P_{d,g,n+1}\to \o{\M}_{g,n}$ fitting in diagram (\ref{induction}) above.

Let $(\pi:X\to S,s_i:S\to X)$, $i=1,\dots,n$ be an $n$-pointed quasistable curve over $S$. Denote by $\omega$ the line bundle $\omega_{X/S}^3(s_1+\dots+s_n)$. Then, by Corollary \ref{morebasicprop}, $R^1(\pi_*\omega)=0$, so it is locally free and there is an $S$-morphism $\gamma:X\to \mathbb P(\omega)$ making the following diagram commute.

\begin{equation}
\xymatrix{
{X} \ar[rr]^{\gamma} \ar[dr]^{\pi} & & {\mathbb P(\pi_*(\omega))} \ar[dl] \\
& {S} \ar @{->} @/^/[ul]^{s_i} &
}
\end{equation}

The restriction of $\gamma$ to any fiber $X_s$ of $\pi$ maps $X_s$ to its stable model in $\mathbb P(\omega)$, which is naturally endowed with the sections $\gamma s_i$, for $i=1,\dots, n$. This follows from the fact that $\omega$ is very ample on the stable components of each fiber, whereas it has degree $0$ on the exceptional components. Moreover, $\gamma(X)$ is flat over $S$. In fact, from Corollary \ref{morebasicprop}, for any $i\geq 1$, the natural map
$$\pi_*\omega^i\otimes \pi_*\omega\to \pi_*\omega^{i+1}$$
is surjective. It follows that $\gamma(X)\cong \mathcal Proj(\oplus_{i\geq 0}\pi_*(\omega_X(s_1+\dots +s_n))^i)$, which is flat over $S$ because each $\pi_*(\omega_X(s_1+\dots+s_n))^i$ is $S$-flat, again by Corollary \ref{morebasicprop}.

It is easy to check that this yields a surjective morphism $\Psi_{d,g,n}$ from $\o\P_{d,g,n}$ onto $\o\M_{g,n}$ fitting in diagram (\ref{induction}) and making it commutative.

Let $(\pi:X\to S,s_i:S\to X,L)$ be an $(n+1)$-pointed quasistable curve of genus $g$ endowed with a balanced line bundle $L$ of relative degree $d$ over $X$. It is immediate to check that, restricting ourselves to the geometric fibers of $\pi$, the diagram is commutative since in both directions we get the $n$-pointed curve which is the stable model of the initial one, after forgetting the last point. Now, since all families are flat over $S$, we conclude that the diagram is commutative.

The surjectivity of $\Psi_{d,g,n}$ follows from the fact that $\Psi_{d,g,0}$ is surjective (see \cite{capneron}, Proposition 4.12) and from the commutativity of the diagram because $\Phi_{d,g,n}$ and $\Pi_{g,n}$ are the universal morphisms onto $\o\P_{d,g,n-1}$ and $\o{\M}_{g,n}$, respectively.

So, the fibers of $\Psi_{d,g,n}$ over a pointed curve $X'\in\o{\mathcal M}_{g,n}$ are the quasistable pointed curves $X$ with stable model $X'$ endowed with balanced degree $d$ line bundles.

\section{Further properties}\label{properties}

Let $X$ be an $n$-pointed quasistable curve over $k$. By applying the contraction morphism we get an $(n-1)$-pointed quasistable curve with an extra section. If we forget about this extra section and we iterate the contraction procedure $n$ times, at the end we get a quasistable curve with no marked points, call it $X_0$. Denote by $f$ this morphism from $X$ to $X_0$.

Let $\omega_{X_0}$ be the dualizing sheaf of $X_0$. For each proper subcurve $Z_0$ of $X_0$, the degree of $\omega_{X_0}$ in $Z_0$ is $w_{Z_0}=2g_{Z_0}-2+k_{Z_0}$. In particular, it has degree $0$ on exceptional components of $X_0$. Consider now the pullback of $\omega_{X_0}$ via $f$, $f^*(\omega_{X_0})$. This is a line bundle on $X$ having degree $0$ on rational bridges and on rational tails; moreover, given a  proper subcurve $Z$ of $X$ whose image under $f$ is a proper subcurve $Z_0$ of $X_0$, $f^*(\omega_{X_0})$ has degree $w_{Z_0}=w_{Z}-t_Z$ on $Z$.

So, a line bundle $L$ of degree $d$ on $X$ with given balanced multidegree on rational tails and rational bridges of $X$ is balanced on $X$ if and only if $L\otimes f^*(\omega_{X_0})$ is balanced on $X$ of degree $d+(2g-2)$ and with the same multidegree on rational tails and rational bridges. In fact, for each proper subcurve $Z$ of $X$ which is not contained in rational tails or rational bridges, we have that
$$\mbox{deg}_Z(L\otimes f^*(\omega_{X_0}))=\mbox{deg}_{Z}L+w_Z-t_Z\le $$
$$\le \frac{dw_Z}{2g-2}+\frac{g-1-d}{2g-2}t_Z-b_Z^L+\frac{k_Z}{2}+w_Z-t_Z=$$
$$=\frac{(d+2g-2)w_Z}{2g-2}+\frac{g-1-(d+2g-2)}{2g-2}t_Z-b_Z^L+\frac{k_Z}{2}$$
and similarly that
$$\mbox{deg}_Z(L\otimes f^*(\omega_{X_0}))\ge \frac{dw_Z}{2g-2}+\frac{3g-3-d}{2g-2}t_Z-b_Z^L-\frac{k_Z}{2}+w_Z-t_Z=$$
$$=\frac{(d+2g-2)w_Z}{2g-2}+\frac{3g-3-(d+2g-2)}{2g-2}t_Z-b_Z^L-\frac{k_Z}{2},$$
so $(L\otimes f^*(\omega_{X_0}))_{|Z}$ satisfies inequality (\ref{basic}) if and only if $L_{|Z}$ does.

In conclusion, we have the following result.

\begin{prop} Let $d$ and $d'$ be integers such that there exists an $m\in \Z$ such that $d'=d+m(2g-2)$. Then, $\o{\mathcal P}_{d,g,n}$ and $\o{\mathcal P}_{d',g,n}$ are isomorphic.
\end{prop}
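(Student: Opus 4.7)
The plan is to promote the pointwise computation immediately preceding the proposition into a functorial construction on families, and show that this construction defines an equivalence of categories between $\o{\mathcal P}_{d,g,n}$ and $\o{\mathcal P}_{d',g,n}$. By writing $d' = d + m(2g-2)$ and iterating, it suffices to treat the case $m=1$; the opposite direction then supplies the inverse.

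First I would construct, for any family $(\pi:X\to S, s_i, L)\in \o{\mathcal P}_{d,g,n}(S)$, the family $X_0\to S$ obtained by applying the contraction morphism of Section~\ref{const} iteratively $n$ times (forgetting the extra section at each step). By Proposition \ref{quasistablecont} and its relative counterpart established in the proof of Theorem \ref{main}, this produces a family of (unpointed) quasistable curves of genus $g$ over $S$ together with a canonical $S$-morphism $f:X\to X_0$ that contracts exactly the rational tails and the rational bridges on which $L$ has degree $0$ (along with the exceptional components that they contain). The relative dualizing sheaf $\omega_{X_0/S}$ is then a well-defined line bundle of relative degree $2g-2$ on $X_0$, and $f^{*}\omega_{X_0/S}$ is a line bundle on $X$ that has degree $0$ on every rational tail and every rational bridge, and degree $w_Z-t_Z$ on any subcurve $Z\subseteq X_s$ not contained in such a chain.

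Next I would define the functor
\[
F:\o{\mathcal P}_{d,g,n}\longrightarrow \o{\mathcal P}_{d+2g-2,g,n},\qquad (\pi,s_i,L)\longmapsto \bigl(\pi, s_i, L\otimes f^{*}\omega_{X_0/S}\bigr),
\]
and on morphisms by sending $(\beta_1,\beta_2,\beta_3)$ to $(\beta_1,\beta_2,\beta_3\otimes f^{*}\omega_{X_0/S})$; functoriality of the iterated contraction (already proved) makes this assignment well defined and compatible with composition. That $L\otimes f^{*}\omega_{X_0/S}$ is balanced of relative degree $d+2g-2$ is exactly the pointwise inequality check carried out just before the proposition: the multidegree on rational tails and rational bridges is preserved (and hence remains as prescribed in Definition~\ref{balanced} and Lemma~\ref{combdesc}), and the inequality (\ref{basic}) is invariant under the shift $d\mapsto d+2g-2$ accompanied by $\deg_ZL\mapsto \deg_ZL + w_Z-t_Z$.

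Finally I would exhibit the inverse: the analogous functor $F':\o{\mathcal P}_{d+2g-2,g,n}\to \o{\mathcal P}_{d,g,n}$ given by twisting by $f^{*}\omega_{X_0/S}^{-1}$. Since the construction of $X_0$ and of the morphism $f$ depends only on the underlying pointed quasistable curve (not on $L$), the compositions $F'\circ F$ and $F\circ F'$ are canonically isomorphic to the respective identity functors, so $F$ is an equivalence of categories. Iterating $|m|$ times (and taking the inverse functor if $m<0$) yields the desired isomorphism for arbitrary $m\in\Z$. The only non-routine point is checking that the formation of $X_0$ commutes with arbitrary base change $S'\to S$, so that $F$ is a morphism of stacks and not merely of groupoid-valued presheaves; this follows because each individual contraction step was constructed via $\mathcal{P}roj$ of sheaves whose formation commutes with base change (Corollary \ref{morebasicprop} and Lemma \ref{cor1.5}).
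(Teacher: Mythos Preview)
Your approach is essentially the paper's own: iterate the contraction to obtain $q_0:X\to X_0$ with $X_0$ an unpointed quasistable family, and twist by the pullback of a power of $\omega_{X_0/S}$. The paper does all $m$ at once via $q_0^*(\omega_{X_0/S}^m)$ rather than reducing to $m=1$, but this is cosmetic.

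There is, however, a genuine slip in your justification of the inverse. You write that ``the construction of $X_0$ and of the morphism $f$ depends only on the underlying pointed quasistable curve (not on $L$)''. This is false, and in fact contradicts what you said two paragraphs earlier: the iterated contraction collapses rational tails \emph{and the rational bridges on which $L$ has degree $0$}. Which bridges carry degree $0$ versus degree $1$ is data coming from $L$, so two balanced line bundles on the same pointed curve can yield non-isomorphic $X_0$'s (the degree-$1$ bridges survive as exceptional components, the degree-$0$ ones get crushed). Thus your argument that $F'\circ F\cong \mathrm{id}$ does not go through as written.

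The fix is easy and is exactly what the paper's preliminary computation sets up: $f^*\omega_{X_0/S}$ has degree $0$ on \emph{every} rational bridge of $X$ (whether contracted by $f$ or surviving as an exceptional component of $X_0$, on which $\omega_{X_0}$ has degree $2g_E-2+k_E=0$). Hence $L$ and $L\otimes f^*\omega_{X_0/S}$ have the same multidegree on all rational bridges, so they determine the \emph{same} contracted family $X_0$ and the same morphism $f$. With this observation in place, $F'\circ F$ and $F\circ F'$ are indeed canonically the identity, and the rest of your argument stands.
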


\begin{proof}
We must show that there is an equivalence of categories between $\o{\mathcal P}_{d,g,n}$ and $\o{\mathcal P}_{d',g,n}$. So, let $(\pi:X\to S,s_i:S\to X, L)$, $i=1,\dots,n$ be an object of $\o{\mathcal P}_{d,g,n}$. Consider its image under $\Phi_{d,g,0}\circ\Phi_{d,g,1}\circ\dots\circ\Phi_{d,g,n}$ and denote it by $(\pi_0:X_0\to S,L_0)$. According to \ref{const}, there is an $S$-morphism $q_0:X\to X_0$. Then, by what we have seen above, $L':=L\otimes q_0^*(\omega_{X_0/S}^m)$ is a balanced line bundle of relative degree $d'$ over $X$, so $(\pi:X\to S,s_i:S\to X,L')\in\o{\mathcal P}_{d',g,n}$.

It is easy to check that this defines an equivalence between $\o{\mathcal P}_{d,g,n}$ and $\o{\mathcal P}_{d',g,n}$.
\end{proof}

\begin{prop}
For all $n>0$, there are forgetful morphisms $\Phi_{d,g,n}:\o{\mathcal P}_{d,g,n+1}\rightarrow \o{\mathcal P}_{d,g,n}$ endowed with $n$ sections $\sigma^1_{d,g,n},\dots,\sigma^{n}_{d,g,n}$ yielding Cartier divisors $\Delta^i_{d,g,n+1}$, $i=1,\dots,n$ such that $\sigma^i_{d,g,n}$ gives an isomorphism between $\o{\mathcal P}_{d,g,n}$ and $\Delta^i_{d,g,n+1}$.
\end{prop}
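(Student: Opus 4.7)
The plan is to use the isomorphism $\o\P_{d,g,n+1}\cong \mathcal{Z}_{d,g,n}$ supplied by Theorem~\ref{main}. The universal family $\mathcal{Z}_{d,g,n}\to \o\P_{d,g,n}$ already carries $n$ diagonals $\delta_{1,n+1},\dots,\delta_{n,n+1}$, each sending $(X\to S, s_j, L)$ to the same data equipped with the extra section $\Delta=s_i$. I define $\Phi_{d,g,n}$ to be the composition of the isomorphism with the projection $\mathcal{Z}_{d,g,n}\to \o\P_{d,g,n}$, and I define $\sigma^i_{d,g,n}$ as the composition of $\delta_{i,n+1}$ with the inverse isomorphism. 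Concretely, on families $\sigma^i_{d,g,n}$ applies the stabilization functor from the proof of Theorem~\ref{main} to $(X, s_1,\dots,s_n,\Delta=s_i,L)$; this blows up $X$ along $s_i$ and produces an $(n+1)$-pointed quasistable curve whose lifts $s_i'$ and $s_{n+1}'$ are two distinct smooth points of an exceptional $\mathbb{P}^1$ on which the pulled-back line bundle has degree zero, while on the remaining components the balanced multidegree is preserved.

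That $\sigma^i_{d,g,n}$ is a section of $\Phi_{d,g,n}$ follows from the computation in the last paragraph of the proof of Theorem~\ref{main}: applying the contraction functor to the stabilized curve recovers $(X,s_j,L)$ canonically. Hence $\Phi_{d,g,n}\circ \sigma^i_{d,g,n}=\mathrm{id}_{\o\P_{d,g,n}}$, so $\sigma^i_{d,g,n}$ is a closed immersion and restricts to an isomorphism onto its image $\Delta^i_{d,g,n+1}$. This takes care of the last assertion for free.

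It remains to show that each $\Delta^i_{d,g,n+1}$ is a Cartier divisor. The morphism $\Phi_{d,g,n}$ is representable, proper and flat of relative dimension one, since after identification with $\mathcal{Z}_{d,g,n}\to \o\P_{d,g,n}$ it is the forgetful morphism from the universal family of $n$-pointed quasistable curves (flatness is built into Definition~\ref{quasistabledef}, properness follows from that of the individual families). Moreover, the image of $\sigma^i_{d,g,n}$ lies entirely in the smooth locus of $\Phi_{d,g,n}$: by construction the $(n+1)$-th marked point is a smooth point of the exceptional bubble created by stabilization, so geometrically it avoids the nodes of the fiber. Along a section into the relative smooth locus of a flat family of curves, the image is cut out étale-locally by a single equation and is therefore a relative Cartier divisor. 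Combined with the smoothness of $\o\P_{d,g,n+1}$ proved via Theorem~\ref{aim}, this yields the Cartier divisor structure on $\Delta^i_{d,g,n+1}$.

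The step I expect to be most delicate is verifying that $\sigma^i_{d,g,n}$ really lands in the smooth locus of $\Phi_{d,g,n}$ and, equivalently, that the stabilization construction behaves well in families when $\Delta$ coincides with one of the existing sections $s_i$. Knudsen's local description of stabilization at a coincidence of sections (Theorem~2.4 of \cite{knudsen}, as recalled inside the proof of Theorem~\ref{main}) gives the required local model, namely the blow-up of the total space at $s_i$ with its reduced structure, and then one only needs to track that this model produces a Cartier divisor in the Artin-stack setting. Once this local picture is in place, both the Cartier assertion and the isomorphism $\o\P_{d,g,n}\cong \Delta^i_{d,g,n+1}$ follow routinely.
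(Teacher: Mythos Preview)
The paper states this proposition without proof, so there is nothing to compare against directly; your proposal supplies the natural argument the paper leaves implicit. Your use of the isomorphism $\o\P_{d,g,n+1}\cong\mathcal{Z}_{d,g,n}$ from Theorem~\ref{main} together with the diagonals $\delta_{i,n+1}$ (which the paper already introduces as the $n$ sections of the universal family) is exactly the intended construction, and the Cartier-divisor claim follows as you say from the section landing in the relative smooth locus.

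One simplification: you verify that $\sigma^i_{d,g,n}$ lands in the smooth locus by passing through the stabilization picture and tracking the exceptional bubble, but this is unnecessary. Working directly on the $\mathcal{Z}_{d,g,n}$ side, the diagonal $\delta_{i,n+1}$ sends $(X,s_j,L)$ to $(X,s_j,L,\Delta=s_i)$, and $s_i$ is already a smooth point of the fibre by the definition of a pointed curve; so the image lies in the smooth locus of the universal curve immediately, without invoking Knudsen's local blow-up description. This removes the step you flagged as most delicate.
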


\subsection{Rigidified balanced Picard stacks}

Note that each object $(\pi:X\to S,s_i:S\to X, L)$, $i=1,\dots,n$, in $\o{\mathcal P}_{d,g,n}$ has automorphisms given by scalar multiplication by an element of $\Gamma ( X,\mathbb G_m)$ along the fibers of $ L$. Since these automorphisms fix $ X$, there is no hope $\o{\mathcal P}_{d,g,n}$ can be re\-pre\-sen\-ta\-ble over $\o{\mathcal M}_{g,n}$ (see \cite{av}, 4.4.3). The rigidification procedure, defined in \cite{avc}, fits exactly on our set up and produces an algebraic stack with those automorphisms removed.

Denote by $\o\P_{d,g,n}\fatslash \mathbb G_m$ the rigidification of $\o\P_{d,g,n}$ along the action of $\mathbb G_m$.
Exactly because the action of $\mathbb G_m$ on $\o{\mathcal P}_{d,g,n}$ leaves $\o{\mathcal M}_{g,n}$ invariant, the morphism $\Psi_{d,g,n}$ descends to a morphism from $\o{\mathcal P}_{d,g,n}\fatslash \mathbb G_m$ onto $\o{\mathcal M}_{g,n}$, which we will denote again by $\Psi_{d,g,n}$, making the following diagram commutative.
\begin{equation}\label{induction2}
\xymatrix{
& {\o{\mathcal P}_{d,g,n}\fatslash \mathbb G_m} \ar[rd]^{\Psi_{d,g,n}} \ar[dl]_{\Phi_{d,g,n}} & \\
{\o{\mathcal P}_{d,g,n-1}} \ar[dr]_{\Psi_{d,g,n-1}} & & {\o{\mathcal M}_{g,n}} \ar[dl]^{\Pi_{g,n}}\\
& {\o{\mathcal M}_{g,n-1}} &
}
\end{equation}

So, the same argument we used to show that $\Psi_{d,g,n}$ is universally closed for all $n\geq 0$ if and only if $\Psi_{d,g,0}$ is universally closed holds also in this case. Moreover, since, for $n=0$, we have that $\Psi_{d,g,0}:[H_d/G]\to \o{\mathcal M}_g$ is proper and strongly representable if and only if $(d-g+1,2g-2)=1$, we have that the same statement holds in general for every $n\geq 0$.

\begin{prop}
Let  $\o{\mathcal P}_{d,g,n}\fatslash \mathbb G_m:=\o{\mathcal P}_{d,g,n}\fatslash\mathbb G_m$ be the rigidification of $\o{\mathcal P}_{d,g,n}$ under the action of $\mathbb G_m$. Then, if $(d-g+1,2g-2)=1$, $\o{\mathcal P}_{d,g,n}\fatslash \mathbb G_m$ is a Deligne-Mumford stack of dimension $4g-3+n$ with a proper and strongly representable morphism onto $\o{\mathcal M}_{g,n}$.
\end{prop}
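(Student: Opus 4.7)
The plan is to prove the proposition by induction on $n$, using the commutative diagram (\ref{induction2}) (interpreted with rigidified versions throughout) together with the universal family structure of Theorem \ref{main}. The base case $n=0$ is precisely Caporaso's result in \cite{capneron}: under the hypothesis $(d-g+1,2g-2)=1$, the GIT quotient $[H_d/PGL(r+1)]$ is geometric with finite stabilizers, so $\o\P_{d,g,0}\fatslash\mathbb G_m$ is Deligne--Mumford of dimension $4g-3$ and $\Psi_{d,g,0}$ is proper and strongly representable over $\o{\mathcal M}_g$.

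For the inductive step, assume the statement for $n-1$. The key observation is that the $\mathbb G_m$-action on $\o\P_{d,g,n}$ scales line bundles and fixes the underlying pointed curve, so it commutes with the contraction functor of Section \ref{const}. Hence $\Phi_{d,g,n}$ descends to a morphism $\o\P_{d,g,n}\fatslash\mathbb G_m\to\o\P_{d,g,n-1}\fatslash\mathbb G_m$ which presents the source as the universal family of pointed quasistable curves over the target, and is therefore proper and representable. Likewise, $\Pi_{g,n}$ is the proper and representable universal family of $n$-pointed stable curves. Using the equality $\Pi_{g,n}\circ\Psi_{d,g,n}=\Psi_{d,g,n-1}\circ\Phi_{d,g,n}$, the inductive hypothesis renders the right-hand composite proper and strongly representable. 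Since $\Pi_{g,n}$ is separated, the standard cancellation principle (if $h\circ f$ is proper, respectively representable, and $h$ is separated, then so is $f$) forces $\Psi_{d,g,n}$ to be proper and strongly representable as well.

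Strong representability of $\Psi_{d,g,n}$ then implies that $\o\P_{d,g,n}\fatslash\mathbb G_m$ is Deligne--Mumford, since stabilizer groups inject into those of $\o{\mathcal M}_{g,n}$, which are finite and reduced. The dimension is computed as $\dim\o{\mathcal M}_{g,n}+g=(3g-3+n)+g=4g-3+n$, with relative dimension $g$ coming from the fact that the generic fiber of $\Psi_{d,g,n}$ over a smooth $n$-pointed stable curve is the Picard variety of that curve.

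The main obstacle I anticipate is the clean identification of the rigidification with the universal family structure---namely, showing that the rigidified $\Phi_{d,g,n}$ is genuinely a universal family of pointed quasistable curves with the expected $n+1$ sections. This amounts to verifying that the $\mathbb G_m$-action is compatible with each piece of data defining $\mathcal Z_{d,g,n-1}$ in Section \ref{const} (the structural morphism, the $n$ sections, the balanced line bundle $M$, the extra section $\Delta$, and the universal sheaf $\mathcal L=\Delta^*M$). Since the action is central and trivial on the curve and sections, this is a routine but careful application of the universal property of rigidification; all other steps then follow formally from Theorem \ref{main} and the inductive hypothesis.
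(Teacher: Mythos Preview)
Your proposal is correct and follows the same inductive strategy as the paper. The paper's own argument is essentially the short paragraph preceding the proposition: it invokes diagram~(\ref{induction2}), notes that the argument used earlier to propagate universal closedness of $\Psi_{d,g,n}$ from $n-1$ to $n$ applies verbatim here, and cites Caporaso's result for the base case $n=0$; your write-up simply makes this sketch precise by spelling out the cancellation principle and the compatibility of rigidification with the universal family structure.
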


\section{Acknowledgements}
This paper is part of my Ph.D. thesis. I am very grateful to my advisor Professor Lucia Caporaso for the patience with which she followed this work very closely and for always transmitting me lots of encouragement and mathematical enthusiasm.
I am also grateful to
Simone Busonero and Filippo Viviani for giving me important suggestions.

\addcontentsline{toc}{chapter}{References}

\bigskip

\noindent Margarida Melo\\ Dipartimento di Matematica, Universit\`a di Roma Tre - \\
Departamento de Matem\'atica, Universidade de Coimbra.\\
E-mail: \textsl{melo@mat.uniroma3.it}, \textsl {mmelo@mat.uc.pt}.

\end{document}